\theoremstyle{definition}
\newtheorem*{theorem*}{Theorem}
\newtheorem{theorem}{Theorem}[section]
\newtheorem{lemma}[theorem]{Lemma}
\newtheorem{proposition}[theorem]{Proposition}
\newtheorem{conj}[theorem]{Conjecture}
\theoremstyle{definition}
\newtheorem{definition}[theorem]{Definition}
\newtheorem{remark}[theorem]{Remark}
\newtheorem{example}[theorem]{Example}
\begin{document}
\title{Stability conditions for Gelfand-Kirillov subquotients of category $\mathcal{O}$} 
\author{Vinoth Nandakumar}
\address{Dept. of Mathematics, University of Utah}
\email{vinoth.90@gmail.com}
\maketitle
\begin{center}\textit{In memory of Andrei Tarkovsky} \end{center}
\setcounter{tocdepth}{1}
\begin{abstract} Recently, Anno, Bezrukavnikov and Mirkovic have introduced the notion of a “real variation of stability conditions” (which is related to Bridgeland's stability conditions), and construct an example using categories of coherent sheaves on Springer fibers. Here we construct another example, by studying certain sub-quotients of category $\mathcal{O}$ with a fixed Gelfand-Kirillov dimension.  We use the braid group action on the derived category of category $\mathcal{O}$, and certain leading coefficient polynomials coming from translation functors. Consequently, we use this to explicitly describe a sub-manifold in the space of Bridgeland stability conditions on these sub-quotient categories, which is a covering space of a hyperplane complement in the dual Cartan. \end{abstract}
\tableofcontents

\section{Introduction}

In this paper, we will study stability conditions (and real variations thereof) for certain sub-quotients of category $\mathcal{O}$ for a semi-simple Lie algebra $\mathfrak{g}$. Building upon Bridgeland's work on stability conditions, in \cite{abm}, Anno, Bezrukavnikov and Mirkovic introduce the concept of a `real variations of stability conditions', give an example of representation theoretic significance (involving categories of coherent sheaves on Springer fibers), and study the space of stability conditions on these categories. Here, following their approach, we will give another example of a similar nature, by looking at Gelfand-Kirillov sub-quotients of category $\mathcal{O}$, and describe an explicit sub-manifold in the space of Bridgeland stability conditions for this category (which is homeomorphic to a covering space of a certain hyperplane complement in $\mathfrak{h}^*$). The main ingredients in our construction are certain leading coefficient polynomials defined for modules in category $\mathcal{O}$, and the braid group action on the derived categories of these sub-quotients. 

In \cite{bridgeland}, under certain technical assumptions, Bridgeland associates to each triangulated category $\mathcal{D}$ a complex manifold $\text{Stab}(\mathcal{D})$ parametrizing stability conditions on $\mathcal{D}$; further, the group $\text{Aut}(\mathcal{D})$ of exact auto-equivalences acts on $\text{Stab}(\mathcal{D})$ and preserves a certain natural distance function. Thus this gives a new topological invariant of triangulated categories; Bridgeland gives an explicit description of this manifold was given in \cite{bridgeland2} for categories of coherent sheaves on K3 surfaces, and in \cite{bridgeland3} for categories of coherent sheaves on resolutions of Kleinian singularities. The framework developed by Anno, Bezrukavnikov and Mirkovic in \cite{abm} is particularly amenable to the study of stability conditions on categories that appear in representation theory; in \cite{zhao}, Zhao uses these techniques to give another such example involving representation categories of rational Cherednik algebras in positive characteristic. In contrast to the constructions in \cite{abm} and \cite{zhao}, we will not use much geometry, or appeal to any localization-type results (e.g. Beilinson-Bernstein).

While the main results in \cite{abm} are phrased using categories of coherent sheaves on Springer fibers, they can be re-phrased using blocks of representations of Lie algebras in positive characteristic (using the equivalences in \cite{bmr}). Our main result may be viewed, loosely, as a characteristic $0$ analogue of Proposition $1$ and Theorem $1$ from \cite{abm}. The main difference is that, in their set-up, one may define the central charge map by simply applying a translation functor, and looking at the dimension of resulting module; in this case, however, we must use certain ``leading coefficients'' which are only defined on these Gelfand-Kirillov sub-quotients. Checking that our data gives a valid example of a real variation of stability conditions amounts to checking a number of compatibilities between the braid group action on these Gelfand-Kirillov questions, and these leading coefficient polynomials. 

The Gelfand-Kirillov filtration on category $\mathcal{O}$ is a source of rich connections with other areas of representation theory, including primitive ideals in the universal enveloping algebra, two-sided Kazhdan-Lusztig cells in the Weyl group, and nilpotent orbits. Given an irreducible representation $L_w$, (with highest weight $w \cdot 0$) in category $\mathcal{O}$, by taking its associated graded one obtains a module over $S(\mathfrak{g})$, whose support $\text{supp}(L_w)$ lies inside $\mathfrak{n}^+$; further, one can determine the Gelfand-Kirillov dimension of $L_w$ from the dimension of its support $\text{supp}(L_w)$. It is known that the $G$-saturation of $\text{supp}(L_w)$ is a nilpotent orbit, and two irreducibles $L_w$ and $L_y$ give rise to the same orbit iff they lie inside the same two-sided Kazhdan-Lusztig cell (this was proven by Barbasch, Joseph, Vogan et al; see \cite{bv}, \cite{vogan}, \cite{jos}, \cite{jos2} for more results in this direction). Thus we hope that the results which we prove about these Gelfand-Kirillov sub-quotients will also be of interest to those who study these aspects of representation theory.  

Let us now briefly summarize the contents of this paper in more detail. 

\subsection{Real variations of stability conditions} Inspired by Bridgeland's theory of stability conditions, in Section $1.4$ of \cite{abm}, Anno,  Bezrukavnikov and Mirkovic define the notion of a ``real variation of stability conditions" on a triangulated category $\mathcal{C}$. It consists of the following: a discrete collection of hyperplanes $\Sigma$ in an $\mathbb{R}$-vector space $V$, a polynomial map $Z: V \rightarrow (K^0(\mathcal{C}) \otimes \mathbb{R})^*$ (known as the ``central charge''), and a collection of t-structures on $\mathcal{C}$ indexed by connected components of $V \backslash \Sigma$ (known as ``alcoves''), satisfying some compatibilities (see Section $2.4$ below for more details).  

They then construct an example with $\mathcal{C} = D^b(\text{Coh}_{\mathcal{B}_e}(\widetilde{S}_e))$; here $e$ is a nilpotent, $\mathcal{B}_e$ is the corresponding Springer fiber, and $\widetilde{S}_e$ is the pre-image to the Slodowy slice at $e$ under the Springer resolution map. The hyperplane arrangement $\Sigma$ in question is the set of affine co-root hyperplanes in $V = \mathfrak{h}^*_{\mathbb{R}}$, and the central charge $Z: \mathfrak{h}^* \rightarrow (K^0(\mathcal{C}) \otimes \mathbb{R})^*$ is defined by the property that $Z(\lambda)[F]$ is the Euler characteristic of $\mathcal{F} \otimes \mathcal{O}(\lambda)$ given $\mathcal{F} \in \mathcal{C}$, $\lambda \in \Lambda$. The map $\tau$ from $\textbf{Alc}$ to the set of bounded $t$-structures on $D^b(\text{Coh}_{\mathcal{B}_e}(\widetilde{S}))$ constructed using the theory of ``exotic t-structures'', which were developed in greater generality by Bezrukavnikov and Mirkovic in \cite{bm} (see, for instance, Section $1.8.2$). As an application, in Section $4.2$, they explicitly describe a sub-manifold in the space of Bridgeland stability conditions for the category $\mathcal{C}$, which is homeomorphic to a covering space of a certain hyperplane complement in the Cartan. 

\subsection{Gelfand-Kirillov filtration, leading coefficient polynomials, and the braid group action} $\\$ 
So let $\mathfrak{g}$ be a semisimple Lie algebra over $\mathbb{C}$, with a fixed triangular decomposition $\mathfrak{g} = \mathfrak{n}^+ \oplus \mathfrak{h} \oplus \mathfrak{n}^-$. Then the BGG category $\mathcal{O}$ is the category of all finitely generated $U(\mathfrak{g})$-modules that are $\mathfrak{h}$-diagonalizable and locally $U(\mathfrak{n}_+)$-nilpotent. This category splits into blocks in accordance with the action of the center $Z(U(\mathfrak{g}))$ of the enveloping algebra; we will be interested primarily in the principal block $\mathcal{O}_0$. 

In Section $2.1$, we start by defining two quantities that one can associate two quantities to a given a module $M \in \mathcal{O}$: its Gelfand-Kirillov dimension $\text{GK}(M)$, the degree of a certain polynomial $p_M$, and $\underline{\text{LC}}(M)$, its leading coefficient. The polynomial $p_M$ is, roughly speaking, defined by the equality $p_M(i) = \text{dim}(M_i)$, where $M_i = (U(\mathfrak{g})^{\leq i}) v$; here $v$ is any vector generating $M$, and $\{ U(\mathfrak{g})^{\leq i} \}$ is the PBW filtration on $U\mathfrak{g}$. It will be more convenient for us to use the quantity $\text{LC}(M)$, which is a variant of $\underline{\text{LC}}(M)$ that is defined using a different filtration. Instead of working with $\mathcal{O}_0$ itself, we will be working the ``Gelfand-Kirillov sub-quotients'' $\mathcal{O}_{0}^d$, which consist of modules with GK-dimension $d$, modulo those with smaller dimension. 

In Section $2.2$, we will recall how the braid group $\mathbb{B}_W$ acts on the derived category $D^b(\mathcal{O}_0)$ (here the simple reflections act via wall-crossing functors). We also recall why the braid group action factors through to the quotient category $\mathcal{C} = D^b(\mathcal{O}_0^d)$. For each $w \in W$, one can look at a lift $\tilde{w} \in \mathbb{B}_W$; denote by $\tau(w)$ the image of the tautological t-structure on $D^b(\mathcal{O}_0^d)$ by the automorphism corresponding to $\tilde{w}$. These t-structures will be important to us.   

\subsection{The main construction.} $\\$ Now that the key ingredients are in place (ie. leading coefficient polynomials, Gelfand-Kirillov sub-quotients and the braid group action), we will use them to construct stability conditions. We summarize the main results below; see Section \ref{mainres} (and Section \ref{bridge}) for more details about the first (and second) parts respectively. We will deduce the second part from the first using techniques developed in \cite{abm}. 

\textbf{Main Theorem:} \begin{itemize} \item Let the hyperplane arrangement $\Sigma$ will be the set of linear co-root hyperplanes in $V =  \mathfrak{h}^*_{\mathbb{R}}$. Given $M \in \mathcal{O}^{\textbf{d}}_0$, the central charge map $Z: \mathfrak{h}_{\mathbb{R}}^* \rightarrow (K^0(\mathcal{C}) \otimes \mathbb{R})^*$ is defined by uniquely determined by the property that $Z(\lambda)[M] = \text{LC}(T_{0 \rightarrow \lambda} M)$ for $\lambda \in \Lambda^+$ (here $T_{0 \rightarrow \lambda}$ denotes the translation functor). The set of alcoves is in bijection with $W$; for the alcove indexed by $w \in W$, we associate to it the t-structure $\tau(w)$. Then this datum constitutes a real variation of stability conditions. \item $\widetilde{V_{\text{reg}}}$ is defined to be a covering space of a certain hyperplane complement in $\mathfrak{h}^*_{\mathbb{C}}$, and $\pi: \widetilde{V_{\text{reg}}} \rightarrow \mathfrak{h}^*_{\mathbb{C}}$ the projection map. The map $\textbf{Z}: \mathfrak{h}^*_{\mathbb{C}} \rightarrow (K^0(\mathcal{C}) \otimes \mathbb{C})^*$ is defined to be a complexification of the map $Z$. Then there exists a unique map (of manifolds) $\imath$, from $\widetilde{V_{\text{reg}}}$ to the space $\text{Stab}(\mathcal{C})$ of locally finite Bridgeland stability conditions on $\mathcal{C}$, such that: \begin{enumerate} \item We have the commuting square $$\overline{\pi} \circ \imath= \sqrt{-1} \textbf{Z} \circ \pi$$ where $\overline{\pi}: \text{Stab}(\mathcal{C}) \rightarrow K^0(\mathcal{C})^*$ is the natural projection map. \item For a point $z$ lying in the alcove corresponding to $w \in W$, the underlying t-structure of the stability condition $\imath(z)$ is $\tau(\underline{w})$. Further, the map $\imath$ is compatible with the action of $\mathbb{B}_W$, which acts on the source by deck transformations, and on the target via the action on $\mathcal{C}$. \end{enumerate} \end{itemize} 

\subsection{Proof of main results:} Before embarking on the proof, in Section $3$ we prove some results about the leading coefficient functions which we will need. Given $M \in \mathcal{O}_0^d$, the central charge map above is defined by stipulating that $Z(\lambda)[M] = LC(T_{0 \rightarrow \lambda} M)$ when $\lambda$ is dominant; here we will show that this map is well-defined (ie. that such a polynomial exists). In order to do this, we give a way of extracting the leading coefficient $\text{LC}(M)$ from the character of $M$ (or more precisely, a certain Taylor expansion of the character). As a result, we are also able to show a certain compatibility relation between the braid group action and these leading coefficient polynomials. 

In Section \ref{3}, first we prove the first part of Theorem 1: that the two axioms defining a real variation of stability conditions are satisfied. The first condition almost follows from the results of Section \ref{2}; however, we need some additional machinery to show that $\langle Z(x), [M] \rangle$ is strictly positive (and not just non-negative). To show the second condition, we examine how translation to the wall interacts with these leading coefficient polynomials; we appeal to the theory of harmonic polynomials to show the filtrations appearing in the second axiom (relating to zeros that these functions have on co-root hyperplanes) are two-step filtrations. The second part of the theorem is proven in Section \ref{bridge}, using the techniques developed in Section $4.2$ of \cite{abm}. 

We close by describing some avenues for further work, and some open questions.

\subsection{Acknowledgements} I would like to thank my advisor, Roman Bezrukavnikov, for suggesting this problem, and for numerous helpful discussions. I am also much indebted to David Vogan for some crucial insights relating to Proposition \ref{taylor}; to Gufang Zhao and David Yang for some helpful conversations; and to Catharina Stroppel for helpful comments. 

\section{Statement of main results.} \label{2}

\subsection{Gelfand-Kirillov dimension and leading coefficients.} \label{2.1}

Let $\mathcal{O}_0$ be the principal block of category $\mathcal{O}$. Given a module $M \in \mathcal{O}$, recall that its Gelfand-Kirillov dimension is defined as follows. Consider the PBW filtration on $U(\mathfrak{g})$, where $U(\mathfrak{g})^{\leq i}$ denotes the subspace of $U(\mathfrak{g})$ spanned by products $x_1 x_2 \cdots x_k$ with $k \leq i$ and $x_j \in \mathfrak{g}$. 

Let $\underline{M}_0 \subseteq M$ be a vector sub-space which generates $M$ as a $U(\mathfrak{g})$-module, and let $\underline{M}_i = U(\mathfrak{g})^{\leq i} \cdot \underline{M}_0$. Let us start be collecting some well-known facts (Proposition \ref{a1}, Lemma \ref{a2} and Lemma \ref{gkdim}) about this filtration. While these results certainly aren't new, we include proofs for the reader's convenience; see Mazorchuk and Stroppel's paper \cite{ms} for a more detailed exposition. We expect that some of the other results in this sub-section may also be known to experts, but we were unable to find them in the literature. 

\begin{proposition} \label{a1} There exists a polynomial $p$ such that: for all $i$ sufficiently large, $\text{dim}(\underline{M}_i) = p(i)$. The leading term of this polynomial $p$ does not depend on the choice of subspace $M_0$. \end{proposition}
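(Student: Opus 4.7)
The plan is to use the standard associated-graded/Hilbert polynomial argument, together with a sandwich comparison for the independence of the leading term.

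First I would pass to the associated graded module. By the PBW theorem, $\operatorname{gr}(U(\mathfrak{g})) = S(\mathfrak{g})$, a polynomial ring in $\dim\mathfrak{g}$ variables. The filtration $\underline{M}_\bullet$ is compatible with the PBW filtration, so $\operatorname{gr}(M) := \bigoplus_i \underline{M}_i/\underline{M}_{i-1}$ is a finitely generated graded $S(\mathfrak{g})$-module (generated in degree $0$ by $\underline{M}_0$, which we may take finite-dimensional since $M$ is finitely generated). By Hilbert's theorem on Hilbert functions of finitely generated graded modules over a finitely generated commutative graded algebra, there is a polynomial $q$ with $\dim(\underline{M}_i/\underline{M}_{i-1}) = q(i)$ for $i \gg 0$. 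Summing, $\dim(\underline{M}_i) = \sum_{j\le i} \dim(\underline{M}_j/\underline{M}_{j-1})$ agrees with a polynomial $p(i)$ for $i$ sufficiently large; this gives the first assertion.

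Next I would show independence of the leading term of $p$ from the choice of generating subspace. Let $\underline{M}_0$ and $\underline{M}_0'$ be two finite-dimensional generating subspaces, with associated filtrations $\underline{M}_\bullet$ and $\underline{M}_\bullet'$ and eventually-polynomial dimension functions $p$ and $p'$ respectively. Since $M$ is generated by $\underline{M}_0$, each element of the finite-dimensional space $\underline{M}_0'$ lies in $U(\mathfrak{g})^{\le N} \cdot \underline{M}_0$ for some $N$; symmetrically, $\underline{M}_0 \subseteq U(\mathfrak{g})^{\le N'} \cdot \underline{M}_0'$ for some $N'$. Applying $U(\mathfrak{g})^{\le i}$ to both sides and using $U(\mathfrak{g})^{\le i} \cdot U(\mathfrak{g})^{\le N} \subseteq U(\mathfrak{g})^{\le i+N}$ yields the sandwich
\[
\underline{M}_i' \;\subseteq\; \underline{M}_{i+N} \quad\text{and}\quad \underline{M}_i \;\subseteq\; \underline{M}_{i+N'}'
\]
for all $i$. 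Taking dimensions gives $p'(i) \le p(i+N)$ and $p(i) \le p'(i+N')$ for $i \gg 0$. These inequalities between polynomials force $p$ and $p'$ to have the same degree and the same leading coefficient (any discrepancy in the top term would be violated for large $i$).

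The one point that requires a moment of care is that $\underline{M}_0$ should be finite-dimensional for the Hilbert polynomial machinery to apply; I would note that this is automatic in category $\mathcal{O}$, since $M$ is finitely generated so one can always shrink $\underline{M}_0$ to the span of a finite generating set without changing the associated filtration past a fixed shift. The arguments for both the existence of $p$ and the independence of its leading term are then straightforward applications of well-known commutative-algebra facts; the only mild obstacle is the bookkeeping of the sandwich estimates, but these follow directly from the containment $U(\mathfrak{g})^{\le i} \cdot U(\mathfrak{g})^{\le N} \subseteq U(\mathfrak{g})^{\le i+N}$, which is immediate from the definition of the PBW filtration.
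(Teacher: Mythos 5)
Your proposal is correct and follows essentially the same route as the paper: pass to the associated graded module over $S(\mathfrak{g})$, invoke the theory of Hilbert polynomials for the existence of $p$, and then use the sandwich containments $\underline{M}_i' \subseteq \underline{M}_{i+N}$, $\underline{M}_i \subseteq \underline{M}_{i+N'}'$ to force $p$ and $p'$ to share the same leading term. The only cosmetic difference is that the paper works with the Hilbert series $\sum \dim(\underline{M}_i)\,t^i = P(t)/(1-t)^{\dim\mathfrak{g}}$ directly rather than with the Hilbert function of the graded pieces, and your remark about shrinking $\underline{M}_0$ to a finite-dimensional generating subspace is a worthwhile clarification that the paper leaves implicit.
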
 \begin{proof} With this grading, the associated graded of $U(\mathfrak{g})$ is $S(\mathfrak{g})$ (where all elements of $\mathfrak{g}$ have degree $1$); let $\underline{M} = \text{gr}(M)$. Then by the theory of Hilbert polynomials, there exists a polynomial $q$ such that: $$ \text{dim}(\underline{M}_i) t^i = \frac{P(t)}{(1-t)^{\text{dim}(\mathfrak{g})}} $$ It follows that there exists a polynomial $p$, such that $\text{dim}(\underline{M}_i) = p(i)$ for $i$ sufficiently large. Suppose now that we pick a different subspace $\underline{M}_0'$, which gives rise to a filtration $\underline{M}'_i$ with dimension polynomial $p'$. Then $\underline{M}_0' \subseteq \underline{M}_k$ for some $k$, so $\underline{M}_i' \subseteq \underline{M}_{i+k}$, and $p'(i) \leq p(i+k)$ for $i$ large. Similarly, for some $l$, $p(i) \leq p'(i + l)$, provided that $i$ large. These two inequalities imply that $p$ and $p'$ have the same leading term, i.e. that the leading term doesn't depend on the choice of subspace $\underline{M}_0$. \end{proof}

\begin{definition} The degree of the polynomial $p$, $\text{GK}(M)$, is known as the Gelfand-Kirillov dimension of $M$; denote the leading coefficient of $p$ by $\underline{\text{LC}}(M)$. \end{definition}

From the following self-evident Lemma, we deduce that the set of all $M \in \mathcal{O}_0$ with Gelfand-Kirillov dimension at most $d$ (for some $d \in \mathbb{Z}_{\geq 0}$) forms a Serre sub-category.

\begin{lemma} \label{a2} Given an exact sequence $0 \rightarrow A \rightarrow B \rightarrow C \rightarrow 0$, then $$\text{max} \{ \text{GK}(A), \text{GK}(C) \} = \text{GK}(B)$$ \end{lemma}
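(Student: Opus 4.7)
The plan is to choose a single generating subspace of $B$ that is compatible with the short exact sequence, deduce an exact sequence of filtrations, and then analyze each term using Hilbert polynomial methods.

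First, I would pick generating finite-dimensional subspaces $\underline{A}_0 \subseteq A$ and $\underline{C}_0 \subseteq C$, lift $\underline{C}_0$ to a finite-dimensional $\widetilde{\underline{C}}_0 \subseteq B$, and set $\underline{B}_0 = \underline{A}_0 + \widetilde{\underline{C}}_0$. This is possible because category $\mathcal{O}$ is Noetherian, so $A$ and $C$ are both finitely generated. Then $\underline{B}_0$ generates $B$, and its image in $C$ is (a subspace containing) $\underline{C}_0$, hence generates $C$. Setting $\underline{B}_i = U(\mathfrak{g})^{\leq i} \underline{B}_0$ and $\underline{C}_i = U(\mathfrak{g})^{\leq i} \underline{C}_0$, the image of $\underline{B}_i$ in $C$ equals $\underline{C}_i$; letting $\underline{A}_i := \underline{B}_i \cap A$, I get the short exact sequence of vector spaces $0 \to \underline{A}_i \to \underline{B}_i \to \underline{C}_i \to 0$, whence
$$\dim \underline{B}_i \;=\; \dim \underline{A}_i \;+\; \dim \underline{C}_i.$$

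By Proposition~\ref{a1}, $\dim \underline{B}_i$ and $\dim \underline{C}_i$ are eventually polynomials of degrees $\text{GK}(B)$ and $\text{GK}(C)$ respectively. The main technical point is to show that $\dim \underline{A}_i$ is also eventually polynomial, of degree exactly $\text{GK}(A)$, even though the filtration $\underline{A}_i$ need not arise as $U(\mathfrak{g})^{\leq i}$ acting on $\underline{A}_0$. For this I would pass to associated gradeds: $\mathrm{gr}(A) := \bigoplus_i \underline{A}_i/\underline{A}_{i-1}$ is a graded submodule of $\mathrm{gr}(B)$, which is finitely generated over $S(\mathfrak{g}) = \mathrm{gr}\,U(\mathfrak{g})$. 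Since $S(\mathfrak{g})$ is Noetherian, $\mathrm{gr}(A)$ is finitely generated, hence generated in degrees $\le N$ for some $N$. This implies $\underline{A}_i = U(\mathfrak{g})^{\leq i-N} \cdot \underline{A}_N$ for $i \ge N$ (by a direct induction on $i$ lifting the generation statement from the associated graded), so $\underline{A}_N$ already generates $A$ and the filtration $\{\underline{A}_i\}$ sits between the two genuine PBW-type filtrations built from $\underline{A}_0$ and from $\underline{A}_N$. A double-inclusion shift argument, essentially the one used in the proof of Proposition~\ref{a1} to show independence of the leading term, then forces $\dim \underline{A}_i$ to be a polynomial of degree $\text{GK}(A)$ for $i$ large.

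Finally, since the leading coefficients of $\dim \underline{A}_i$ and $\dim \underline{C}_i$ are both positive (they are dimensions), no cancellation occurs in the sum, and the identity $\dim \underline{B}_i = \dim \underline{A}_i + \dim \underline{C}_i$ yields $\deg p_B = \max(\deg p_A, \deg p_C)$, i.e.\ $\text{GK}(B) = \max\{\text{GK}(A),\text{GK}(C)\}$.

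The main obstacle is the middle step: the filtration $\underline{A}_i = \underline{B}_i \cap A$ is not a priori of the standard form, so one cannot directly apply Proposition~\ref{a1} to it, and the comparison between this ``induced good filtration'' and any genuine PBW filtration on $A$ requires the Noetherianity of $S(\mathfrak{g})$ plus the bootstrap argument $\underline{A}_i = U(\mathfrak{g})^{\le i-N}\underline{A}_N$.
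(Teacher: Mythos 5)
Your argument is correct and complete. Note that the paper itself offers no proof of Lemma~\ref{a2}: it is labelled ``self-evident'' and the reader is referred to Mazorchuk--Stroppel \cite{ms} for a detailed treatment, so there is no in-paper argument to compare yours against. What you supply is the standard ``good filtration'' proof from Gelfand--Kirillov dimension theory, and you correctly isolate the one genuinely non-trivial point: the induced filtration $\underline{A}_i = \underline{B}_i \cap A$ on the sub-object is not of the form $U(\mathfrak{g})^{\leq i}\underline{A}_0'$ a priori, and one must use Noetherianity of $S(\mathfrak{g})$ to see that $\mathrm{gr}(A) \subseteq \mathrm{gr}(B)$ is finitely generated, hence generated in degrees $\leq N$, which gives the bootstrap identity $\underline{A}_i = U(\mathfrak{g})^{\leq i-N}\underline{A}_N$ for $i \geq N$. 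At that point $\dim\underline{A}_i$ literally equals the dimension function of the PBW filtration generated by $\underline{A}_N$, shifted by $N$, so Proposition~\ref{a1} applies directly and even the sandwich argument is dispensable. The final step --- positivity of the leading coefficients of $\dim\underline{A}_i$ and $\dim\underline{C}_i$ rules out cancellation in $\dim\underline{B}_i = \dim\underline{A}_i + \dim\underline{C}_i$ --- is exactly right and is the reason one gets $\max$ rather than merely $\leq$. In short: a correct filling-in of a gap the paper chose to leave to the literature.
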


\begin{definition} Denote by $\mathcal{O}_0^{\leq d}$ (resp., $\mathcal{O}_0^{< d}$) be the Serre sub-category of $\mathcal{O}_0$ consisting of objects $M$ with Gelfand-Kirillov dimension at most $d$ (resp., strictly less than $d$). Let $\mathcal{O}_0^{d}$ denote the Serre quotient category $\mathcal{O}_0^{\leq d} / \mathcal{O}_0^{< d}$. \end{definition}

\begin{lemma} \label{gkdim} The Verma module $\Delta(\lambda) = U(\mathfrak{g}) \otimes_{U(\mathfrak{b})} \mathbb{C}_{\lambda}$ has Gelfand-Kirillov dimension $|\Delta^+|$. More generally, given a parabolic sub-algebra  $\mathfrak{p} \supseteq \mathfrak{b}$ and a finite-dimensional irreducible representation $V_{\lambda}$ of $\mathfrak{p}$ (which factors through to the Levi sub-algebra $\mathfrak{l}$), then the parabolic Verma module $\Delta_{\mathfrak{p}}(\lambda) = U(\mathfrak{g}) \otimes_{U(\mathfrak{p})} {V}_{\lambda}$ has Gelfand-Kirillov dimension $|\Delta^+| - N(\mathfrak{p})$, where $N(\mathfrak{p}) = |\alpha \in \Delta^+: F_{\alpha} \in \mathfrak{p}|$. 
\end{lemma}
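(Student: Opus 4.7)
The plan is to compute the Gelfand-Kirillov dimension directly from the definition, by making a judicious choice of generating subspace $\underline{M}_0$ and exploiting the Poincar\'e-Birkhoff-Witt decomposition. For the ordinary Verma module $\Delta(\lambda)$, I would take $\underline{M}_0 = \mathbb{C} \cdot v_\lambda$, the span of the highest weight vector, which generates $\Delta(\lambda)$ as a $U(\mathfrak{g})$-module. The filtration pieces are then $\underline{M}_i = U(\mathfrak{g})^{\leq i} \cdot v_\lambda$.

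The key step is to identify $\underline{M}_i$ with $U(\mathfrak{n}^-)^{\leq i} \cdot v_\lambda$. Fix a PBW ordering of the generators in which the root vectors $F_\alpha$ for $\alpha \in \Delta^+$ come first, followed by a basis of $\mathfrak{h}$, followed by the $E_\alpha$'s. Any element of $U(\mathfrak{g})^{\leq i}$ is a linear combination of PBW monomials of total degree at most $i$. When such a monomial acts on $v_\lambda$, the trailing $E_\alpha$ factors kill it unless none appear, and the $\mathfrak{h}$-factors act by scalars; what survives is exactly the action of $U(\mathfrak{n}^-)^{\leq i}$ on $v_\lambda$. Since $\Delta(\lambda) \cong U(\mathfrak{n}^-) \otimes_{\mathbb{C}} \mathbb{C}_\lambda$ as a vector space and the action of $U(\mathfrak{n}^-)$ is free on $v_\lambda$, we get
\[
\dim \underline{M}_i \;=\; \dim U(\mathfrak{n}^-)^{\leq i} \;=\; \binom{i + |\Delta^+|}{|\Delta^+|},
\]
a polynomial of degree $|\Delta^+|$ in $i$, giving $\mathrm{GK}(\Delta(\lambda)) = |\Delta^+|$.

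For the parabolic Verma module $\Delta_{\mathfrak{p}}(\lambda) = U(\mathfrak{g}) \otimes_{U(\mathfrak{p})} V_\lambda$, I would take $\underline{M}_0 = 1 \otimes V_\lambda$; this is finite-dimensional and generates $\Delta_\mathfrak{p}(\lambda)$. Let $\mathfrak{n}^-_\mathfrak{p}$ denote the span of those $F_\alpha$ with $F_\alpha \notin \mathfrak{p}$, so $\dim \mathfrak{n}^-_\mathfrak{p} = |\Delta^+| - N(\mathfrak{p})$. Using a PBW ordering with these $F_\alpha$'s first and the generators of $\mathfrak{p}$ last, the same argument as above, combined with the fact that $U(\mathfrak{p})$ preserves $V_\lambda$, yields
\[
\underline{M}_i \;=\; U(\mathfrak{g})^{\leq i} \cdot V_\lambda \;=\; U(\mathfrak{n}^-_\mathfrak{p})^{\leq i} \cdot V_\lambda.
\]
By PBW, $\Delta_\mathfrak{p}(\lambda) \cong U(\mathfrak{n}^-_\mathfrak{p}) \otimes_{\mathbb{C}} V_\lambda$ as a vector space, so this action is free, and
\[
\dim \underline{M}_i \;=\; \dim V_\lambda \cdot \binom{i + |\Delta^+| - N(\mathfrak{p})}{|\Delta^+| - N(\mathfrak{p})},
\]
which is a polynomial in $i$ of degree $|\Delta^+| - N(\mathfrak{p})$, as desired.

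There is no real obstacle here; the only step requiring mild care is the reduction $U(\mathfrak{g})^{\leq i} \cdot V_\lambda = U(\mathfrak{n}^-_\mathfrak{p})^{\leq i} \cdot V_\lambda$, which could instead be proven by an induction on $i$ using the fact that commutators in $U(\mathfrak{g})$ strictly decrease PBW degree, so that any element of $U(\mathfrak{g})^{\leq i}$ can be rewritten modulo $U(\mathfrak{g})^{\leq i-1}$ as a sum of ordered monomials with $\mathfrak{n}^-_\mathfrak{p}$-factors on the left and $\mathfrak{p}$-factors on the right. The leading coefficient $\underline{\mathrm{LC}}(\Delta_\mathfrak{p}(\lambda))$ would then be $\dim V_\lambda / (|\Delta^+| - N(\mathfrak{p}))!$, a fact which may be useful elsewhere but is not needed for the present statement.
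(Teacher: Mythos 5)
Your proof is correct and follows essentially the same route as the paper: choose $\underline{M}_0 = V_\lambda$ (or $\mathbb{C}v_\lambda$), use the PBW decomposition to identify $\underline{M}_i$ with $U(\mathfrak{n}^-_{\mathfrak{p}})^{\leq i}\cdot V_\lambda$, and count monomials of degree at most $i$. You are in fact slightly more careful than the paper about the reverse inclusion $U(\mathfrak{g})^{\leq i}\cdot V_\lambda \subseteq U(\mathfrak{n}^-_{\mathfrak{p}})^{\leq i}\cdot V_\lambda$, which the paper's argument leaves implicit.
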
 \begin{proof} Pick a basis $\{ v_1, \cdots, v_k \}$ for $V_{\lambda}$, where $k = \text{dim}(V_{\lambda})$. Using the PBW theorem, the parabolic Verma has basis $$\prod_{\alpha \in \Delta^+_{\mathfrak{p}}, 1 \leq i \leq k} F_{\alpha}^{n_{\alpha}} v_i $$ Here we have fixed an order on the set $\Delta^+_{\mathfrak{p}} = \{ \alpha \in \Delta^+: F_{\alpha} \in \mathfrak{p} \}$; and the $n_{\alpha}$-s are arbitrary positive integers. Picking $\underline{M}_0 = V_{\lambda}$, the above product lies in $\underline{M}_n$, where $n = \sum_{\alpha \in \Delta^+} n_{\alpha}$. Thus: $$ \text{dim }(\underline{M}_n) = \text{dim }(V_{\lambda}) |  \{ n_{\alpha} \} : \sum_{\alpha \in \Delta^+_{\mathfrak{p}}} n_{\alpha} \leq n | $$ The result now follows.  \end{proof}

\begin{example} \label{ex1} Let us consider the example with $\mathfrak{g} = \mathfrak{sl}_3$, and calculate the Gelfand-Kirillov dimensions of the simple objects. 

The simple objects in $\mathcal{O}_0$ are $L(w \cdot 0)$ with $w \in S_3$. When $w = 1$, $L(w \cdot 0)$ is the trivial $1$-dimensional module, and clearly has Gelfand-Kirillov dimension $0$. When $w = w_0 = s_1 s_2 s_1$, $L(w \cdot 0) = \Delta(w \cdot 0)$ since the Verma module is irreducible; and has Gelfand-Kirillov dimension $3$ using the above Lemma.  

Let $\mathfrak{p}_1$ (resp. $\mathfrak{p}_2$) be the parabolic sub-algebra containing $F_{\alpha}$, where $\alpha = \epsilon_1 - \epsilon_2$ (resp. where $\alpha = \epsilon_2 - \epsilon_3$). Consider the corresponding parabolic sub-categories $\mathcal{O}^{\mathfrak{p}_1}_0, \mathcal{O}^{\mathfrak{p}_2}_0 \subset \mathcal{O}_0$. From Chapter $9$ of \cite{h}, we know that $L(w \cdot 0) \in \mathcal{O}^{\mathfrak{p}_I}_0$ (where $\mathfrak{p}_I$ is the parabolic subalgebra corresponding to a subset $I$ of the set of simple roots) precisely when $\langle w \cdot 0, \check{\alpha} \rangle > 0$ for all $\alpha \in I$. Thus $L( (s_2 s_1) \cdot 0 ), L(s_2 \cdot 0) \in \mathcal{O}^{\mathfrak{p}_1}_0$, and $L( (s_1 s_2) \cdot 0 ), L(s_1 \cdot 0) \in \mathcal{O}^{\mathfrak{p}_2}_0$.

 In fact, one can prove that the parabolic Verma modules $\Delta_{\mathfrak{p}_1}((s_2 s_1) \cdot 0)$ and $\Delta_{\mathfrak{p}_2}((s_1 s_2) \cdot 0)$ are irreducible (either directly, or by using the criterion in Section $9.12$ of \cite{h}); thus $L((s_2 s_1) \cdot 0) = \Delta_{\mathfrak{p}_1}((s_2 s_1) \cdot 0)$ and $L((s_1 s_2) \cdot 0) = \Delta_{\mathfrak{p}_2}((s_1 s_2) \cdot 0)$. Further, one can prove that we have exact sequences: \begin{align*} 0 \rightarrow \Delta_{\mathfrak{p}_1}((s_2 s_1) \cdot 0) &\rightarrow \Delta_{\mathfrak{p}_1}(s_2 \cdot 0) \rightarrow L(s_2 \cdot 0) \rightarrow 0 \\ 0 \rightarrow \Delta_{\mathfrak{p}_2}((s_1 s_2) \cdot 0) &\rightarrow \Delta_{\mathfrak{p}_2}(s_1 \cdot 0) \rightarrow L(s_1 \cdot 0) \rightarrow 0 \end{align*} Using the above Lemma, it is now easy to check that the four simples $L((s_1 s_2) \cdot 0), L((s_2 s_1) \cdot 0)$, $L(s_1 \cdot 0), L(s_1 \cdot 0)$ have Gelfand-Kirillov dimension $2$. $\blacksquare$ \end{example}

For our purposes it will be more convenient to modify the definition of leading coefficients. To this end, note that $M = U(\mathfrak{n}_{-}) M_0$, and define a different grading on $U(\mathfrak{n}_{-})$ by setting $\text{deg}(F_{\alpha}) = \langle \check{\rho}, \alpha \rangle$ (so, in particular, $\text{deg}(F_{\alpha})=1$ when $\alpha$ is a simple root). This gives a filtration on $U(\mathfrak{n}_{-})$, where $$ U(\mathfrak{n}_{-})^{\leq i} = \{ \text{span} (\prod_{\alpha \in \Delta^+} F_{\alpha}^{n_{\alpha}}) \; | \; \sum_{\alpha \in \Delta^+} n_{\alpha} \text{deg}(\alpha) \leq i \} $$
Define $M_i = U(\mathfrak{n}_{-})^{\leq i} \cdot M_0$. We will see in the next example that $\text{dim }(M_i)$ is no longer a polynomial in $i$; however, we will prove that the weaker statement below does hold.

\begin{definition} We say that a function $q: \mathbb{Z} \rightarrow \mathbb{Z}$ is ``quasi-polynomial", if there exists an integer $k$, and polynomials $q_0, q_1, \cdots, q_{k-1}$ with the same degree and leading coefficient, such that $q(n) = q_i(n)$ if $n \equiv i \; (\text{mod} \; k)$. \end{definition}

\begin{proposition} \label{quasipoly} There exists a quasi-polynomial function $q$, such that for $i$ sufficiently large, $$\text{dim} (M_i/M_{i-1}) = q(i)$$ Further, $\text{deg}(q) = \text{GK}(M)$, and the leading coefficient of $q$ does not depend on the choice of $M_0$. \end{proposition}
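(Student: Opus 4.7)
The plan is to compute the Hilbert series $H_M(t) := \sum_i \dim(M_i / M_{i-1})\, t^i$, show it admits the rational form $P(t) \big/ \prod_{\alpha \in \Delta^+}(1 - t^{\deg \alpha})$ for some Laurent polynomial $P$, and then extract quasi-polynomial behavior via partial fraction expansion.

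First, observe that the weighted filtration on $U(\mathfrak{n}_{-})$ is induced from an honest $\mathbb{Z}_{\geq 0}$-grading: since $[F_\alpha, F_\beta]$ is a scalar multiple of $F_{\alpha + \beta}$ when $\alpha + \beta \in \Delta^+$, the assignment $\deg F_\alpha = \langle \check{\rho}, \alpha \rangle$ is compatible with the Lie bracket, so $U(\mathfrak{n}_{-})^{\leq i}$ equals the sum of weight spaces $(U(\mathfrak{n}_{-}))_{-\nu}$ over all $\nu$ (a non-negative integral combination of positive roots) with $\langle \check{\rho}, \nu \rangle \leq i$. Choose $M_0$ to be $\mathfrak{h}$-stable (possible since $M \in \mathcal{O}$ is finitely generated and $\mathfrak{h}$-semisimple) and present $M = F / K$, where $F = U(\mathfrak{n}_{-}) \otimes_{\mathbb{C}} M_0$ is the free $U(\mathfrak{n}_{-})$-module graded by placing $M_0$ in degree zero. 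Equipping $K$ with the induced filtration $K_i := K \cap F_i$, the associated graded sequence $0 \to \text{gr}(K) \to F \to \text{gr}(M) \to 0$ is exact in each degree, so $H_M(t) = H_F(t) - H_K(t)$ with $H_F(t) = \dim(M_0) \prod_\alpha (1 - t^{\deg \alpha})^{-1}$.

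To pin down the rational form of $H_M(t)$, I would appeal to character theory: in the Grothendieck group of $\mathcal{O}_0$ one has $[M] = \sum_w c_w [\Delta(w \cdot 0)]$ for integers $c_w$, and a direct calculation for a Verma with $M_0 = \mathbb{C} v_\lambda$ yields $H_{\Delta(\lambda)}(t) = \prod_\alpha (1 - t^{\deg \alpha})^{-1}$; assembling, $H_M(t) = P(t) \big/ \prod_\alpha (1 - t^{\deg \alpha})$ for some Laurent polynomial $P$. Partial fraction expansion over the roots of unity then shows $\dim(M_i / M_{i-1})$ is eventually a quasi-polynomial $q(i)$. Because every simple root $\alpha$ contributes a factor $(1 - t)$ (since $\deg \alpha = 1$), the pole at $t = 1$ has strictly the largest order, which ensures that the constituent polynomials on each residue class share a common leading coefficient, as required by the definition of quasi-polynomial in the text. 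The degree $\deg q$ is then identified with (the appropriate shift of) $\text{GK}(M)$ by comparing cumulative sums with the PBW polynomial of Proposition~\ref{a1}. Independence of the leading coefficient from $M_0$ follows by the inclusion argument of Proposition~\ref{a1}: if $M_0 \subseteq M'_k$ and $M'_0 \subseteq M_\ell$, then $M_i \subseteq M'_{i + \ell}$ and conversely, forcing the leading cumulative dimensions---and hence the leading coefficient of $q$---to agree.

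The main obstacle I anticipate is rigorously establishing the rational form of $H_M(t)$: because $\text{gr}(U(\mathfrak{n}_{-}))$ is non-commutative, a direct Hilbert-series manipulation on $\text{gr}(K)$ is awkward, so the character-theoretic reduction to Verma modules---where an explicit power-series calculation suffices---is likely the essential move. A secondary subtlety is verifying the strict dominance of the pole at $t = 1$, which guarantees that the constituent polynomials of $q$ have a common leading coefficient; this relies on the existence of simple roots contributing $(1 - t)$ factors in the denominator.
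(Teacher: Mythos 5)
Your Hilbert-series framework is sound as far as it goes, and the reduction to the rational form $H_M(t) = P(t)\big/\prod_{\alpha}(1-t^{\deg\alpha})$ is indeed how the paper opens the proof (after passing to the associated graded). The structural observation that the weighted filtration is actually a grading of $\mathfrak{n}_-$ (compatible with the Lie bracket via the height function $\langle\check\rho,\cdot\rangle$) is also correct. But there is a genuine gap at the step you flag as a ``secondary subtlety,'' and it is in fact the entire difficulty of the proposition.

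You claim that because each simple root contributes a factor $(1-t)$ to the denominator, ``the pole at $t=1$ has strictly the largest order.'' This does not follow, because you have no control over cancellation from the numerator $P(t)$: the order of the pole at $t=1$ is $|\Delta^+| - \operatorname{ord}_{t=1}P$, and whenever $\text{GK}(M) < |\Delta^+|$, the numerator \emph{must} vanish at $t=1$ to a positive order. A priori, nothing prevents the resulting pole at $t=1$ from having the same order as the pole at some other root of unity $\zeta$ (coming from a non-simple root), in which case the constituent polynomials of the quasi-polynomial would have \emph{different} leading coefficients. Ruling this out is precisely the content of the proposition; asserting it from the shape of the denominator alone is circular. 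The paper does not try to control the numerator combinatorially. Instead it applies primary decomposition to $\overline{M} = \operatorname{gr}(M)$, reduces to the case where $\operatorname{Ann}(\overline{M})$ is prime, and then invokes a nontrivial geometric fact (Lemma~\ref{support}: no orbital variety lies in $[\mathfrak{n}^+,\mathfrak{n}^+]$, proved via $C^{\circ}(e)$-orbits on Springer fibers and Spaltenstein's connectivity result) to produce a \emph{degree-one element acting injectively} on $\overline{M}$. This yields $\dim M_i \leq \dim M_{i+1}$ and hence the chain $q_i(n) \leq q_{i+1}(n+1) \leq q_i(n+k)$, which forces the common leading term. Your Hilbert-series route would become a proof if you supplied such a regular element of degree one, but the existence of one is exactly the nontrivial input you are missing.

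A smaller point: your character-theoretic computation of $H_M(t)$ via $[M] = \sum_w c_w[\Delta(w\cdot 0)]$ silently assumes that the filtration $M_i$ is recovered from the character (i.e.\ $\dim(M_i/M_{i-1}) = \sum_{\langle\check\rho,\lambda-\mu\rangle = i}\dim M_\mu$). This is immediate when $M$ is a highest weight module, but for a general $M\in\mathcal{O}_0$ with an arbitrary $\mathfrak{h}$-stable generating subspace $M_0$ one needs a Jordan--H\"older reduction to highest weight modules first (the paper performs this reduction only later, in Proposition~\ref{taylor}). This is fixable, but it should be stated.
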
 \begin{proof} It is clear that the associated graded algebra of $U(\mathfrak{n}_{-})$ with respect to the grading described above, is $S(\mathfrak{n}_{-})$ (where the corresponding elements have the same grading); let $\overline{M} = \text{gr}(M)$. From the general theory of Hilbert polynomials, we deduce there exists a polynomial $q$ with the following property (more generally, this statement is true with $\langle \alpha,  \check{\rho} \rangle$ being replaced by the degrees of the generators).
\begin{align*} \sum_{i \geq 0} \text{dim}(M_i) t^i = \frac{P(t)}{\prod_{\alpha \in \Delta^+} (1 - t^{\langle \alpha,  \check{\rho} \rangle})} \end{align*}
Using the above formula, and inducting on the number of generators, it follows that there exist an integer $k$, and polynomials $q_0, q_1, \cdots, q_{k-1}$ such that $\text{dim}(M_n) = q_i(n)$ if $n \equiv i \; (\text{mod }k)$ for $n$ large. It remains to prove that the these polynomials have the same degree and leading coefficient.

Using primary decomposition for modules, we obtain a filtration $0 = \overline{M}_0 \subset \overline{M}_1 \subset \cdots \subset \overline{M}_{k-1} \subset \overline{M}_k = \overline{M}$, such that for each $i$, $\text{Ann}(\overline{M}_j/\overline{M}_{j-1}) = \mathfrak{p}_i$ for some prime ideal $\mathfrak{p}_i$; and $R/\mathfrak{p}_i$ acts injectively. It is sufficient to prove the above statement for each of the sub-quotients $\overline{M}_j/\overline{M}_{j-1}$. So we may assume that $\text{Ann}(\overline{M}) = \mathfrak{p}$ for some prime ideal $\mathfrak{p}$, and $R/\mathfrak{p}$ acts injectively on $\overline{M}$. The support of $M$ is contained in $(\mathfrak{n}_{-})^*$; we will identify $(\mathfrak{n}_{-})^*$ with $\mathfrak{n}_{+}$ via the Killing form.

It follows using Lemma \ref{support} below, that there exists an element $t$ of degree $1$, such that $t$ acts injectively on $M$. This means that we have injective map from $M_i$ to $M_{i+1}$, given by multiplication by $t$; thus $\text{dim}(M_i) \leq \text{dim}(M_{i+1})$, and $q_i(n) \leq q_{i+1}(n+1) \leq q_i(n+k)$ if $0 \leq i \leq k-2, n \equiv i \; (\text{mod }k)$. This implies that the polynomials $q_i$ and $q_{i+1}$ have the same degree and leading coefficient, as required. \end{proof}

\begin{lemma} \label{support} The support of the module  $\overline{M}$ is not contained inside the subvariety $[\mathfrak{n}^+, \mathfrak{n}^+]$ of $\mathfrak{n}^+$. \end{lemma}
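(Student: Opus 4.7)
The goal is to produce a simple root $\alpha_i$ whose corresponding $F_{\alpha_i}$ does not vanish on $\mathrm{Supp}(\overline{M})$. Under the Killing form identification $(\mathfrak{n}^-)^* \cong \mathfrak{n}^+$, the element $F_\alpha \in \mathfrak{n}^-$ becomes (up to a non-zero scalar) the linear functional extracting the $E_\alpha$-coefficient, so $[\mathfrak{n}^+, \mathfrak{n}^+] = \bigcap_{\alpha_i \text{ simple}} V(F_{\alpha_i})$. Since $\overline{M}$ is finitely generated over $S(\mathfrak{n}^-)$, the Nullstellensatz reduces the lemma to showing that some simple $F_{\alpha_i}$ acts non-nilpotently on $\overline{M}$.

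To find such an $\alpha_i$, I would use that $M \in \mathcal{O}$ has finite length and pick an infinite-dimensional composition factor $L(\lambda)$ of $M$ (the lemma is only needed in this situation: if no infinite-dimensional composition factor exists, then $\dim(M_i/M_{i-1})$ is eventually zero and Proposition \ref{quasipoly} holds trivially). Being infinite-dimensional forces $\langle \lambda, \check{\alpha_i}\rangle \notin \mathbb{Z}_{\geq 0}$ for some simple $\alpha_i$, whence the $\mathfrak{sl}_2^{\alpha_i}$-submodule of $L(\lambda)$ generated by $v_\lambda$ is the simple (infinite-dimensional) Verma of highest weight $\langle \lambda, \check{\alpha_i}\rangle$; consequently $F_{\alpha_i}^n v_\lambda \neq 0$ in $L(\lambda)$ for every $n \geq 0$.

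The technical core is to propagate this non-vanishing to the associated graded. With the standard filtration $L(\lambda)_i = U(\mathfrak{n}^-)^{\leq i} v_\lambda$, every PBW monomial $F_{\beta_1}^{k_1}\cdots F_{\beta_r}^{k_r} v_\lambda$ of weight $\mu$ has $\rho$-degree $\sum_j k_j \langle \check{\rho}, \beta_j\rangle = \langle \check{\rho}, \lambda - \mu\rangle$, which depends only on $\mu$. Thus the weight-space filtration stabilises: $L(\lambda)_i[\mu] = L(\lambda)[\mu]$ for $i \geq \langle \check{\rho}, \lambda - \mu\rangle$ and vanishes otherwise. Taking $\mu = \lambda - n\alpha_i$ shows that $F_{\alpha_i}^n v_\lambda$ represents a non-zero class in the degree-$n$ piece of $\overline{L(\lambda)}$, so $F_{\alpha_i}^n \neq 0$ on $\overline{L(\lambda)}$ for every $n$. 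Combining this with the compatibility of the associated graded functor with short exact sequences along a composition series of $M$ (and the independence of the support from the choice of good filtration) gives $\mathrm{Supp}(\overline{M}) \supseteq \mathrm{Supp}(\overline{L(\lambda)})$, so $F_{\alpha_i}$ acts non-nilpotently on $\overline{M}$, as desired. The main obstacle is the weight/filtration bookkeeping above; a secondary subtlety, handled by the same argument restricted to the top-Gelfand-Kirillov components (which are the only ones affecting the leading coefficient), is that Proposition \ref{quasipoly} in fact invokes the lemma on the primary-decomposition subquotients of $\overline{M}$.
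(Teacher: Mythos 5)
Your proof is correct as a proof of the lemma as literally stated, but it takes a genuinely different and more elementary route than the paper. The paper quotes the fact that $\mathrm{Supp}(\overline{M})$ is a union of orbital varieties (Joseph, Borho--Brylinski) and then argues geometrically that no non-zero orbital variety lies in $[\mathfrak{n}^+,\mathfrak{n}^+]$, using the bijection between components of $\mathbb{O}\cap\mathfrak{n}^+$ and components of the Springer fiber together with Spaltenstein's connectivity result for the maps $\pi_\alpha$. You instead identify $[\mathfrak{n}^+,\mathfrak{n}^+]$ as the common zero locus of the simple $F_{\alpha}$'s and exhibit, via the $\mathfrak{sl}_2$-string through the highest weight vector of an infinite-dimensional composition factor and the observation that the $\check{\rho}$-degree of a weight vector is determined by its weight, a simple $F_{\alpha_i}$ acting non-nilpotently on the associated graded. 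Your approach is self-contained, and it correctly isolates the finite-dimensional edge case, where the statement as written actually fails (the support is $\{0\}\subseteq[\mathfrak{n}^+,\mathfrak{n}^+]$) but is not needed --- a point the paper's proof glosses over at $e=0$. What the paper's route buys in exchange is a strictly stronger, component-by-component conclusion: \emph{every} non-zero orbital variety, hence every top-dimensional irreducible component of the support, escapes $[\mathfrak{n}^+,\mathfrak{n}^+]$. That refinement is what the application in Proposition \ref{quasipoly} actually uses, since the lemma is invoked separately on each primary-decomposition subquotient, whose support is a single irreducible subvariety. Your argument is inherently global --- it produces one good direction $\alpha_i$ detected on \emph{some} component of the support --- and your closing claim that ``the same argument restricted to the top-Gelfand--Kirillov components'' handles this is not justified as written: the weight-string computation does not localize to a prescribed component when the associated variety is reducible. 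So if you intend your lemma to feed into Proposition \ref{quasipoly}, you would still need either the orbital-variety input or a separate argument giving the per-component statement.
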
 \begin{proof} Recall that an orbital variety is an irreducible component of the intersection $\mathbb{O} \cap \mathfrak{n}^+$, where $\mathbb{O}$ is a nilpotent orbit. Since $M$ is a module in category $\mathcal{O}$, it is well-known that the support of $\overline{M}$, is a union of orbital varieties (see Joseph, \cite{jos}, and Borho-Brylinski, \cite{bbry}, for a proof). We will show that no orbital variety is contained inside $[\mathfrak{n}^+, \mathfrak{n}^+]$. 

Pick $e \in \mathbb{O}$, and let $\mathcal{B}_e$ be the Springer fiber: $$ \mathcal{B}_e = \{ \mathfrak{b} \in \mathcal{B} \; | \; e \in \mathfrak{b} \} $$ In Claim $6.5.8$ of \cite{cg}, it is proven that the irreducible components of $\mathbb{O} \cap \mathfrak{n}^+$ are in bijection with $C^{\circ}(e)$-orbits on $\mathcal{B}_e$ (here $C^{\circ}(e)$ denotes the set of connected components of the centralizer of $e$). Suppose an orbital variety $Y$ is contained in $[\mathfrak{n}^+, \mathfrak{n}^+]$, and pick one of the corresponding components $X$ of $\mathcal{B}_e$. Define: $$ \mathcal{B}_e^{\circ} = \{ \mathfrak{b} \in \mathcal{B} \; | \; e \in [\mathfrak{n}^+, \mathfrak{n}^+] \} $$ Since $Y \subseteq [\mathfrak{n}^+, \mathfrak{n}^+]$, from the bijection sketched in Claim $6.5.8$ it is clear that $X \subseteq \mathcal{B}_e^{\circ}$. For each positive root $\alpha \in \Delta^+$, let $P_{\alpha} \supseteq B$ be the corresponding minimal parabolic and let $\pi_{\alpha}: G/B \rightarrow G/P_{\alpha}$ be the natural projection map. Since $X \subseteq \mathcal{B}_e^{\circ}$, it is easy to see that $X \subseteq \pi_{\alpha}^{-1}(\pi_{\alpha}(X)) \subseteq \mathcal{B}_e$. Since $\pi_{\alpha}^{-1}(\pi_{\alpha}(X))$ is irreducible, and $X$ is one of the irreducible components of $\mathcal{B}_e$, in fact we have that $X = \pi_{\alpha}^{-1}(\pi_{\alpha}(X))$ for each $\alpha$. 

Define an equivalence relation on points in $\mathcal{B}$ as the transitive closure of the following relation: if $x, y \in \mathcal{B}$, define $x \sim y$ if $\pi_{\alpha}(x) = \pi_{\alpha}(y)$ for some simple root $\alpha$. It is well-known, that in fact $x \sim y$ for any two points $x,y \in \mathcal{B}$ (see Spaltenstein's paper \cite{spalt} for a reference). Now pick any point $x \in X$; since $X = \pi_{\alpha}^{-1}(\pi_{\alpha}(X))$, any other point in the same equivalence class as $x$ is also in $X$; it follows that $X = \mathcal{B}$. This is only possible when $e=0$, and in this case one easily checks that the orbital variety $Y$ is not contained in $[\mathfrak{n}^+, \mathfrak{n}^+]$. Thus we have reached a contradiction, and so no orbital variety is contained in $[\mathfrak{n}^+, \mathfrak{n}^+]$. \end{proof}

\begin{remark} Above we have used some non-trivial facts about the support of modules in category $\mathcal{O}$; it is possible that the above proof can be simplified, and that the statement holds in greater generality. \end{remark}

\begin{definition} Define $\text{LC}(M)$ to be the leading coefficient of the quasi-polynomial $q$ from Proposition \ref{quasipoly}. \end{definition}

In fact, we conjecture that the two quantities $\text{LC}(M)$ and $\underline{\text{LC}}(M)$ differ by a constant:

\begin{conj} \label{conj} There exists a constant $C$, depending only on the Lie algebra $\mathfrak{g}$ and $d$, such that for all $M \in \mathcal{O}_\lambda^d$, we have: $$\underline{\text{LC}}(M) = C \cdot \text{LC}(M)$$ \end{conj}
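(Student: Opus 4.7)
The plan is to reduce the conjecture, via additivity in the Grothendieck group, to a geometric statement about orbital varieties, and then to appeal to the theory of Joseph/Goldie rank polynomials to relate the two degree functions.

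First I would show that both $\underline{\text{LC}}$ and $\text{LC}$ descend to additive linear functionals on $K_0(\mathcal{O}_0^d)$. This is because both are leading coefficients of Hilbert (quasi-)polynomials of the associated graded $S(\mathfrak{n}_-)$-module $\text{gr}(M)$, and a short exact sequence in $\mathcal{O}_0^{\leq d}$ gives rise (after choosing compatible generating subspaces) to a short exact sequence of graded $S(\mathfrak{n}_-)$-modules; leading coefficients are additive on such sequences, provided one works modulo contributions from modules of strictly smaller GK dimension, which vanish in the Serre quotient $\mathcal{O}_0^d$. In particular it suffices to verify the identity on the classes of the simples $L_w$ with $\text{GK}(L_w)=d$.

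Next, using the general theory of Hilbert series for graded $S(\mathfrak{n}_-)$-modules, I would express both leading coefficients intrinsically as sums over the orbital varieties supporting $\text{gr}(M)$. Writing $\text{supp}(\text{gr}(M)) = \bigcup_i V_i$ with multiplicities $m_i$, one obtains
$$\underline{\text{LC}}(M) = \frac{1}{d!} \sum_i m_i \deg(V_i), \qquad \text{LC}(M) = \frac{1}{d!} \sum_i m_i \deg_{\check{\rho}}(V_i),$$
where $\deg(V)$ is the projective degree of $V \subseteq \mathfrak{n}_+$ with respect to the standard grading and $\deg_{\check{\rho}}(V)$ is its analogue with respect to the grading on $S(\mathfrak{n}_+)$ in which the coordinate on the $\alpha$-th root space has degree $\langle \check{\rho}, \alpha\rangle$. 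A consistency check: for the Verma module, where $V=\mathfrak{n}_+$ with $m=1$, this recovers $\underline{\text{LC}}(\Delta)/\text{LC}(\Delta)=\prod_{\alpha\in\Delta^+}\langle\check{\rho},\alpha\rangle$, in agreement with a direct computation from Proposition \ref{quasipoly}.

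The conjecture then reduces to the purely geometric assertion that the ratio $\deg(V)/\deg_{\check{\rho}}(V)$ depends only on $d$ and $\mathfrak{g}$, and not on the particular $d$-dimensional orbital variety $V$. To attack this I would invoke the Rossmann-Joseph formula, which expresses the characteristic cycle of $L_w$ in terms of a harmonic polynomial on $\mathfrak{h}^*$ of degree $|\Delta^+|-d$ (the Joseph polynomial of the primitive ideal $\text{Ann}(L_w)$). Both $\deg(V_i)$ and $\deg_{\check{\rho}}(V_i)$ can then be recovered as explicit numerical invariants of this polynomial (essentially the top-degree component integrated against two different top forms on $\mathfrak{n}_+$), so the conjecture becomes equivalent to asserting that these two invariants are proportional with a constant depending only on the degree of the polynomial.

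The main obstacle will be this last comparison: a priori two such invariants extracted from a harmonic polynomial are independent, and the required proportionality is delicate. The natural leverage is the rigidity theorem of Joseph, which asserts that Goldie rank polynomials of a fixed degree span a single irreducible $W$-representation (up to scale); on such a representation the two functionals above must be proportional by Schur's lemma, and one would then be left with identifying the constant of proportionality. Making this rigorous requires control over which $W$-representations actually appear in $\mathcal{O}_0^d$ (via the Springer correspondence) and a direct comparison of the two functionals on those representations. For the small-rank case $\mathfrak{g}=\mathfrak{sl}_3$, extending the computations of Example \ref{ex1}, the conjecture can be checked by direct calculation, providing evidence that this approach is on the right track.
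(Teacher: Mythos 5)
First, a point of order: the paper does not prove this statement. It is stated and used only as a conjecture, and Section 4.1 explicitly lists it among open problems (suggesting either singular support of $D$-modules via Beilinson--Bernstein, or elementary Hilbert-polynomial arguments, as possible attacks). So your proposal is not competing with a proof in the paper; it is an attempt at an open problem, and should be judged on its own terms.

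As such, your overall strategy is sensible and your first two reductions are essentially right: both quantities are additive on short exact sequences modulo lower GK dimension, so it suffices to treat simples, and both are computed from the support cycle $\sum_i m_i [V_i]$ of $\mathrm{gr}(M)$ as weighted sums of degrees of orbital varieties (your normalizations need care, since $\mathrm{LC}$ is the leading coefficient of the graded pieces, of degree $d-1$, while $\underline{\mathrm{LC}}$ is that of the cumulative dimension, of degree $d$). Your reduction to the constancy of $\deg(V)/\deg_{\check\rho}(V)$ over $d$-dimensional orbital varieties is a correct sufficient condition and checks out in the examples I can verify (e.g.\ the three components of $\overline{\mathbb{O}}_{\min}\cap\mathfrak{n}^+$ for $\mathfrak{sl}_4$ all give ratio $2$). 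However, there are genuine gaps. (a) The Schur's lemma step is invalid as written: two arbitrary linear functionals on an irreducible $W$-module of dimension $>1$ need not be proportional; Schur applies to intertwiners, not to covectors, and neither "standard degree" nor "$\check\rho$-weighted degree" comes with any visible $W$-equivariance. Worse, the span of Joseph/Goldie-rank polynomials of a fixed degree is in general a direct sum over all two-sided cells of that GK dimension, one special representation each, so even a corrected equivariance argument would produce one constant per cell and leave you to match them. (b) You need the characteristic cycle $\sum_i m_i[V_i]$ to be the same for the two filtrations being compared (the PBW filtration on $U(\mathfrak{g})$ versus the $\check\rho$-weighted filtration on $U(\mathfrak{n}^-)$); the supports agree, but equality of multiplicities along top-dimensional components for filtrations of two \emph{different} ring filtrations is not automatic and needs an argument. (c) The standard degree, unlike $\deg_{\check\rho}$, is not a torus-character invariant (it cannot be read off from $\mathrm{ch}(M)$, which is exactly why the paper works with $\mathrm{LC}$ rather than $\underline{\mathrm{LC}}$), so realizing it as "a numerical invariant of the harmonic polynomial" already presupposes the injectivity of the map from cycles to Joseph polynomials and yields a functional with no evident structure. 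To your credit, you flag the final comparison as the main obstacle; but as it stands the proposal is a plausible reduction plus a heuristic, not a proof.
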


\begin{example} \label{ex2} Returning to Example \ref{ex1}, let us compute $\text{LC}(M)$ and $\underline{\text{LC}}(M)$ when $M$ is a simple modules $L(w \cdot \lambda)$ lying in $\mathcal{O}_{\lambda}^1$ for $\mathfrak{g}=\mathfrak{sl}_3$. 
First consider $M=L((s_2 s_1) \cdot \lambda)$ and recall that: $$L((s_2 s_1) \cdot \lambda) \simeq U(\mathfrak{g}) \otimes_{U(\mathfrak{\mathfrak{p}_1})} V_{(s_2 s_1) \cdot \lambda}$$ Pick a weight basis of $V_{(s_2 s_1) \cdot \lambda}$: $v_1, \cdots, v_k$ where $k = \text{dim}(V_{(s_2 s_1) \cdot \lambda})$. Then a basis for $L((s_2 s_1) \cdot \lambda)$ is given by $E_{31}^i E_{32}^j v_l$ where $i, j \geq 0, 1 \leq k \leq l$; and a basis for $M_{n}/M_{n-1}$ is given by $E_{31}^i E_{32}^j v_l$ where $2i + j = n$. Thus: \begin{align*} \text{dim}(M_n/M_{n-1}) &= | \{ (i, j) \; | \; 2i + j = n \} | \\ &= \begin{cases} k (\frac{n}{2} + 1) &\mbox{if } n \equiv 0 \\ k (\frac{n+1}{2}) & \mbox{if } n \equiv 1 \end{cases} \pmod{2} \\ \text{LC}(M) &= \frac{1}{2} \text{dim}(V_{(s_2 s_1) \cdot \lambda}) = \frac{1}{2} \langle \lambda + \rho, \check{\alpha_2} \rangle \end{align*} Similarly, one may compute that $\underline{\text{LC}}(M) = \langle \lambda + \rho, \check{\alpha_2} \rangle$. Using the descriptions of $L((s_1 s_2) \cdot \lambda), L(s_2 \cdot \lambda)$ and $L(s_1 \cdot \lambda)$ given in Example \ref{ex1}, we may also deduce that:  \begin{align*} \text{LC}(L((s_1 s_2) \cdot \lambda)) &= \frac{1}{2} \langle \lambda + \rho, \check{\alpha_1} \rangle, \; \underline{\text{LC}} (L((s_1 s_2) \cdot \lambda)) = \frac{1}{2} \langle \lambda + \rho, \check{\alpha_1} \rangle \\ \text{LC}(L(s_1 \cdot \lambda)) &= \frac{1}{2} \langle \lambda + \rho, \check{\alpha_2} \rangle, \; \underline{\text{LC}} (L(s_1 \cdot \lambda)) = \frac{1}{2} \langle \lambda + \rho, \check{\alpha_2} \rangle \\ \text{LC}(L(s_2 \cdot \lambda)) &= \frac{1}{2} \langle \lambda + \rho, \check{\alpha_1} \rangle, \; \underline{\text{LC}} (L(s_2 \cdot \lambda)) = \frac{1}{2} \langle \lambda + \rho, \check{\alpha_1} \rangle \qquad \qquad \blacksquare \end{align*} \end{example}

\subsection{Braid group action on derived category of $\mathcal{O}_0$.}

Let $\mathbb{B}_W$ denote the braid group associated to the Weyl group $W$. Here we will recall some facts about the action of $\mathbb{B}_W$ on the derived category $D^b(\mathcal{O}_0)$. None of the results in this section are new, but we sketch the proofs for the reader's convenience. 

Given a simple root $\alpha \in \Delta^+$, and $M \in D^b(\mathcal{O}_0)$, we will define the action of $\tilde{s}_{\alpha}$ on $M$ as follows. 

Recall that the wall-crossing functor $R_{\alpha}: \mathcal{O}_0 \rightarrow \mathcal{O}_0$ is defined as follows. First pick $\mu$ so that: $$ \langle \mu + \rho, \check{\alpha} \rangle = 0, \langle \mu + \rho, \check{\beta} \rangle > 0 \text{ if } \beta \in \Delta^+, \beta \neq \alpha$$

Then define $R_{\alpha} = T_{\mu \rightarrow 0} T_{0 \rightarrow \mu}$. It can be shown that $R_{\alpha}$ does not depend on the choice of $\mu$. Now define:  $$\Phi(\tilde{s}_{\alpha}) M = \text{Cone}(M \rightarrow R_{\alpha} M)$$

The following theorem is originally due to Beilinson-Bernstein:

\begin{theorem} The above action gives rise to an action of the braid group $\mathbb{B}_W$ on $D^b(\mathcal{O}_0)$. \end{theorem}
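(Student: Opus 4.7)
The plan is to verify the defining relations of $\mathbb{B}_W$: that each $\Phi(\tilde{s}_\alpha)$ is an auto-equivalence, and that for each pair of distinct simple roots $\alpha,\beta$ with braid order $m_{\alpha\beta}$ the alternating products $\Phi(\tilde{s}_\alpha)\Phi(\tilde{s}_\beta)\cdots$ and $\Phi(\tilde{s}_\beta)\Phi(\tilde{s}_\alpha)\cdots$ (each with $m_{\alpha\beta}$ factors) are isomorphic as endofunctors of $D^b(\mathcal{O}_0)$.

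For invertibility, I would exploit the biadjunction between $T_{0 \rightarrow \mu}$ and $T_{\mu \rightarrow 0}$, which equips $R_\alpha = T_{\mu \rightarrow 0} T_{0 \rightarrow \mu}$ with both a unit $\eta \colon \mathrm{id} \rightarrow R_\alpha$ (the map used to define $\Phi(\tilde{s}_\alpha)$) and a counit $\varepsilon \colon R_\alpha \rightarrow \mathrm{id}$. Setting $\Phi(\tilde{s}_\alpha^{-1})M := \text{Cone}(R_\alpha M \rightarrow M)[-1]$, a direct diagram chase — relying on the fact that $\varepsilon \circ \eta$ vanishes on the essential image of $T_{0 \rightarrow \mu}$, so that $R_\alpha^2$ splits off a copy of $R_\alpha[1]$ — shows that $\Phi(\tilde{s}_\alpha)$ and $\Phi(\tilde{s}_\alpha^{-1})$ are mutually inverse.

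For the braid relations, I would first dispatch the commuting case: when $\langle \alpha, \check{\beta} \rangle = 0$, translation to the two disjoint walls commutes, so $R_\alpha R_\beta \simeq R_\beta R_\alpha$, and the two units assemble into a commuting square whose total iterated cone is manifestly symmetric in $\alpha$ and $\beta$. This yields $\Phi(\tilde{s}_\alpha)\Phi(\tilde{s}_\beta) \simeq \Phi(\tilde{s}_\beta)\Phi(\tilde{s}_\alpha)$. For non-orthogonal $\alpha,\beta$, the wall-crossing functors factor through the functor of parabolic induction from the rank-two Levi sub-algebra $\mathfrak{l}_{\alpha,\beta}$ generated by the root spaces of $\pm \alpha, \pm \beta$; this reduces the braid relation to the corresponding statement in category $\mathcal{O}$ for $\mathfrak{l}_{\alpha,\beta}$, i.e. in type $A_2$, $B_2$, or $G_2$.

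The rank-two verification is the principal obstacle, and does not simplify formally. Rather than compute the alternating compositions of wall-crossing functors directly on indecomposable projectives — which becomes combinatorially heavy already for $B_2$ and essentially intractable by hand for $G_2$ — I would invoke Soergel's combinatorial equivalence, under which the complex $\Phi(\tilde{s}_\alpha)$ corresponds on the Soergel side to the Rouquier complex of the Soergel bimodule $B_{s_\alpha}$. Rouquier's explicit homotopy equivalences supply the braid relations for these complexes in each of the three rank-two types, and transporting back via Soergel's functor $\mathbb{V}$ yields the desired isomorphism of endofunctors of $D^b(\mathcal{O}_0)$.
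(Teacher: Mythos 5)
The paper does not actually prove this theorem: it attributes the result to Beilinson--Bernstein and delegates the proof to Theorem 5.7, Proposition 5.3 and Lemma 5.10 of Mazorchuk--Stroppel \cite{ms2} (and to Corollary 9.6 of \cite{bk}). Your outline is essentially the standard argument from that literature, so as a strategy it is sound; but two points in the first half need repair. First, the splitting you want is $R_\alpha^2 \simeq R_\alpha \oplus R_\alpha$ (it comes from $T_{0\rightarrow\mu}T_{\mu\rightarrow 0}\simeq \mathrm{id}\oplus\mathrm{id}$ on $\mathcal{O}_\mu$, i.e.\ Corollary 7.12 of \cite{h}; no shift appears), and the statement that ``$\varepsilon\circ\eta$ vanishes on the essential image of $T_{0\rightarrow\mu}$'' does not typecheck, since $\varepsilon\circ\eta$ is an endomorphism of $\mathrm{id}_{\mathcal{O}_0}$ while that essential image lies in $\mathcal{O}_\mu$; the diagram chase actually runs on the triangle identities for the two adjunctions together with the splitting of $R_\alpha^2$. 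Second, and more seriously, cones are not functorial in a triangulated category, so $\mathrm{Cone}(\mathrm{id}\rightarrow R_\alpha)$ does not define an endofunctor of $D^b(\mathcal{O}_0)$ without further input; one must realize $R_\alpha$ as tensoring with a bimodule over the finite-dimensional algebra governing $\mathcal{O}_0$ and work with genuine complexes of functors (or bimodules) up to homotopy. That is exactly what the passage to Rouquier complexes accomplishes, and it needs to be installed at the outset, not only for the rank-two check.

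The one step that fails as stated is the reduction to rank two ``via parabolic induction from $\mathfrak{l}_{\alpha,\beta}$.'' The wall-crossing functors on $\mathcal{O}_0(\mathfrak{g})$ do not factor through parabolic induction from $\mathcal{O}(\mathfrak{l}_{\alpha,\beta})$, and even an intertwining relation of the form $(\text{induction})\circ R^{\mathfrak{l}}_\alpha\simeq R^{\mathfrak{g}}_\alpha\circ(\text{induction})$ would not transport an isomorphism of endofunctors of $D^b(\mathcal{O}_0(\mathfrak{l}_{\alpha,\beta}))$ to one of $D^b(\mathcal{O}_0(\mathfrak{g}))$, because the image of parabolic induction does not generate the latter category. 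Fortunately the step is redundant: once you have applied Soergel's functor $\mathbb{V}$, the bimodules $B_{s_\alpha}$, $B_{s_\beta}$ and all their tensor products involve only the invariant subrings for $s_\alpha$ and $s_\beta$, so Rouquier's rank-two homotopy equivalences apply verbatim inside the full Coxeter system. Delete the parabolic-induction detour and let the Soergel--Rouquier argument carry the whole braid relation.
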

\begin{proof} See Theorem $5.7$, Proposition $5.3$ and Lemma $5.10$ in Mazorchuk-Stroppel's paper \cite{ms2}; see also the proof of Corollary $9.6$ in the expository article \cite{bk} for a different approach. \end{proof}

However, since we are dealing the sub-quotients $\mathcal{O}_0^d$, we would like to have an action of the braid group $\mathbb{B}_W$ on $D^b(\mathcal{O}_0^d)$. Given $M \in D^b(\mathcal{O}_0)$, define $$\text{GK}(M) = \text{max}_{i \in \mathbb{Z}} \text{GK}(H^i(M))$$

\begin{proposition} Given $w \in \mathbb{B}_W$, then $\text{GK}(M) = \text{GK}(\Phi(w) \cdot M)$. In particular, the action of $\mathbb{B}_W$ on $D^b(\mathcal{O}_0)$ induces an action of $\mathbb{B}_W$ on $D^b(\mathcal{O}_0^d)$. \end{proposition}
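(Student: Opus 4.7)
Since $\mathbb{B}_W$ is generated by the $\tilde{s}_\alpha$ and their inverses, it suffices to verify the assertion for $\Phi(\tilde{s}_\alpha^{\pm 1})$. My plan is to combine the defining triangle $M \to R_\alpha M \to \Phi(\tilde{s}_\alpha) M \to M[1]$ with a weak Gelfand--Kirillov bound on the wall-crossing functor $R_\alpha$, and then extract the reverse inequality by applying the same argument to $\Phi(\tilde{s}_\alpha^{-1})$.

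The crucial preliminary is that tensoring with a finite-dimensional $\mathfrak{g}$-module $V$ preserves GK dimension. Using $V \otimes M_0$ as a generating subspace of $V \otimes M$ and the fact that the coproduct is filtered, $\Delta(U(\mathfrak{g})^{\leq i}) \subseteq \sum_{j+k \leq i} U(\mathfrak{g})^{\leq j} \otimes U(\mathfrak{g})^{\leq k}$, one obtains $U(\mathfrak{g})^{\leq i}(V \otimes M_0) \subseteq V \otimes M_i$, whence $\text{GK}(V \otimes M) \leq \text{GK}(M)$. The reverse inequality follows because $V^* \otimes V$ contains a trivial direct summand, so $M$ is a summand of $V^* \otimes V \otimes M$ and the previous bound gives $\text{GK}(M) \leq \text{GK}(V^* \otimes V \otimes M) \leq \text{GK}(V \otimes M)$. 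Each of $T_{0 \to \mu}$ and $T_{\mu \to 0}$ is a direct summand, via block projection, of $V \otimes -$ for a suitable finite-dimensional $V$, so $\text{GK}(T_{0 \to \mu} M), \text{GK}(T_{\mu \to 0} M) \leq \text{GK}(M)$, and hence $\text{GK}(R_\alpha M) \leq \text{GK}(M)$.

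The long exact cohomology sequence of the triangle realizes each $H^i(\Phi(\tilde{s}_\alpha) M)$ as an extension of a subobject of $H^{i+1}(M)$ by a quotient of $H^i(R_\alpha M)$. Lemma \ref{a2}, applied to the resulting short exact sequence, yields $\text{GK}(H^i(\Phi(\tilde{s}_\alpha) M)) \leq \max(\text{GK}(H^i(R_\alpha M)), \text{GK}(H^{i+1}(M))) \leq \text{GK}(M)$, and taking the maximum in $i$ gives $\text{GK}(\Phi(\tilde{s}_\alpha) M) \leq \text{GK}(M)$. For the reverse direction, $\Phi(\tilde{s}_\alpha^{-1})$ admits an analogous description as a shifted cone built from the counit $R_\alpha \to \text{id}$ in place of the unit, so the same argument gives $\text{GK}(\Phi(\tilde{s}_\alpha^{-1}) N) \leq \text{GK}(N)$ for every $N$; applied to $N = \Phi(\tilde{s}_\alpha) M$ and combined with $\Phi(\tilde{s}_\alpha^{-1}) \Phi(\tilde{s}_\alpha) = \text{id}$, this yields $\text{GK}(M) \leq \text{GK}(\Phi(\tilde{s}_\alpha) M)$.

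Once the equality $\text{GK}(M) = \text{GK}(\Phi(w) \cdot M)$ is established for $w \in \mathbb{B}_W$, the thick subcategories of complexes with GK dimension $\leq d$ and $< d$ are each preserved, so the $\mathbb{B}_W$-action descends to the Verdier quotient $D^b(\mathcal{O}_0^d)$. The main obstacle is the GK-invariance of $V \otimes -$: everything subsequent is a formal manipulation of the defining triangle via the long exact cohomology sequence and Lemma \ref{a2}, but the invariance itself requires engaging with the Hopf-algebra structure of $U(\mathfrak{g})$ together with the fact that $V^* \otimes V$ contains a trivial summand.
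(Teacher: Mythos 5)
Your proposal is correct and follows the same route as the paper: reduce to $\tilde{s}_\alpha^{\pm 1}$, bound $\text{GK}(R_\alpha M)$ by $\text{GK}(M)$ via the fact that translation functors are summands of tensoring with a finite-dimensional module, then get one inequality from the defining cone and the other from the analogous cone description of $\Phi(\tilde{s}_\alpha^{-1})$. You simply supply more detail than the paper (the coproduct-filtration argument for $V\otimes -$ and the explicit long-exact-sequence step), all of which is sound.
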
 \begin{proof} Since the simple reflections $\tilde{s}_{\alpha}$ and their inverses generate the braid group, it is sufficient to show that $M$ and $\Phi(\tilde{s}_{\alpha}) M$ have the same Gelfand-Kirillov dimension. First, $T_{\mu \rightarrow 0} T_{0 \rightarrow \mu} M$ has Gelfand-Kirillov dimension at most equal to that of $M$ (since tensoring by a finite-dimensional does not increase the Gelfand-Kirillov dimension). Therefore: $$ \text{GK}(\Phi(\tilde{s}_{\alpha}) M) \leq \text{GK}(M)$$ Since $\Phi(\tilde{s}_{\alpha}^{-1}) M = \text{Cone}(T_{\mu \rightarrow 0} T_{0 \rightarrow \mu}(M) \rightarrow M)[-1]$, by a similar argument the reverse inequality also holds, and the conclusion follows. In fact, the stronger statement that the braid group action preserves the support of a module is true (i.e. $\text{supp}(M) = \text{supp}(\Phi(w) \cdot M)$); see Joseph (\cite{jos2}) for a more detailed discussion. 
 \end{proof}

The induced action of the braid group on the Grothendieck group $K^0(\mathcal{O}_0)$ is particularly simple to describe; we record it for later use (see \cite{bk} for a proof). 

\begin{lemma} \label{WGroth} In the Grothendieck group of $\mathcal{O}_0$, we have the equality $[\Phi(\tilde{s}_{\alpha}) \Delta_{w \cdot 0}] = [ \Delta_{(w s_{\alpha}) \cdot 0} ]$. \end{lemma}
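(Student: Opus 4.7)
My plan is to unpack the definition of $\Phi(\tilde{s}_\alpha)$ and use the standard behavior of translation functors on Verma modules in the Grothendieck group. Specifically, from the definition $\Phi(\tilde{s}_\alpha) M = \text{Cone}(M \to R_\alpha M)$ and the long exact sequence of a triangle, we get
\begin{equation*}
[\Phi(\tilde{s}_\alpha) \Delta_{w \cdot 0}] = [R_\alpha \Delta_{w \cdot 0}] - [\Delta_{w \cdot 0}]
\end{equation*}
in $K^0(\mathcal{O}_0)$, so the task reduces to computing the class $[R_\alpha \Delta_{w \cdot 0}]$.

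For this, I would invoke two well-known properties of translation functors (see e.g.\ Chapter 7 of Humphreys' BGG book). First, since $\mu$ is chosen so that $\langle \mu + \rho, \check{\beta}\rangle > 0$ for every positive root $\beta \neq \alpha$ and $\langle \mu + \rho, \check{\alpha}\rangle = 0$, translation onto the wall is exact on Vermas and satisfies $T_{0 \to \mu} \Delta_{w \cdot 0} = \Delta_{w \cdot \mu}$. Second, translation out of the wall produces a module with a Verma flag; because the integral stabilizer of $\mu$ is exactly $\{1, s_\alpha\}$, the module $T_{\mu \to 0} \Delta_{w \cdot \mu}$ admits a short exact sequence whose subquotients are $\Delta_{w \cdot 0}$ and $\Delta_{w s_\alpha \cdot 0}$ (the ordering determined by the Bruhat relation between $w$ and $ws_\alpha$). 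At the level of the Grothendieck group this gives
\begin{equation*}
[R_\alpha \Delta_{w \cdot 0}] = [T_{\mu \to 0} \Delta_{w \cdot \mu}] = [\Delta_{w \cdot 0}] + [\Delta_{ws_\alpha \cdot 0}].
\end{equation*}

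Combining these two computations yields $[\Phi(\tilde{s}_\alpha) \Delta_{w \cdot 0}] = [\Delta_{ws_\alpha \cdot 0}]$, as claimed. The ``hard'' ingredient here is really the second translation identity, but as it is entirely standard, the argument amounts to citing the appropriate references (e.g.\ \cite{h}, or the original treatment in Bernstein-Gelfand) and assembling the pieces. No delicate analysis of the filtration's ordering is needed, since only the class in $K^0$ matters.
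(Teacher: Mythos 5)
Your argument is correct, and it is the standard one. The paper gives no proof of this lemma, deferring to \cite{bk}; the computation you assemble --- $[\Phi(\tilde{s}_\alpha)\Delta_{w\cdot 0}] = [R_\alpha\Delta_{w\cdot 0}] - [\Delta_{w\cdot 0}]$ from the cone, combined with $T_{0\to\mu}\Delta(w\cdot 0)\cong\Delta(w\cdot\mu)$ and the two-step Verma flag of $T_{\mu\to 0}\Delta(w\cdot\mu)$ with subquotients $\Delta(w\cdot 0)$ and $\Delta(ws_\alpha\cdot 0)$ (Theorems 7.6 and 7.12 of Humphreys' category $\mathcal{O}$ book) --- is exactly what that reference supplies, and you are right that only the $K^0$-classes, not the ordering of the filtration, are needed.
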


\subsection{The central charge map}

Given $M \in \mathcal{O}_0^d$, and $\lambda \in \Lambda^+$ dominant, we have $T_{0 \rightarrow \lambda} M \in \mathcal{O}_{\lambda}^d$. 

\begin{proposition} \label{Z} There exists a unique polynomial function $Z: \mathfrak{h}^* \rightarrow (K^0(\mathcal{C}) \otimes \mathbb{R})^*$ such that: \begin{itemize} \item $Z(\lambda)([M]) = \text{LC}(T_{0 \rightarrow \lambda} M)$ for $\lambda \in \Lambda^+$, $M \in \mathcal{O}_0^d$. \item $ Z(y^{-1} \cdot \lambda) ([\Phi(\tilde{y}) M]) = Z(\lambda)([M])$ for $\lambda \in \mathfrak{h}^*$, $M \in D^b(\mathcal{O}_0^d)$, $y \in W$. \end{itemize} \end{proposition}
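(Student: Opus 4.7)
The plan is to construct $Z$ as the unique polynomial extension to $\mathfrak{h}^*$ of the function $\lambda\mapsto \mathrm{LC}(T_{0\rightarrow\lambda}M)$ originally defined on $\Lambda^+$. The argument breaks into three steps: polynomial extension of LC composed with translation, descent to $K^0(\mathcal{C})$, and verification of the braid-equivariance axiom (2).

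\textbf{Step 1: polynomial extension.} For each fixed $M\in\mathcal{O}_0^d$, I would show that $\Lambda^+\ni\lambda\mapsto\mathrm{LC}(T_{0\rightarrow\lambda}M)$ is the restriction of a unique polynomial on $\mathfrak{h}^*$. The key input is the upcoming Proposition \ref{taylor}, which expresses $\mathrm{LC}(N)$ for $N\in\mathcal{O}_\mu^d$ as an explicit Taylor coefficient of the character $\mathrm{ch}(N)$ expanded around an appropriate point. Applied to $N=T_{0\rightarrow\lambda}M$, whose character admits the standard ``tensor with $V_\lambda$, project to the $\lambda$-block'' description and therefore depends polynomially on $\lambda$, this yields a manifestly polynomial formula for $\mathrm{LC}(T_{0\rightarrow\lambda}M)$ in $\lambda$. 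Uniqueness of the extension is immediate since $\Lambda^+$ is Zariski-dense in $\mathfrak{h}^*$.

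\textbf{Step 2: descent to $K^0$.} Translation functors are exact and $\mathrm{LC}$ is additive on short exact sequences in $\mathcal{O}_\lambda^{\le d}$ and vanishes on $\mathcal{O}_\lambda^{<d}$, so for each $\lambda\in\Lambda^+$ the assignment $M\mapsto\mathrm{LC}(T_{0\rightarrow\lambda}M)$ descends to a linear functional on $K^0(\mathcal{C})\otimes\mathbb{R}$. Assembled with Step 1, this produces the desired polynomial map $Z:\mathfrak{h}^*\to(K^0(\mathcal{C})\otimes\mathbb{R})^*$ satisfying condition (1), and uniqueness of $Z$ already follows from (1) alone.

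\textbf{Step 3: braid-compatibility.} Condition (2) is, for each fixed $y\in W$ and $[M]$, a polynomial identity in $\lambda$, so it suffices to verify it on a Zariski-dense subset of $\mathfrak{h}^*$. Because $\mathbb{B}_W$ is generated by the $\tilde{s}_\alpha$ and $\Phi$ descends to the Weyl-group action on $K^0(\mathcal{C})$ by Lemma \ref{WGroth}, it is enough to check the case $y=s_\alpha$, i.e.\ that $Z(s_\alpha\lambda)[\Phi(\tilde{s}_\alpha)M]=Z(\lambda)[M]$. Here I would invoke the compatibility advertised in the outline of Section 3: combining the wall-crossing description $\Phi(\tilde{s}_\alpha)M=\mathrm{Cone}(M\to R_\alpha M)$ with the Taylor-coefficient description of LC and an explicit character calculation, one obtains the agreement on a dense subset of $\lambda$, which upgrades to the desired polynomial identity.

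\textbf{Main obstacle.} The principal difficulty is Step 1. By definition $\mathrm{LC}$ is the leading coefficient of a quasi-polynomial growth function and is thus intrinsically a \emph{limit} quantity, so its polynomial dependence on the translation parameter $\lambda$ is not at all apparent from first principles. Proposition \ref{taylor} is the essential bridge: it reinterprets this limit as an explicit Taylor coefficient of the character, converting the question into a polynomial statement about how the $\lambda$-block character of $T_{0\rightarrow\lambda}M$ varies with $\lambda$, which is then straightforward. Granting Proposition \ref{taylor} and the Section 3 braid-compatibility of LC, Steps 2 and 3 are essentially formal.
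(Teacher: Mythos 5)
Your proposal is correct and follows essentially the same route as the paper: the paper also defines $Z(\lambda)([M])$ by the explicit polynomial formula $c\sum_w a_w\{(w(\rho+\lambda))(\check\rho)\}^{|\Delta^+|-d}$ extracted from the character written in the Verma basis, identifies it with $\mathrm{LC}(T_{0\rightarrow\lambda}M)$ on $\Lambda^+$ via Proposition \ref{taylor} (using $T_{0\rightarrow\lambda}\Delta(w\cdot 0)=\Delta(w\cdot\lambda)$), and verifies the $W$-equivariance by the substitution $w\mapsto wy^{-1}$ using Lemma \ref{WGroth}. The only cosmetic difference is that the paper handles general $y\in W$ in one computation rather than reducing to simple reflections.
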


\begin{example} \label{ex3} Before proving this proposition, let us return to the example with $\mathfrak{g} = \mathfrak{sl}_3$, $d=1$ and verify the first part of the above Proposition by calculating the function $Z$. More precisely, we will calculate $Z(\lambda)[M]$ when $M$ is one of the four simple objects in $\mathcal{O}_0^{d}$.

First let us calculate $Z(\lambda)[M_1]$ for $M_1 = L((s_2 s_1) \cdot 0) = \Delta_{\mathfrak{p}_1}((s_2 s_1) \cdot 0)$. In this case, using the computations from Example \ref{ex2}, we have: \begin{align*} T_{0 \rightarrow \lambda} M_1 \simeq L((s_2 s_1) \cdot \lambda) &\simeq \Delta_{\mathfrak{p}_1}((s_2 s_1) \cdot \lambda) \\ Z(\lambda)[M_1] = \frac{1}{2} \langle \lambda + \rho, \check{\alpha_2} \rangle \end{align*} Similarly, we compute that: \begin{align*} Z(\lambda)[ L((s_1 s_2) \cdot 0) ] &= \frac{1}{2} \langle \lambda + \rho, \check{\alpha_1} \rangle \\ Z(\lambda) [ L(s_1 \cdot 0) ] &= \frac{1}{2} \langle \lambda + \rho, \check{\alpha_2} \rangle \\ Z(\lambda) [ L(s_2 \cdot 0) ] &= \frac{1}{2} \langle \lambda + \rho, \check{\alpha_1} \rangle \qquad \qquad \blacksquare \end{align*} \end{example}

\subsection{Main result} \label{mainres} Now we are ready to state the main result. In the introduction, we briefly described what a ``real variation of stability conditions'' is (as originally defined in Section $1.4$ of \cite{abm}). Now we will elaborate and give a more detailed definition.

\begin{definition} \label{realvar} Let $\mathcal{C}$ be a finite type triangulated category, and let $\Sigma$ be a discrete collection of affine hyperplanes in a finite-dimensional, real vector space $V$. Let $\textbf{Alc}$ (the set of ``alcoves") be the connected components of $V^0 = V \backslash \Sigma$. For each affine hyperplane in $\Sigma$, consider the parallel hyperplane passes through $0$, and let $\Sigma_{lin}$ be the set of those hyperplanes. Fix a component $V^+$ of $V \backslash \Sigma_{lint}$. Given two alcoves $A, A' \in \textbf{Alc}$ which share a co-dimension 1 face, we say that $A'$ is above $A$ if, after we shift the hyperplane so that it passes through $0$, then $A'$ lies on the same side of the hyperplane as $V^+$.

A ``real variation of stability conditions'' on $\mathcal{C}$ consist of a polynomial map $Z: V \rightarrow (K^0(\mathcal{C}) \otimes \mathbb{R})^*$ (known as ``the central charge"), and a map $\tau$ from \textbf{Alc} to the set of bounded $t$-structures on $\mathcal{C}$, such that: \begin{itemize} \item Let $A \in \textbf{Alc}$, and let $M$ be a non-zero object in the heart $\mathcal{A}$ of $\tau(A)$. Then $\langle Z(x), [M] \rangle > 0$ for $x \in A$. \item Let $A' \in \textbf{Alc}$ be another alcove sharing a co-dimension one face $H$ with $A$, and lying above $A$. Let $\mathcal{A}_n$ be the Serre subcategory consisting of objects  $M$ such that the polynomial function $x \rightarrow \langle Z(x), [M] \rangle$ has a zero of order at least $n$ on $H$. Also define $\mathcal{C}_n = \{ C \in \mathcal{C} \; | \; H^i_{\tau(A)}(C) \in \mathcal{A}_n \}$. Then the truncation functors for $\tau(A')$ preserves the filtration by $\mathcal{C}_n$, and the two t-structures on $\mathcal{C}_n / \mathcal{C}_{n+1}$ induced by $\tau(A)$ and $\tau(A')$ differ by a shift of $[n]$. \end{itemize} \end{definition}

\begin{theorem} \label{main} The following datum constitutes an example of ``real variations of stability conditions": \begin{itemize} \item Let $V = \mathfrak{h}^*$, and let $\Sigma$ consist of the co-root hyperplanes $\langle \lambda + \rho, \check{\alpha} \rangle = 0$ (where $\check{\alpha} \in \Delta^+$).  \item The set of alcoves, $\textbf{Alc}$ are naturally identified with the Weyl group $W$; denote by $\underline{w}$ the alcove consisting of $\lambda$, with $\langle w^{-1}(\lambda + \rho), \check{\alpha} \rangle \geq 0$ for all $\alpha \in \Delta^+$.  \item Let $V^+$ be the alcove $\underline{1}$ . \item Let $\mathcal{C} = D^b(\mathcal{O}_0^{d})$. \item Let the central charge $Z: \mathfrak{h}^* \rightarrow (K^0(\mathcal{C}) \otimes \mathbb{R})^*$ be the map constructed in Proposition \ref{Z}. \item Given $w \in W$, let $\tilde{w}$ be its lift to the braid group $\mathbb{B}_W$. Let $\tau(\underline{w})$ be the image of the tautological $t$-structure on $\mathcal{C}$ under the automorphism $\Phi(\tilde{w})$. \end{itemize} \end{theorem}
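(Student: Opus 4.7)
The plan is to verify the two axioms of Definition \ref{realvar} in turn, using the equivariance in the second bullet of Proposition \ref{Z} to reduce both to statements about the tautological $t$-structure on $\mathcal{O}_0^d$. Concretely, if $M$ lies in the heart $\mathcal{A}$ of $\tau(\underline{w}) = \Phi(\tilde{w})(\mathcal{O}_0^d)$ and $N := \Phi(\tilde{w}^{-1})(M) \in \mathcal{O}_0^d$, then $\langle Z(x), [M]\rangle = \langle Z(w^{-1}x), [N]\rangle$. Since $x \in \underline{w}$ iff $w^{-1}x \in \underline{1}$, and a pair of adjacent alcoves $\underline{w}, \underline{ws_\alpha}$ is carried by $\tilde{w}^{-1}$ to $\underline{1}, \underline{s_\alpha}$, the whole proof reduces to the case $w = 1$.

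For the positivity axiom it suffices to show $\langle Z(y), [N]\rangle > 0$ for every nonzero $N \in \mathcal{O}_0^d$ and every $y$ in the interior of $\underline{1}$. When $\lambda \in \Lambda^+$ is integral regular dominant, $T_{0 \rightarrow \lambda}$ restricts to an equivalence $\mathcal{O}_0^d \xrightarrow{\sim} \mathcal{O}_\lambda^d$, so $T_{0 \rightarrow \lambda} N$ is a nonzero module of GK-dimension $d$ and its leading coefficient is strictly positive by Proposition \ref{quasipoly}. Upgrading strict positivity from the lattice of integral regular weights to all interior points of the dominant chamber uses the refinement in Section $3$ which expresses $\text{LC}$ via a Taylor coefficient of the character; this realizes $Z(\cdot)[N]$ as a polynomial that is non-negative on a Zariski-dense positive subset of $\underline{1}$ and hence strictly positive throughout the interior.

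For the second axiom, one compares the tautological $t$-structure on $\mathcal{O}_0^d$ with its image under $\Phi(\tilde{s}_\alpha)$ relative to the wall $H_\alpha = \{\langle \lambda + \rho, \check\alpha\rangle = 0\}$. The subcategory $\mathcal{A}_n$ consists of $N \in \mathcal{O}_0^d$ for which $\lambda \mapsto \text{LC}(T_{0 \rightarrow \lambda} N)$ vanishes to order $\geq n$ on $H_\alpha$. The crucial input, carried out in Section $3$, is that this filtration is two-step, i.e.\ $\mathcal{A}_n = 0$ for $n \geq 2$: using the Taylor-expansion formula for $\text{LC}$, one identifies $Z(\cdot)[N]$ with a $W$-harmonic polynomial expression on $\mathfrak{h}^*$, and a nonzero harmonic polynomial cannot vanish to order $\geq 2$ on a single root hyperplane. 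Once this collapse is in hand, the shift condition follows from direct computation with $\Phi(\tilde{s}_\alpha)(N) = \text{Cone}(N \rightarrow R_\alpha N)$: for $N \in \mathcal{A}_1$ the translation $T_{0 \rightarrow \mu} N$ has GK-dimension $< d$, so $R_\alpha N$ vanishes in $\mathcal{O}_0^d$ and the cone equals $N[1]$, giving the required shift on $\mathcal{C}_1 = \mathcal{C}_1/\mathcal{C}_2$; for $N \in \mathcal{A}_0$ the cone lies in cohomological degree $0$ modulo $\mathcal{C}_1$, so the induced $t$-structures on $\mathcal{C}_0/\mathcal{C}_1$ agree.

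The principal obstacle is the harmonic polynomial step underpinning the two-step collapse of $\mathcal{A}_\bullet$. Translating the dimension-theoretic definition of $\text{LC}$ into an explicit $W$-harmonic polynomial expression is the core content of the Section $3$ machinery, and it is this identification, rather than either of the equivariance reductions above, that carries the weight of the proof.
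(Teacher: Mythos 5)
Your overall route coincides with the paper's: reduce to $w=1$ via the $W$-equivariance of $Z$, establish positivity on the integral dominant lattice, and collapse the filtration $\mathcal{A}_n$ to two steps using harmonic polynomials. But there is a genuine gap in the positivity axiom. You claim that non-negativity of the polynomial $\lambda\mapsto\langle Z(\lambda),[N]\rangle$ on a (Zariski-)dense positive subset of $\underline{1}$ "hence" gives strict positivity throughout the interior. That inference fails: a polynomial can be non-negative on a full-dimensional cone and strictly positive at every nonzero lattice point while still vanishing at interior real points (e.g.\ $(x-\sqrt{2}\,y)^2$ on the positive quadrant vanishes along an interior ray missing the lattice). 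Two separate arguments are required. First, one must observe that $Z(\lambda)[N]$ is a \emph{homogeneous} polynomial in the coordinates $\langle\lambda+\rho,\check{\alpha_i}\rangle$ and apply Lemma \ref{positive} to pass from $\geq 0$ on the lattice to $\geq 0$ on the closed real cone. Second, to upgrade $\geq 0$ to $>0$ at interior points one needs the mean-value property of harmonic polynomials applied to $f_N=Z(\cdot)[N]$ (harmonic by Proposition \ref{taylorharmonic}): a zero at an interior point forces $f_N$ to vanish on a neighbourhood, hence identically, contradicting $f_N(\lambda)=\mathrm{LC}(T_{0\rightarrow\lambda}N)>0$ at regular integral dominant $\lambda$. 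You invoke harmonicity only for the two-step collapse; it is equally indispensable here.

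For the wall-crossing axiom your treatment of the associated graded pieces is in the right spirit (your derivation of $\Phi(\tilde{s}_\alpha)N\simeq N[1]$ from the drop in Gelfand--Kirillov dimension of $T_{0\rightarrow\mu}N$ is a legitimate, slightly softer variant of Lemma \ref{weak}, which proves the stronger vanishing $T_{0\rightarrow\mu}L(w\cdot 0)=0$), but two required checks are missing. (i) The axiom demands that the truncation functors of the \emph{other} $t$-structure preserve the filtration $\mathcal{C}_n$; this is not formal and in the paper requires upgrading the shift identity from objects of the heart to complexes all of whose cohomologies lie in $\mathcal{A}^1_{\underline{1},\underline{s_\alpha}}$ (Lemma \ref{strong}), by induction on the number of nonvanishing cohomology objects. (ii) The $n=0$ statement, that $\Phi(\tilde{s}_\alpha)$ acts as the identity modulo $\mathcal{C}_1$, is asserted without proof; it requires showing that the composite of the adjunction maps $A\rightarrow T_{\mu\rightarrow 0}T_{0\rightarrow\mu}A\rightarrow A$ vanishes for $A$ simple and that $T_{0\rightarrow\mu}$ kills the cone of $\Phi(\tilde{s}_\alpha)A\rightarrow A$ (Lemma \ref{szero}). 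As written, your proposal does not supply either of these steps.
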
 

\section{Leading coefficients and the central charge}

Given $M \in \mathcal{O}_\lambda$, for each $\mu \in \mathfrak{h}^*$ denote by  $M_{\mu}$ the corresponding weight space. Recall that $$ \text{ch}(M) = \sum_{\mu \in \mathfrak{h}^*} (\text{dim }M_{\mu}) e^{\mu}$$ Using the PBW theorem, $\Delta(w \cdot \lambda)$ $$ \text{ch}( \Delta(w \cdot \lambda)) = \frac{e^{w \cdot \lambda}}{ \prod_{\alpha \in \Delta^+} (1 - e^{-\alpha}) } = \frac{e^{w \rho}}{ \prod_{\alpha \in \Delta^+} (e^{\alpha / 2} - e^{-\alpha / 2}) }$$

Given $M \in \mathcal{O}_{\lambda}$, since the Verma modules $\{ \Delta(w \cdot \lambda) \}_{w \in W}$ form a basis of the Grothendieck group $K^0(\mathcal{O}_{\lambda})$, we have $$ [M] = \sum_{w \in W} a_w [\Delta(w \cdot \lambda)]$$ for some $a_w \in \mathbb{Q}$. Then we can express $$\text{ch}(M) =  \frac{ \sum_{w \in W} a_w e^{w \rho}}{ \prod_{\alpha \in \Delta^+} (e^{\alpha / 2} - e^{-\alpha / 2}) }$$ 

In order to prove the above proposition, we will make use of the following Lemma. Given a module $M \in \mathcal{O}_{\lambda}^d$, the following Lemma tells us how to deduce $\text{GK}(M)$ and $\text{LC}(M)$, knowing the character of $M$.

\begin{proposition} \label{taylor} Suppose $$\text{ch}(M) =  \frac{ \sum_{w \in W} a_w e^{w (\rho+\lambda)}}{ \prod_{\alpha \in \Delta^+} (e^{\alpha / 2} - e^{-\alpha / 2}) }$$ Expand the numerator as a Taylor series, so that we obtain a function, $f_M$ on $\mathfrak{h}^*$. Let $k$ be minimal, such that the degree $k$ component, $f_M^k$, of this polynomial does not vanish. Then, there exists a constant $c$ (depending on $\mathfrak{g}$ and $d$) such that (here $\check{\rho}$ is the half-sum of the positive roots): \begin{align*} \text{GK}(M) &= |\Delta^+| - k \\ \text{LC}(M) &= c f_M^k(\check{\rho}) \end{align*} \end{proposition}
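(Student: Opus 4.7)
The plan is to extract $\text{GK}(M)$ and $\text{LC}(M)$ by evaluating both sides of the character identity
$$\text{ch}(M) = \frac{\sum_w a_w e^{w(\rho+\lambda)}}{\prod_{\alpha\in\Delta^+}(e^{\alpha/2}-e^{-\alpha/2})}$$
on the one-parameter family $x=t\check{\rho}\in\mathfrak{h}$ for $t>0$ small, and then comparing the leading asymptotics of the two sides as $t\to 0^+$. The crucial point is that the filtration $M_\bullet$ used in Proposition \ref{quasipoly} to define $\text{LC}(M)$ is \emph{precisely} the filtration by $\check{\rho}$-level, so the leading singularity of $\text{ch}(M)(t\check{\rho})$ at $t=0$ records $(\text{GK}(M),\text{LC}(M))$; meanwhile the right-hand side makes the same leading singularity transparent in terms of the Taylor expansion of the numerator.

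For the left-hand side, writing $u=e^{-t}$ and $N_0=\max_\mu\langle\mu,\check{\rho}\rangle$, we group weight spaces by $\check{\rho}$-level:
$$\text{ch}(M)(t\check{\rho})=\sum_\mu\dim(M_\mu)\,e^{t\langle\mu,\check{\rho}\rangle}=u^{-N_0}\sum_{n\geq 0}\tilde{q}(n)\,u^n,\quad \tilde{q}(n):=\sum_{\langle\mu,\check{\rho}\rangle=N_0-n}\dim M_\mu.$$
Since weights in a finitely generated object of $\mathcal{O}_\lambda$ are bounded above and their multiplicities grow only polynomially in level, this converges for $t>0$ small. Invoking Proposition \ref{quasipoly} (applied to each summand in a generating-set decomposition of $M$ and then combined by additivity), $\tilde{q}(n)$ is asymptotically $\text{LC}(M)\,n^{\text{GK}(M)-1}$. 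A routine Tauberian calculation -- using that a quasi-polynomial sequence of degree $d_q$ and leading coefficient $c$ has generating function $\sim c\cdot d_q!/(1-u)^{d_q+1}$ as $u\to 1^-$ -- then yields
$$\text{ch}(M)(t\check{\rho})\sim\frac{\text{LC}(M)\cdot(\text{GK}(M)-1)!}{t^{\text{GK}(M)}}\qquad\text{as } t\to 0^+.$$

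For the right-hand side, I Taylor-expand numerator and denominator about $0$ separately. The denominator $\prod_\alpha(e^{t\langle\alpha/2,\check{\rho}\rangle}-e^{-t\langle\alpha/2,\check{\rho}\rangle})=\prod_\alpha 2\sinh(t\langle\alpha,\check{\rho}\rangle/2)$ has leading order $t^{|\Delta^+|}\prod_{\alpha\in\Delta^+}\langle\alpha,\check{\rho}\rangle$, while by the definition of $k$ the numerator $f_M(t\check{\rho})$ starts at $t^k f_M^k(\check{\rho})$. This gives
$$\text{ch}(M)(t\check{\rho})\sim\frac{f_M^k(\check{\rho})}{t^{|\Delta^+|-k}\prod_{\alpha\in\Delta^+}\langle\alpha,\check{\rho}\rangle}.$$
Matching the power of $t$ with the previous asymptotic forces $\text{GK}(M)=|\Delta^+|-k$ (in particular $f_M^k(\check{\rho})\neq 0$), and matching leading coefficients gives $\text{LC}(M)=c\,f_M^k(\check{\rho})$ with $c=\bigl[(\text{GK}(M)-1)!\prod_{\alpha\in\Delta^+}\langle\alpha,\check{\rho}\rangle\bigr]^{-1}$, a constant depending only on $\mathfrak{g}$ and $d$.

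The hardest step, I expect, is the analytic justification: one must check that the formal identity above really holds as an equality of honest analytic functions on the ray $\{t\check{\rho}:t>0\}$ near $0$, and that passing from a quasi-polynomial (rather than genuinely polynomial) coefficient sequence to the leading pole of the generating function causes no trouble -- it does not, since the constituent polynomials of the quasi-polynomial share a common leading coefficient, so all the lower-period oscillations contribute to strictly subleading singular behavior. A secondary point worth flagging is well-posedness on the quotient: the coefficients $a_w$ are only canonical modulo the Verma decomposition of a module of GK-dimension $<d$, but any such contribution forces $f$ to vanish to Taylor order strictly greater than $|\Delta^+|-d=k$, so the leading component $f_M^k$ is unchanged, making the statement intrinsic to $\mathcal{O}_\lambda^d$.
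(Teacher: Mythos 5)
Your strategy is the same as the paper's: restrict the character identity to the ray $t\check\rho$, read off the order and leading coefficient of the pole at $t=0$ on the left from the Hilbert series of the $\check\rho$-graded filtration, and on the right from the Taylor expansions of numerator and denominator; the Tauberian step and the leading term of the denominator are handled correctly. But there is a genuine gap where you write that ``by the definition of $k$ the numerator $f_M(t\check\rho)$ starts at $t^k f_M^k(\check\rho)$.'' The proposition defines $k$ as the minimal degree for which $f_M^k$ is nonzero \emph{as a polynomial}, whereas what your asymptotic matching actually determines is $k' := \mathrm{ord}_{t=0} f_M(t\check\rho)$, i.e.\ the minimal $i$ with $f_M^i(\check\rho)\neq 0$. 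A priori $k'\geq k$, and if $f_M^{k}(\check\rho)=0$ while $f_M^{k}\not\equiv 0$ then your displayed asymptotic for the right-hand side is false; the parenthetical ``in particular $f_M^k(\check\rho)\neq 0$'' is circular, since it is exactly what must be proved. Comparing poles only yields $\mathrm{GK}(M) = |\Delta^+| - k'$ together with $f_M^i(\check\rho)=0$ for $i<k'$. The paper closes this by rerunning the whole argument with $\check\rho$ replaced by an arbitrary $\check\rho'$ having positive integer pairings with the simple roots (the degree of the resulting quasi-polynomial is still $\mathrm{GK}(M)$, the filtrations being commensurable), so that $f_M^i$ vanishes for $i<k'$ on a Zariski-dense set and hence identically; this is what identifies $k$ with $k'$. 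Your proof needs this, or an equivalent, final step.

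A secondary imprecision: the identification of the filtration $M_\bullet$ with the filtration by $\check\rho$-level holds on the nose only for highest weight modules with $M_0$ the highest weight line. For general $M$ the paper first reduces to that case via a Jordan--H\"older filtration, using that both $\mathrm{LC}(\cdot)$ and $f^k_{(\cdot)}(\check\rho)$ are additive on short exact sequences all of whose terms have the same GK dimension; your ``generating-set decomposition'' gestures at this but should be replaced by that reduction. The precise value of $c$ is immaterial, as the statement only asserts its existence.
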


\begin{example} Suppose $M = L(\lambda)$ is the finite-dimensional irreducible module with highest weight $\lambda$, so that by the Weyl character formula $$\text{ch}(M) =  \frac{\sum_{w \in W} \text{sgn}(w) e^{w (\rho+\lambda)}}{\prod_{\alpha \in \Delta^+} (e^{\alpha / 2} - e^{-\alpha / 2}) }$$ It can be shown that for $k < |\Delta^+|$: $$ \sum_{w \in W} \text{sgn}(w) \{ w(\rho + \lambda) \}^k = 0 $$ Thus $k = |\Delta^+|$ is the smallest $k$ for which $f_M^k \neq 0$. Clearly $M$ has Gelfand-Kirillov dimension $0$, so this is consistent with the first claim in Proposition \ref{taylor}.

On the other end of the spectrum, suppose instead that $M$ is the Verma module $\Delta(\lambda)$ for some $\lambda \in \mathfrak{h}^*$. The function $f_M$ is the Taylor expansion of $e^{\lambda+\rho}$, and $k=0$ (since the degree $0$ component of $f_M$ is non-zero). By Lemma \ref{gkdim}, $M$ has Gelfand-Kirillov dimension $|\Delta^+|$; again, this is consistent with the first claim in Proposition \ref{taylor}. $\blacksquare$ \end{example}

\begin{example} Now let us verify Proposition \ref{taylor} for the $4$ simple modules lying in $\mathcal{O}_0^1$ for $\mathfrak{g} = \mathfrak{sl}_3$. From the discussion in Example \ref{ex1}, it follows that: \vspace{10mm} \begin{align*} \text{ch } L((s_1 s_2) \cdot \lambda) &= \text{ch } \Delta((s_1 s_2) \cdot \lambda) - \text{ch } \Delta((s_1 s_2 s_1) \cdot 0) \\ \text{ch } L((s_2 s_1) \cdot \lambda) &= \text{ch } \Delta((s_2 s_1) \cdot \lambda) - \text{ch } \Delta( (s_1 s_2 s_1) \cdot \lambda) \\ \text{ch } L(s_1 \cdot \lambda) &= \text{ch } \Delta(s_1 \cdot \lambda) - \text{ch } \Delta((s_1 s_2) \cdot \lambda) - \text{ch } \Delta((s_2 s_1) \cdot \lambda) + \text{ch } \Delta((s_1 s_2 s_1) \cdot \lambda) \\ \text{ch } L(s_2 \cdot \lambda) &= \text{ch } \Delta(s_2 \cdot \lambda) - \text{ch } \Delta((s_1 s_2) \cdot \lambda) - \text{ch } \Delta((s_2 s_1) \cdot \lambda) + \text{ch } \Delta((s_1 s_2 s_1) \cdot \lambda) \end{align*} \vspace{5mm}
For each of these modules $M$, clearly $f_M^0 = 0$, and $f_M^1$ is as follows: \begin{align*} M &= L((s_2 s_1) \cdot 0), f_M^1 = \langle \lambda + \rho, \check{\alpha_2} \rangle \alpha_1 \\ M &= L((s_1 s_2) \cdot 0), f_M^1 = \langle \lambda + \rho, \check{\alpha_1} \rangle \alpha_2 \\ M &= L(s_1 \cdot 0), f_M^1 = \langle \lambda + \rho, \check{\alpha_2} \rangle \alpha_2 \\ M &= L(s_2 \cdot 0), f_M^1 = \langle \lambda + \rho, \check{\alpha_1} \rangle \alpha_1 \end{align*} Using the calculations in Example \ref{ex3}, it is clear these that these computations are consistent with Proposition \ref{taylor}.  $\blacksquare$ \end{example}

\begin{proof}[Proof of Proposition \ref{taylor}] We may make the assumption that $M$ is a highest weight module, i.e. it is a quotient of $\Delta(\lambda)$ for some $\lambda$. To see this, note that the functions $M \rightarrow \text{LC}(M)$ and $M \rightarrow f_M^k(\check{\rho})$ are additive on exact triangles $0 \rightarrow M_1 \rightarrow M \rightarrow M_2 \rightarrow 0$ such that $\text{GK}(M_1)=\text{GK}(M_2) = \text{GK}(M)$. Thus we can choose a Jordan-Holder filtration of $M$, where each simple sub-quotient has the same Gelfand-Kirillov dimension; the conclusion would then follow if we knew it to be true for all highest weight modules, since all simples fall into this category.

So let $M_0 = \mathbb{C} v_{\lambda}$, for some highest weight vector $v_{\lambda} \in M$. Given $\mu \in \mathfrak{h}^*$ such that $M_{\mu} \neq 0$, define $d(\mu) = \langle \check{\rho}, \lambda - \mu \rangle$; alternatively $d(\mu) = \sum_{i \in I} d_i$ if $\lambda - \mu = \sum_{i \in I} d_i \alpha_i$ (note that $d_i \geq 0$, since $M$ is a quotient of $\Delta(\lambda)$). Then: \begin{align*} M_i &= \bigoplus_{d(\mu) \leq i} M_{\mu} \\ \text{dim } M_i &= \sum_{d(\mu) \leq i} \text{dim } M_{\mu} \end{align*} To prove this proposition, we will explicitly compute both sides of the below equality. Let us start with the LHS. \begin{align*} (\sum_{\mu \in \Lambda^+} (\text{dim }M_{\mu}) e^{\mu}) (e^{t \check{\rho}}) \prod_{\alpha \in \Delta^+} (e^{\frac{\alpha}{2}} - e^{\frac{-\alpha}{2}}) (e^{t \check{\rho}}) &= \sum_{w \in W} d_w e^{w(\lambda + \rho)} (e^{t \check{\rho}}) \\ (\sum_{\mu \in \Lambda^+} (\text{dim }M_{\mu}) e^{\mu}) e^{t \check{\rho}} &= e^{t \langle \lambda, \check{\rho} \rangle} \sum_{n \geq 0} \text{dim} (M_n / M_{n-1}) e^{-nt} \end{align*} For $n$ sufficiently large, we have $\text{dim } M_n = p_M(n)$ for some polynomial $p_M$; thus $\text{dim}(M_n / M_{n-1}) = q_M(n)$, where $q_M(x) = p_M(x) - p_M(x-1)$. By differentiating the identity $1 + s + s^2 + \cdots = (1-s)^{-1}$ repeatedly, we obtain that: \begin{align*} \sum_n n(n-1) \cdots (n-k+1) s^{n-k} &= k! (1-s)^{-k-1} \end{align*} By taking linear combinations of the above identity, we deduce that there exists a polynomial $\tilde{q}_M$ with degree $d$, leading coefficient $d! \text{LC}(M)$ and no constant term, satisfying the following. Then we will evaluate at $s=e^{-t}$, and continue with the computation of the left hand side (here the polynomial $r_M$ accounts for the fact that $\text{dim }M_n \neq p(n)$ at finitely many values, and $C$ is some constant): \begin{align*} \sum_{n} q_M(n) s^n &= \tilde{q}_M ((1-s)^{-1}) \\ \sum_{n \geq 0} \text{dim} (M_n / M_{n-1}) e^{-nt} &= \tilde{q}_M((1-e^{-t})^{-1}) + r_M(e^{-t}) \end{align*} \begin{align*} \prod_{\alpha \in \Delta^+} (e^{\frac{\alpha}{2}} - e^{\frac{-\alpha}{2}})(e^{t \check{\rho}}) &= \prod_{\alpha \in \Delta^+}  (e^{t \langle \check{\rho}, \frac{\alpha}{2} \rangle} - e^{- t \langle \check{\rho}, \frac{\alpha}{2} \rangle})  \\  &= \prod_{\alpha \in \Delta^+} (t \langle \check{\rho}, \alpha \rangle + \frac{t^3 {\langle \check{\rho}, \alpha \rangle}^3}{24} + \cdots) \\ &= \{ \prod_{\alpha \in \Delta^+} \langle \check{\rho}, \alpha \rangle \}  t^{|\Delta^+|} (1 + t^2 C + \cdots) \end{align*} \begin{align*} \text{LHS} &= e^{t \langle \lambda, \check{\rho} \rangle} \{ \tilde{q}_M((1-e^{-t})^{-1}) + r_M(e^{-t}) \} \{ \prod_{\alpha \in \Delta^+} \langle \check{\rho}, \alpha \rangle \}  t^{|\Delta^+|} (1 + t^2 C + \cdots)  \\ &= \{ d! \text{ LC(M)} t^{-d} (1 + \cdots) \} \{ \prod_{\alpha \in \Delta^+} \langle \check{\rho}, \alpha \rangle \}  t^{|\Delta^+|} (1 + t \langle \lambda, \check{\rho} \rangle + \cdots) \\ &= d! \text{ LC(M)} \prod_{\alpha \in \Delta^+} \langle \check{\rho}, \alpha \rangle \; t^{|\Delta^+| - d} (1 + \cdots) \end{align*} Above, we have used the Taylor expansion $(1-e^{-t})^{-1} = t^{-1}(1 + \frac{t}{2} + \cdots)$ to obtain the leading coefficient of the Taylor expansion of $\tilde{q}_M((1-e^{-t})^{-1})$. \begin{align*} \sum_{w \in W} d_w e^{w(\lambda + \rho)} (e^{t \check{\rho}}) &= \sum_{w \in W} d_w e^{\langle w(\lambda + \rho), t \check{\rho} \rangle} \\ &= \sum_{w \in W, n \geq 0} d_w \frac{t^n}{n!} {\langle w(\lambda + \rho), \check{\rho} \rangle}^n \end{align*} Comparing the Taylor expansions of the LHS and the RHS, it follows that $\text{LC}(M) = c f_M^k(\check{\rho})$ for some constant $c$ and $k = |\Delta^+| - d$; and further $f_M^{i}(\check{\rho}) = 0$ for $i < k$. Now pick $\check{\rho}'$ arbitrary satisfying $\langle \check{\rho}', \alpha \rangle \in \mathbb{Z}_{>0}$ for simple roots $\alpha$. Repeating the whole argument, we deduce that $f_M^{i}(\check{\rho}') = 0$ for any such $\check{\rho}'$. It then follows that $f_M^i = 0$ for $i<k$, completing the proof. \end{proof}

\begin{proof}[Proof of Proposition \ref{Z}] Suppose that $$ \text{ch}(M) =  \frac{ \sum_{w \in W} a_w e^{w \rho}}{ \prod_{\alpha \in \Delta^+} (e^{\alpha / 2} - e^{-\alpha / 2})}$$ Then, since $T_{0 \rightarrow \lambda} \Delta(w \cdot 0) = \Delta(w \cdot \lambda)$: $$\text{ch}(T_{0 \rightarrow \lambda} M) =  \frac{ \sum_{w \in W} a_w e^{w (\rho+\lambda)}}{ \prod_{\alpha \in \Delta^+} (e^{\alpha / 2} - e^{-\alpha / 2}) }$$ It is clear that $\text{GK}(T_{0 \rightarrow \lambda} M) = \text{GK}(M) = d$. Let us define the polynomial $$Z(\lambda)([M]) = c \sum_{w \in W} a_w \{(w (\rho + \lambda)(\check{\rho})\}^{|\Delta^+| - d}$$ Using Proposition \ref{taylor}, it follows that $\text{LC}(T_{0 \rightarrow \lambda} M) = Z(\lambda)([M])$ for $\lambda \in \Lambda^+$, proving the first claim. The uniqueness of the polynomial is clear once its values on the lattice $\Lambda^+$ are specified.

Next, using Lemma \ref{WGroth}, we have: \begin{align*} \text{ch}(\Phi(\widetilde{y}) M) &= \frac{ \sum_{w \in W} a_w e^{(wy) \rho}}{ \prod_{\alpha \in \Delta^+} (e^{\alpha / 2} - e^{-\alpha / 2})} \\ &= \frac{ \sum_{w \in W} a_{wy^{-1}} e^{w \rho}}{ \prod_{\alpha \in \Delta^+} (e^{\alpha / 2} - e^{-\alpha / 2})} \\ Z(y^{-1} \cdot \lambda)([\Phi(\widetilde{y}) M]) &= c \sum_{w \in W} a_{wy^{-1}} \{w (\rho + y^{-1} \cdot \lambda)(\check{\rho})\}^{|\Delta^+| - d} \\ &= c \sum_{w \in W} a_{wy^{-1}} \{wy^{-1} (\lambda + \rho)(\check{\rho})\}^{|\Delta^+| - d} \\ &= c \sum_{w \in W} a_{w} \{w (\lambda + \rho)(\check{\rho})\}^{|\Delta^+| - d} = Z(\lambda)([M]) \end{align*} This proves the statement of the second claim. \end{proof}

\begin{remark} \label{firstpart} At this point, we are almost ready to check the first condition involving real variations of stability conditions. Suppose $M$ lies in the heart of the t-structure $\tau(w)$ (i.e. $M = \Phi(\tilde{w})(M')$ for some $M' \in \mathcal{O}^0_d$), and that $\lambda$ lies inside the alcove $\underline{w}$ (i.e. $\lambda = w \cdot \lambda'$ for some $\lambda'$ lying inside the alcove $\underline{1}$). Then, using Proposition \ref{Z}: \begin{align*} \langle Z(\lambda)[M] \rangle &= Z(w \cdot \lambda') [\Phi(\tilde{w})(M')] \\ &= Z(\lambda')[M'] \end{align*} Now if $\lambda' \in \Lambda^+$, then $Z(\lambda')[M'] = \text{LC}(T_{0 \rightarrow \lambda'} M') \geq 0$ (by definition of leading coefficient). However, to rigorously show that this statement is true for arbitrary $\lambda'$ in the fundamental Weyl chamber, we will need more machinery. \end{remark} 

\section{Proof of Main Results} \label{3}

\subsection{Two-step filtrations.} In this section we will check that the second condition from the above Definition is satisfied.

Suppose $\underline{w}$ and $\underline{w'}$ are two adjacent alcoves separated by a hyperplane $H$ (i.e. $w' = s_{\alpha} w$, where $\alpha$ is a simple root); and suppose that $\underline{w'}$ lies above $\underline{w}$. Recall that $\mathcal{A}_{\underline{w}}$ denotes the heart of the t-structure $\tau(\underline{w})$. Denote by $\mathcal{A}_{\underline{w}, \underline{w'}}^n$ (resp. $\mathcal{A}_{\underline{w'}, \underline{w}}^n$) the sub-category consisting of objects $M \in \mathcal{A}_{\underline{w}}$ (resp. $M \in \mathcal{A}_{\underline{w'}}$) such that the function $f_M: \mathfrak{h}^* \rightarrow \mathbb{C}$ defined by $f_M(x) = \langle Z(x), [M] \rangle$ has a zero of order at least $n$ on $H$. The following Lemma allows us to reduce to the case where $w=1$, and  the two propositions give a very concrete descriptions of these sub-categories in that case.  

\begin{lemma} \label{easy} \begin{enumerate} \item $\mathcal{A}_{\underline{w'}, \underline{w}}^k = \Phi(\widetilde{w}) \mathcal{A}_{\underline{s_{\alpha}}, \underline{1}}^k$, and $\mathcal{A}_{\underline{w}, \underline{w'}}^k = \Phi(\widetilde{w}) \mathcal{A}_{\underline{1}, \underline{s_{\alpha}}}^k$ \item $\mathcal{A}_{\underline{w}, \underline{w'}}^k = \Phi(\widetilde{s_{\alpha}}) \mathcal{A}_{\underline{w'}, \underline{w}}^k$ (in particular, $\mathcal{A}_{\underline{s_{\alpha}}, \underline{1}}^k = \Phi(\widetilde{s_{\alpha}}) \mathcal{A}_{\underline{k}, \underline{s_{\alpha}}}^1$) \end{enumerate} \end{lemma}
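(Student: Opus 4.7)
The plan is to derive both parts as formal consequences of two equivariance properties built into the setup. First, by the definition of $\tau$, the hearts satisfy $\mathcal{A}_{\underline{u}} = \Phi(\widetilde{u}) \mathcal{A}_{\underline{1}}$, so the braid group translates all hearts in the family back to the tautological one. Second, Proposition~\ref{Z} provides the intertwiner $Z(x)[\Phi(\widetilde{u}) N] = Z(u \cdot x)[N]$, which will transport the vanishing condition defining $\mathcal{A}_{\ast,\ast}^{k}$ along the Weyl action on $\mathfrak{h}^{*}$.

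For part (1), I would first check the geometric fact that, in the coordinates $\mu = \lambda + \rho$, the alcove $\underline{u}$ is $u \bar{C}_{+}$, so the hyperplane $H$ separating $\underline{w}$ from $\underline{w'}$ is exactly the image under $w$ of the hyperplane $H_{0}$ separating $\underline{1}$ from $\underline{s_{\alpha}}$. Given $M \in \mathcal{A}_{\underline{w'}}$, I would write $M = \Phi(\widetilde{w}) M'$; the length relation implied by ``$\underline{w'}$ lies above $\underline{w}$'' is exactly what is needed for the canonical lifts in $\mathbb{B}_{W}$ to compose correctly, placing $M'$ in $\mathcal{A}_{\underline{s_{\alpha}}}$. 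Applying the intertwiner then yields $f_{M}(x) = f_{M'}(w^{-1} \cdot x)$, and since $H = w \cdot H_{0}$, the function $f_{M}$ has a zero of order $\geq k$ on $H$ if and only if $f_{M'}$ has a zero of order $\geq k$ on $H_{0}$. This gives the first identity; the second is identical after exchanging the roles of the two hearts on each side.

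Part (2) is a special instance of the same mechanism applied to a single simple reflection. Under the dot action, $s_{\alpha}$ stabilizes $H$ setwise (indeed $H$ is the fixed hyperplane of $s_{\alpha}$) and swaps the two adjacent alcoves. A direct computation with canonical lifts in $\mathbb{B}_{W}$ (using the same reducedness argument as in part (1)) shows $\Phi(\widetilde{s_{\alpha}}) \mathcal{A}_{\underline{w'}} = \mathcal{A}_{\underline{w}}$, and then the intertwiner gives $f_{\Phi(\widetilde{s_{\alpha}}) M}(x) = f_{M}(s_{\alpha} \cdot x)$, which has a zero of order $\geq k$ on $H$ if and only if $f_{M}$ does, since $s_{\alpha}$ fixes $H$.

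The one delicate point, and the main obstacle to a completely mechanical argument, is the bookkeeping of reduced decompositions and the corresponding canonical lifts in $\mathbb{B}_{W}$: the ``above'' hypothesis is precisely what is needed to ensure that the products of simple lifts that appear are reduced, and hence agree with the canonical lift of the product Weyl element. Once this is pinned down, the rest of the lemma reduces to purely formal substitution using Proposition~\ref{Z} and the definition of $\tau$.
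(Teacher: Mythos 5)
Your proposal is correct and takes essentially the same route as the paper: both parts are proved by writing objects of the relevant hearts as $\Phi(\widetilde{w})N$ (resp.\ $\Phi(\widetilde{s_{\alpha}})C$) with $N, C \in \mathcal{O}_0^d$ and using the $W$-equivariance of $Z$ from Proposition \ref{Z} to transport the order of vanishing from $H$ to the wall $H^{\alpha}$, which $s_{\alpha}$ fixes (giving part (2)). The only difference is emphasis: the paper treats the identification of hearts and hyperplanes as immediate from the definitions and does not dwell on the reduced-word/canonical-lift bookkeeping that you flag as the delicate point.
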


 \begin{proposition} \label{3.1}   The category $\mathcal{A}_{\underline{1}, \underline{s_{\alpha}}}^1 \subset \mathcal{A}_{\underline{1}} = \mathcal{O}_0^{d}$ consists of those objects $M \in \mathcal{O}_0^{d}$ which possess a filtration, with each quotient being a simple modules $L(w \cdot 0)$ with $l(w s_{\alpha}) = l(w) + 1$. \end{proposition}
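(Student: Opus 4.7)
The plan is to characterize $\mathcal{A}_{\underline{1}, \underline{s_\alpha}}^1$ by computing $f_M(\lambda) = \langle Z(\lambda), [M]\rangle$ directly on the integer points of $H$ using translation to the wall, and then extending to $H$ by polynomial density. The hyperplane in question is $H = \{\lambda : \langle \lambda + \rho, \check{\alpha}\rangle = 0\}$. Since $f_M$ is a polynomial on $\mathfrak{h}^*$, it vanishes on $H$ if and only if it vanishes on the Zariski-dense subset $\Lambda^+_{H,\mathrm{reg}} = \{\lambda \in \Lambda^+ : \langle \lambda+\rho,\check{\alpha}\rangle = 0, \; \langle \lambda+\rho,\check{\beta}\rangle > 0 \text{ for } \beta \in \Delta^+ \setminus \{\alpha\}\}$; for each such $\lambda$, Proposition \ref{Z} gives $f_M(\lambda) = \mathrm{LC}(T_{0 \to \lambda} M)$.

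I next invoke the standard behavior of translation to the wall (e.g.\ \cite{h}, Theorem 7.6): for $\lambda \in \Lambda^+_{H,\mathrm{reg}}$, $T_{0 \to \lambda} L(w \cdot 0)$ equals $L(w \cdot \lambda)$ when $l(w s_\alpha) = l(w) - 1$ and is zero when $l(w s_\alpha) = l(w) + 1$. For the forward inclusion: if $M \in \mathcal{O}_0^d$ admits a filtration with simple subquotients $L(w \cdot 0)$ satisfying $l(ws_\alpha) = l(w) + 1$, then exactness of $T_{0 \to \lambda}$ shows $T_{0 \to \lambda}M = 0$ for every $\lambda \in \Lambda^+_{H,\mathrm{reg}}$, hence $f_M$ vanishes on a dense subset of $H$, whence on all of $H$, so $M \in \mathcal{A}_{\underline{1}, \underline{s_\alpha}}^1$.

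For the reverse inclusion, write $[M] = \sum_{w \in W_d} n_w [L(w \cdot 0)]$ in $K^0(\mathcal{O}_0^d)$, where $W_d = \{w : \mathrm{GK}(L(w\cdot 0)) = d\}$ and the $n_w$ are non-negative composition multiplicities. Split this sum according to whether $l(ws_\alpha) = l(w) + 1$ or $l(ws_\alpha) = l(w) - 1$. The contribution from the first piece vanishes on $H$ by the forward direction, so for any $\lambda \in \Lambda^+_{H,\mathrm{reg}}$ we obtain
\[
0 \;=\; f_M(\lambda) \;=\; \sum_{w \in W_d,\; l(ws_\alpha) = l(w) - 1} n_w\, \mathrm{LC}(L(w\cdot\lambda)).
\]
Since each $w$ with $l(ws_\alpha) = l(w) - 1$ represents a distinct right coset in $W/\langle s_\alpha\rangle$, the simples $L(w\cdot\lambda)$ are pairwise non-isomorphic; since $T_{0\to\lambda}$ preserves Gelfand--Kirillov dimension on the surviving simples, each $L(w\cdot\lambda)$ is a nonzero simple object of $\mathcal{O}_\lambda^d$, and its $\mathrm{LC}$ is therefore strictly positive by the definition in Proposition \ref{quasipoly}. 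The $n_w$ being non-negative, every $n_w$ with $l(ws_\alpha) = l(w) - 1$ must vanish; a Jordan--H\"older filtration of $M$ inside $\mathcal{O}_0^d$ now provides the required filtration.

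The main obstacle is the last step: it requires combining strict positivity of $\mathrm{LC}$ on nonzero simples of $\mathcal{O}_\lambda^d$, preservation of Gelfand--Kirillov dimension under $T_{0 \to \lambda}$ on the surviving simples, and the positivity of composition multiplicities $n_w$ to convert a single linear relation on $H$ into the vanishing of the individual $n_w$. A smaller but non-trivial point is the Zariski density of $\Lambda^+_{H,\mathrm{reg}}$ in $H$, which follows because $\Lambda \cap H$ has full rank in $H$ and the dominance conditions other than $\langle -,\check{\alpha}\rangle \geq 0$ cut out a full-dimensional open cone.
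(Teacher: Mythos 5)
Your proof is correct, and the reverse inclusion is essentially identical to the paper's: decompose $[M]$ into composition factors, observe that the factors with $l(ws_\alpha)=l(w)+1$ contribute nothing on $H$, and use strict positivity of $\mathrm{LC}(L(w\cdot\lambda))$ for the surviving factors together with non-negativity of the multiplicities to force those multiplicities to vanish. The forward inclusion is where you genuinely diverge. The paper first proves (its Lemma \ref{weak}) that $T_{0\to\mu}M=0$ for $\mu$ on the wall implies $\Phi(\tilde s_\alpha)M\simeq M[1]$, and then gets vanishing of $f_M$ on \emph{all} of $H$ in one stroke from the $W$-equivariance of $Z$ established in Proposition \ref{Z}: for $x$ fixed by $s_\alpha$, $f_M(x)=\langle Z(s_\alpha\cdot x),[\Phi(\tilde s_\alpha)M]\rangle=-f_M(x)$. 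You instead evaluate $f_M$ at integral points of $H$ via $T_{0\to\lambda}M=0$ and conclude by Zariski density. Both routes rest on the same vanishing of translation to the wall, but note that your version quietly uses the identity $f_M(\lambda)=\mathrm{LC}(T_{0\to\lambda}M)$ at \emph{singular} dominant integral $\lambda$, whereas Proposition \ref{Z} as stated only asserts it for $\lambda\in\Lambda^+$ regular; this extension is true (it follows by rerunning the character/Taylor-expansion argument of Proposition \ref{taylor}, since $T_{0\to\lambda}\Delta(w\cdot 0)=\Delta(w\cdot\lambda)$ still holds at the wall), and the paper itself needs the same extension in its reverse direction, but you should flag it rather than cite Proposition \ref{Z} verbatim. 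The trade-off: the paper's equivariance argument avoids any density or wall-extension issue in the forward direction, while yours avoids invoking the derived shift $\Phi(\tilde s_\alpha)M\simeq M[1]$ and stays entirely at the level of characters and translation functors.
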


\begin{proposition} \label{3.2} For $n \geq 2$, the categories $\mathcal{A}_{\underline{w}, \underline{w'}}^n = \{ 0 \}$. \end{proposition}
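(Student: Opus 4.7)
The plan is to apply Lemma \ref{easy} to reduce to the case $\underline{w} = \underline{1}$, $\underline{w'} = \underline{s_\alpha}$ for a simple root $\alpha$, and then (since $\mathcal{A}_{\underline{1}, \underline{s_\alpha}}^n \subseteq \mathcal{A}_{\underline{1}, \underline{s_\alpha}}^2$ for $n \geq 2$) show that $\mathcal{A}_{\underline{1}, \underline{s_\alpha}}^2 = \{0\}$. Suppose, for contradiction, that $M$ is a non-zero object of $\mathcal{A}_{\underline{1}, \underline{s_\alpha}}^2$. Then in particular $M \in \mathcal{A}_{\underline{1}, \underline{s_\alpha}}^1$, so by Proposition \ref{3.1} it carries a filtration whose successive quotients are simples $L(y \cdot 0)$ with $l(y s_\alpha) = l(y) + 1$.

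The heart of the argument is to identify, for $\mu \in \Lambda$ with $\langle \mu + \rho, \check{\alpha}\rangle = 0$ and $\langle \mu + \rho, \check{\beta}\rangle > 0$ for all $\beta \in \Delta^+ \setminus \{\alpha\}$, the first transverse Taylor coefficient $\partial_{\check{\alpha}} f_M(\mu)$ of $f_M$ at the wall $H$ with the leading coefficient $\text{LC}(T_{0 \to \mu} M)$ of the translation to the wall, up to a fixed non-zero scalar. Writing $[M] = \sum_w a_w [\Delta(w \cdot 0)]$ so that $\text{ch}(T_{0 \to \mu} M) = \frac{\sum_w a_w e^{w(\mu + \rho)}}{\prod_\beta (e^{\beta/2} - e^{-\beta/2})}$, one observes that the numerator vanishes on $H$ because $s_\alpha$ fixes $\mu + \rho$; one then adapts the Taylor-expansion argument of Proposition \ref{taylor} by dividing out the pole $(e^{\alpha/2} - e^{-\alpha/2})$ from the Weyl denominator and tracking the first surviving term. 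This is where the theory of harmonic polynomials enters: after this division, the numerator restricted to $H$ becomes a polynomial whose evaluation at the ``reduced'' half-sum of coroots of the stabilized root system extracts $\text{LC}(T_{0 \to \mu} M)$ via the $\mathcal{O}_\mu^{d-1}$-analogue of Proposition \ref{taylor}.

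The remaining inputs are that $T_{0 \to \mu} M$ is non-zero in $\mathcal{O}_\mu^{d-1}$ with strictly positive leading coefficient. Non-vanishing follows from the classical identity $T_{0 \to \mu} L(y \cdot 0) = L(y \cdot \mu) \neq 0$ when $l(y s_\alpha) = l(y) + 1$; the GK-dimension drop by one can be verified using the parabolic-Verma description of Lemma \ref{gkdim} (or equivalently via two-sided Kazhdan--Lusztig cell theory); and positivity follows from the positivity axiom in $\mathcal{O}_\mu^{d-1}$, which is established analogously to Remark \ref{firstpart}. Combining these, $\partial_{\check{\alpha}} f_M(\mu) \neq 0$ for some such $\mu$, so $f_M$ does not vanish to order $\geq 2$ on $H$, yielding the desired contradiction and hence $M = 0$.

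The principal obstacle is proving the identity $\partial_{\check{\alpha}} f_M(\mu) = c_\alpha \cdot \text{LC}(T_{0 \to \mu} M)$: this requires extending Proposition \ref{taylor} to singular central characters, carefully handling how the pole of the Weyl denominator along $H$ interacts with the Taylor expansion, and using harmonic-polynomial theory to show the restricted numerator pairs non-trivially with the reduced $\check{\rho}$. A secondary but still delicate obstacle is to rigorously establish the GK-dimension drop under translation to the wall for the relevant simples; this is plausible by the classification of simples in $\mathcal{O}_\mu$ and could also be deduced from the general theory of associated varieties and two-sided cells.
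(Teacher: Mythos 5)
Your reduction to $\underline{w}=\underline{1}$, $\underline{w'}=\underline{s_\alpha}$ via Lemma \ref{easy} is fine, but the heart of your argument rests on a claim that is exactly backwards. By Proposition \ref{3.1}, a nonzero $M\in\mathcal{A}^1_{\underline{1},\underline{s_\alpha}}$ is filtered by simples $L(y\cdot 0)$ with $l(ys_\alpha)=l(y)+1$, and for such $y$ translation to the $\alpha$-wall \emph{annihilates} the simple: $T_{0\to\mu}L(y\cdot 0)=0$ (this is precisely what the proof of Lemma \ref{weak} establishes; the simples that survive translation to the wall are those with $l(ys_\alpha)=l(y)-1$, as used in the proof of Proposition \ref{3.1}). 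Hence $T_{0\to\mu}M=0$ for every object of $\mathcal{A}^1_{\underline{1},\underline{s_\alpha}}$, and your proposed identity $\partial_{\check\alpha}f_M(\mu)=c_\alpha\cdot\mathrm{LC}(T_{0\to\mu}M)$ cannot produce a nonzero transverse derivative -- the right-hand side vanishes identically on the category you are trying to kill. The secondary claims (GK-dimension dropping by exactly one under translation to the wall, positivity in $\mathcal{O}^{d-1}_\mu$) therefore never get off the ground, and the contradiction you aim for does not materialize.

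The paper's route is different and much shorter: Proposition \ref{taylorharmonic} shows that $f_M(x)=\langle Z(x),[M]\rangle$ is a harmonic polynomial (via the Taylor-expansion formula of Proposition \ref{taylor} and Proposition $6.4.4$ of \cite{cg}), Proposition \ref{nodoublezero} (quoting \cite{abm}) shows a nonzero harmonic polynomial cannot vanish to order two on a co-root hyperplane, and the positivity axiom guarantees $f_M\neq 0$ for $M\neq 0$ in the heart. You do invoke harmonic polynomials, but only in an auxiliary role (pairing a restricted numerator with a reduced $\check\rho$); the property you actually need from them is the global no-double-zero statement, applied directly to $f_M$ itself. If you want to salvage a translation-functor argument, you would need an invariant of $M$ finer than $T_{0\to\mu}M$ (which is zero here); the harmonicity argument sidesteps this entirely.
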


\begin{example} Before proving the above two propositions, let us re-visit our running example (with $\mathfrak{g} = \mathfrak{sl}_3$ and $d=1$) and verify them by hand in that case. From the calculations in Example \ref{ex3}, we deduce that the category $\mathcal{A}^1_{\underline{1}, \underline{s_1}}$ consists of modules in $\mathcal{O}_0^1$ which have a filtration whose sub-quotients are $L((s_1 s_2) \cdot 0)$ or $L(s_2 \cdot 0)$; and that the category $\mathcal{A}^1_{\underline{1}, \underline{s_2}}$ consists of modules in $\mathcal{O}_0^1$ which have a filtration whose sub-quotients are $L((s_2 s_1) \cdot 0)$ or $L(s_1 \cdot 0)$. This is consistent with Proposition \ref{3.1}. For each $M \in \mathcal{O}_0^1$, $f_M: \mathfrak{h}^* \rightarrow \mathbb{C}$ is a linear function, and hence cannot have a double zero on any hyperplane (unless $M=0$); this is consistent with Proposition \ref{3.2}. $\blacksquare$ \end{example} \begin{proof}[Proof of Lemma \ref{easy}] First let us prove $(1)$. Given an object $M \in \mathcal{C}$, recall that $f_M$ denotes the function on $\mathfrak{h}^*$ defined by $f_M(x) = \langle Z(x), [M] \rangle$. Denote by $\text{deg}(f_M|_H)$ the order of vanishing of the polynomial $f_M$ on the hyperplane $H$. \begin{align*} \mathcal{A}_{\underline{w}, \underline{w'}}^k &= \{ M \in \mathcal{A}_w \; | \; \text{deg}(f_M|_{H}) \geq k \} \\ &= \{ \Phi(\widetilde{w})N , N \in \mathcal{O}_0^d  \; | \; \text{deg}(f_{\Phi(\widetilde{w})N}|_{H}) \geq k \} \\ &= \{ \Phi(\widetilde{w})N , N \in \mathcal{O}_0^d  \; | \; \text{deg}(f_{N}|_{H^{\alpha}}) \geq k \} \\ &= \Phi(\widetilde{w}) \mathcal{A}_{\underline{1}, \underline{s_{\alpha}}}^k \end{align*} Here $H^{\alpha}$ denotes the wall $\langle \lambda + \rho, \check{\alpha} \rangle = 0$ separating $\Lambda^+$ and $s_{\alpha} \cdot \Lambda^+$. Above we have used the fact that $\text{deg}(f_{\Phi(\widetilde{w})N}|_{H}) = \text{deg}(f_{N}|_{H^{\alpha}})$, which follows from $W$-equivariance. Similarly, it follows that $\mathcal{A}_{\underline{w'}, \underline{w}}^k = \Phi(\widetilde{w}) \mathcal{A}_{\underline{s_{\alpha}}, \underline{1}}^k$

Using $(1)$, in order to show $(2)$ it suffices to prove that $\mathcal{A}_{\underline{s_{\alpha}}, \underline{1}}^k = \Phi(\widetilde{s_{\alpha}}) \mathcal{A}_{\underline{1}, \underline{s_{\alpha}}}^1$. This is clear, since: \begin{align*} \mathcal{A}_{\underline{s_{\alpha}}, \underline{1}}^k &= \{ \Phi(\widetilde{s_{\alpha}}) C, C \in \mathcal{O}_0^d \; | \; \text{deg}(f_{\Phi(\widetilde{s_{\alpha}}) C} |_{H^{\alpha}}) \geq k \} \\ &= \{ \Phi(\widetilde{s_{\alpha}}) C, C \in \mathcal{O}_0^d \; | \; \text{deg}(f_{C}|_{H^{\alpha}}) \geq k \} \\ &= \Phi(\widetilde{s_{\alpha}}) \mathcal{A}_{\underline{1}, \underline{s_{\alpha}}}^1 \\ \end{align*} Above, $\text{deg}(f_{\Phi(\widetilde{s_{\alpha}}) C} |_{H^{\alpha}}) =\text{deg}(f_{C} |_{H^{\alpha}})$ using $W$-equivariance (since $s_{\alpha}$ acts via reflecting about the hyperplane $H^{\alpha}$). \end{proof}

The following Lemma is well-known; see for instance Theorem 5.1 in Andersen-Stroppel's article \cite{as}. The notation used there is slightly different, so we include a proof for the reader's convenience.

\begin{lemma} \label{weak} Suppose $M$ has a filtration, with each successive quotient being a simple module $L(w \cdot 0)$ with $l(w s_{\alpha}) = l(w) + 1$. Then $\Phi(\widetilde{s_{\alpha}}) M = M[1]$ \end{lemma}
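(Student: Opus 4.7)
The plan is to show that the wall-crossing functor $R_\alpha = T_{\mu \to 0} T_{0 \to \mu}$ annihilates $M$. If $R_\alpha M = 0$, then the natural adjunction unit $M \to R_\alpha M$ is necessarily the zero map, and its cone is $M[1]$, which gives $\Phi(\widetilde{s_\alpha}) M = M[1]$ by definition of the braid action. Since $T_{0 \to \mu}$ is exact on $\mathcal{O}_0$, a short induction on the length of the given filtration reduces the task to verifying the single statement $T_{0 \to \mu} L(w \cdot 0) = 0$ whenever $l(ws_\alpha) = l(w) + 1$.

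The heart of the argument is a classical BGG-embedding computation. The hypothesis $l(ws_\alpha) = l(w) + 1$ says $ws_\alpha > w$ in the Bruhat order, so BGG provides a Verma embedding $\Delta(ws_\alpha \cdot 0) \hookrightarrow \Delta(w \cdot 0)$; write its cokernel as $Q$. The composition factors of the radical $\mathrm{rad}\,\Delta(w \cdot 0)$ are precisely the $L(x \cdot 0)$ with $x > w$, so $\Delta(ws_\alpha \cdot 0) \subseteq \mathrm{rad}\,\Delta(w \cdot 0)$, and in particular $L(w \cdot 0)$ remains the simple head of $Q$. I then apply the exact functor $T_{0 \to \mu}$ to the short exact sequence
\begin{equation*}
0 \to \Delta(ws_\alpha \cdot 0) \to \Delta(w \cdot 0) \to Q \to 0.
\end{equation*}
Since $\mu$ lies on the $\alpha$-wall, $s_\alpha \cdot \mu = \mu$, hence $ws_\alpha \cdot \mu = w \cdot \mu$; the standard formula $T_{0 \to \mu}\Delta(x\cdot 0) = \Delta_\mu(x \cdot \mu)$ then yields $T_{0 \to \mu} \Delta(ws_\alpha \cdot 0) = \Delta_\mu(w \cdot \mu) = T_{0 \to \mu} \Delta(w \cdot 0)$. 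Computing in the Grothendieck group of $\mathcal{O}_\mu$, this forces $[T_{0 \to \mu} Q] = 0$; because $\mathcal{O}_\mu$ is of finite length and the simple classes form a basis of $K^0(\mathcal{O}_\mu)$, we conclude $T_{0 \to \mu} Q = 0$, and therefore its quotient $T_{0 \to \mu} L(w \cdot 0)$ vanishes as well.

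The main obstacle is to make this last vanishing step fully rigorous: after translation one obtains an injection $\Delta_\mu(w \cdot \mu) \hookrightarrow \Delta_\mu(w \cdot \mu)$ between two copies of the same Verma, and one must argue that its cokernel is genuinely zero rather than merely K-theoretically zero. The Grothendieck-group argument above handles this precisely. Once we have $T_{0 \to \mu} L(w \cdot 0) = 0$ for every simple in the filtration, exactness of $T_{0 \to \mu}$ propagates the vanishing through the filtration to give $T_{0 \to \mu} M = 0$, hence $R_\alpha M = 0$; then $\Phi(\widetilde{s_\alpha}) M = \mathrm{Cone}(M \to 0) = M[1]$ as claimed.
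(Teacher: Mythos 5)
Your proof is correct and follows essentially the same route as the paper: reduce by exactness and induction on the filtration to showing $T_{0\to\mu}L(w\cdot 0)=0$, then use the BGG embedding $\Delta(ws_\alpha\cdot 0)\hookrightarrow\Delta(w\cdot 0)$ and the fact that both Vermas translate to $\Delta_\mu(w\cdot\mu)$. The only (cosmetic) difference is in the last step, where you kill the cokernel by a Grothendieck-group and finite-length argument, while the paper observes that the translated embedding becomes an isomorphism so the surjection onto $T_{0\to\mu}L(w\cdot 0)$ must vanish.
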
 \begin{proof} Fix $\mu$, such that $\langle \mu + \rho, \check{\alpha} \rangle = 0, \langle \mu + \rho, \check{\beta} \rangle = 0$. By definition, we need to show that $\text{Cone}(M \rightarrow R_{\alpha} M) = M[1]$ (where $R_{\alpha} = T_{\mu \rightarrow 0} T_{0 \rightarrow \mu}$). So it suffices to show that $R_{\alpha} M = 0$. We will show the stronger statement that $T_{0 \rightarrow \mu} M = 0$. 

Suppose that $l(w s_{\alpha}) = l(w) + 1$. Then we have a sequence of maps: $$ \Delta(w s_{\alpha} \cdot 0) \overset{i}{\rightarrow} \Delta(w \cdot 0)  \overset{p}{\rightarrow} L(w \cdot 0) $$ The map $i$ is injective, and its existence follows using Proposition $1.4$ of \cite{h};  the map $p$ is clearly surjective. Since the image of $i$ lands inside the maximal submodule of $\Delta(w \cdot 0)$, the composition of these two maps is $0$. Applying the translation functor $T_{0 \rightarrow \mu}$ to this triangle, we get: $$ \Delta(w s_{\alpha} \cdot \mu) \overset{i_T}{\rightarrow} \Delta(w \cdot \mu)  \overset{p_T}{\rightarrow} T_{0 \rightarrow \mu} L(w \cdot 0) $$ However, since $\langle \mu + \rho, \check{\alpha} \rangle = 0$, it follows that $s_{\alpha} \cdot \mu = \mu$; and hence the map $i_T$ is an isomorphism. Since the composition of the two maps is $0$, and $p_T$ is surjective, it follows that $T_{0 \rightarrow \lambda} L(w \cdot 0) = 0$.

Now suppose $M$ has a filtration by such modules $L(w \cdot 0)$. By using the exact-ness of the functor $T_{0 \rightarrow \lambda}$, and inducting on the length $l(M)$ of this filtration, it follows that $T_{0 \rightarrow \lambda} M = 0$. \end{proof}

\begin{proof}[Proof of Proposition \ref{3.1}] First we prove that any module $M$, with such a filtration lies inside $\mathcal{A}_{\underline{1}, \underline{s_{\alpha}}}^1$.  Suppose $\langle x + \rho, \alpha \rangle=0$; then $s_{\alpha} \cdot x = x$, so: \begin{align*} \langle Z(x), [M] \rangle &= \langle Z( s_{\alpha} \cdot x), [\Phi(s_{\alpha}) M] \rangle \\  &= \langle Z(x), [M[1]] \rangle \\  &= - \langle Z(x), [M] \rangle \\ \Rightarrow \langle Z(x), [M] \rangle &= 0 \end{align*} Then we have that, $\langle Z(x), [M] \rangle = 0$ if $\langle x + \rho, \check{\alpha} \rangle = 0$, and so $M \in \mathcal{A}_{\underline{1}, \underline{s_{\alpha}}}^1$ by definition.

Next let us prove that any module $M \in \mathcal{A}_{\underline{1}, \underline{s_{\alpha}}}^1$ possesses such a filtration. Suppose the Jordan-Holder filtration of $M$ contains the simple module $L(w \cdot 0)$ with multiplicity $m_w$, so that \begin{align*} \langle Z(x), [M] \rangle &= \langle Z(x), \sum_{w \in W} m_w [L (w \cdot 0)] \rangle \\ &= \sum_{w \in W} m_w \langle Z(x), [L(w \cdot 0)] \rangle \end{align*} Now pick $\lambda$ to be integral such that $\langle \lambda + \rho, \check{\alpha} \rangle = 0$, but $\langle \lambda + \rho, \check{\beta} \rangle > 0$ for all simple roots $\beta \neq \alpha$ (so $\lambda$ lies in the closure of the dominant alcove). Then we claim that $\langle Z(\lambda), L(w \cdot 0) \rangle > 0$ if $l(w s_{\alpha}) = l(w) - 1$.  The desired result would follow. 

 It is known (for instance, see Section $7.7$ of \cite{h}) that $T_{0 \rightarrow \lambda}$ sends an irreducible module either to $0$, or to another irreducible; by counting the number of irreducibles it follows that $T_{0 \rightarrow \lambda} L(w \cdot 0) = L(w \cdot \lambda)$. By using the techniques employed in the proof of Proposition \ref{taylor}, it follows that $L(w \cdot \lambda)$ and $L(w \cdot 0)$ have the same Gelfand-Kirillov dimension. Further, we deduce that $\langle Z(\lambda), L(w \cdot 0) \rangle = \text{LC}(L(w \cdot \lambda)) >0$, as required. \end{proof}  

In order to prove Proposition \ref{3.2}, we appeal to the theory of harmonic polynomials. For a detailed exposition, we refer the reader to Section $6.3$ and $6.4$ of \cite{cg}. 

\begin{definition} A polynomial function $f: \mathfrak{h}^* \rightarrow \mathbb{C}$ is a ``harmonic polynomial"' if for every $\partial \in \mathcal{D}(\mathfrak{h}^*)_{+}^{W}$, $\partial f = 0$. \end{definition}

\begin{proposition} \label{taylorharmonic} Fix an object $M \in \mathcal{C}$. The function $f_M: \mathfrak{h}^* \rightarrow \mathbb{C}$, given by $f_M(x) = \langle Z(x), [M] \rangle$ is a harmonic polynomial. \end{proposition}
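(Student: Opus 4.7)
The plan is to combine the explicit formula for the central charge from the proof of Proposition \ref{Z} with the vanishing of low-order Taylor components established at the end of the proof of Proposition \ref{taylor}. Writing $[M] = \sum_{w \in W} a_w [\Delta(w \cdot 0)]$ in the Grothendieck group of $\mathcal{O}_0^{\leq d}$ and setting $k = |\Delta^+| - d$, the formula from the proof of Proposition \ref{Z} gives
\[
f_M(\lambda) \;=\; \langle Z(\lambda), [M]\rangle \;=\; c \sum_{w \in W} a_w\, \langle w(\rho+\lambda), \check\rho\rangle^{k}
\]
for some constant $c$. After substituting $\mu = \rho + \lambda$, it suffices to prove that the polynomial
\[
Q(\mu) \;:=\; \sum_{w \in W} a_w\, \langle w\mu, \check\rho\rangle^{k} \;=\; \sum_{w \in W} a_w\, \langle \mu, w^{-1}\check\rho\rangle^{k}
\]
on $\mathfrak{h}^*$ is harmonic, since constant-coefficient differential operators are translation-invariant.

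The key input from Proposition \ref{taylor} is the following: applying that proposition to the translated modules $T_{0 \to \lambda} M$ (each of Gelfand-Kirillov dimension at most $d$), the final step of its proof yields the polynomial identity
\[
\sum_{w \in W} a_w\, \langle w\mu, x\rangle^{n} \;=\; 0 \qquad \text{for all } (\mu, x) \in \mathfrak{h}^* \times \mathfrak{h}, \ n < k.
\]
The identity initially holds only for $\mu = \rho + \lambda$ with $\lambda \in \Lambda^+$, but since both sides are polynomials in $\mu$ and $\Lambda^+$ is Zariski dense in $\mathfrak{h}^*$, it extends to all $\mu$.

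To finish, let $I \in S(\mathfrak{h}^*)^W$ be a $W$-invariant polynomial of positive degree $m$, and let $D_I$ denote the corresponding constant-coefficient differential operator on $\mathbb{C}[\mathfrak{h}^*]$. A routine computation gives
\[
D_I\, \langle \mu, \xi\rangle^{k} \;=\; \frac{k!}{(k-m)!}\, I(\xi)\, \langle \mu, \xi\rangle^{k-m}
\]
for each $\xi \in \mathfrak{h}$. Applying this with $\xi = w^{-1}\check\rho$ and using the $W$-invariance $I(w^{-1}\check\rho) = I(\check\rho)$, we obtain
\[
D_I\, Q(\mu) \;=\; \frac{k!\, I(\check\rho)}{(k-m)!} \sum_{w \in W} a_w\, \langle \mu, w^{-1}\check\rho\rangle^{k-m}.
\]
Since $m \geq 1$, the exponent $k - m$ is strictly less than $k$, and the preceding identity (with $x = \check\rho$) shows the right-hand side vanishes. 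Hence $D_I Q = 0$ for every $W$-invariant $I$ of positive degree, proving that $Q$ (and therefore $f_M$) is harmonic.

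I do not foresee a serious obstacle; the argument is a short explicit calculation exploiting both the $W$-invariance of $D_I$ and the fact that summing over $w$ carries $w^{-1}\check\rho$ around its $W$-orbit. The only subtle point is insisting that the vanishing of Taylor components from Proposition \ref{taylor} be read as a polynomial identity in $\mu$ rather than merely as a pointwise statement on $\Lambda^+$, which is handled by Zariski density.
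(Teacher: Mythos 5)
Your argument is correct, and it follows the same overall reduction as the paper: express $f_M(\lambda)=c\sum_{w}a_w\langle w(\rho+\lambda),\check\rho\rangle^{k}$ via the formula from the proof of Proposition \ref{Z}, and invoke the vanishing $\sum_w a_w\langle w\mu,x\rangle^{n}=0$ for $n<k$ coming from the end of the proof of Proposition \ref{taylor} (your point about upgrading this from $\mu=\rho+\lambda$, $\lambda\in\Lambda^+$, to all $\mu$ by Zariski density is the right way to make that step precise). The difference is in how harmonicity of the lowest nonvanishing component is then established: the paper delegates this to a Lemma whose proof cites Proposition $6.4.4$ of Chriss--Ginzburg together with shift-invariance of harmonic polynomials, whereas you prove it directly via the identity $D_I\langle\mu,\xi\rangle^{k}=\frac{k!}{(k-m)!}I(\xi)\langle\mu,\xi\rangle^{k-m}$ and the $W$-invariance $I(w^{-1}\check\rho)=I(\check\rho)$, which collapses $D_IQ$ to a lower-degree sum that vanishes. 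In effect you have reproven the cited Chriss--Ginzburg statement (that the lowest-order Taylor component of $\sum_w a_w e^{\langle w\mu,\cdot\rangle}$ is harmonic) in the special case needed here; this makes the argument self-contained at the cost of a short computation, and it also makes transparent exactly where the vanishing of the lower components is used.
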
 

 \begin{proposition} \label{nodoublezero} Any non-zero harmonic polynomial cannot have a double zero on a co-root hyperplane. \end{proposition}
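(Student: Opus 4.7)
The plan is to use only one particular element of $\mathcal{D}(\mathfrak{h}^*)^W_+$, namely the Laplacian associated to a $W$-invariant inner product, and to obtain a contradiction by examining the lowest-order term of $\Delta f$ near $H_\alpha$. Concretely, I would pick a $W$-invariant positive-definite symmetric bilinear form $B$ on $\mathfrak{h}^*$ (which exists because $W$ is finite), and choose $B$-orthonormal coordinates $x_1, \dots, x_n$ on $\mathfrak{h}^*$ such that the linear functional $\check\alpha$ is a nonzero scalar multiple of $x_1$; this is possible because $H_\alpha$ is the $B$-orthogonal complement of a one-dimensional subspace of $\mathfrak{h}^*$. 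The associated Laplacian $\Delta = \partial_1^2 + \cdots + \partial_n^2$ is $W$-invariant (since $W$ permutes $B$-orthonormal bases, $\Delta$ is independent of the choice of such basis) and lies in $\mathcal{D}(\mathfrak{h}^*)^W_+$, so any harmonic polynomial $f$ satisfies $\Delta f = 0$.

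Now suppose for contradiction that $f$ is a nonzero harmonic polynomial vanishing to order at least $2$ on $H_\alpha = \{x_1 = 0\}$. Let $k \geq 2$ be the exact order of vanishing, and write $f = x_1^k g$ where $g$ is a polynomial with $g|_{x_1 = 0} \neq 0$. A direct computation using the Leibniz rule gives
\[
\Delta f \;=\; k(k-1)\,x_1^{k-2} g \;+\; 2k\,x_1^{k-1}\,\partial_1 g \;+\; x_1^k\,\Delta g.
\]
The lowest power of $x_1$ occurring on the right-hand side is $x_1^{k-2}$, and its coefficient is $k(k-1)\,g|_{x_1 = 0}$. Since $k \geq 2$ and $g|_{x_1 = 0} \neq 0$, this coefficient is nonzero, contradicting $\Delta f = 0$.

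The argument reduces to a one-line computation once the coordinates have been chosen, so the only real set-up step, and the mildest obstacle, is the existence and $W$-invariance of $\Delta$; both follow automatically from the $W$-invariance of $B$. I note that the proof actually establishes the sharper statement that any nonzero harmonic polynomial vanishes to order at most $1$ on a co-root hyperplane, which is consistent with the observation that $\check\alpha$ itself is a harmonic polynomial of degree one cutting out $H_\alpha$ with a simple zero.
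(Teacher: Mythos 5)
Your argument is correct. The paper does not actually prove this proposition itself --- it simply cites the last paragraph of Proposition 1 of \cite{abm} --- so your self-contained Laplacian argument is a genuine addition, and it is the standard way to see this fact: the $W$-invariant Laplacian $\Delta$ lies in $\mathcal{D}(\mathfrak{h}^*)^W_+$, and your Leibniz computation correctly shows that $\Delta(x_1^k g)$ has a nonzero coefficient $k(k-1)\,g|_{x_1=0}$ in degree $k-2$ of $x_1$ whenever $k\ge 2$ and $x_1\nmid g$. Two minor points worth recording: first, the hyperplanes actually used in the paper are the affine ones $\langle\lambda+\rho,\check\alpha\rangle=0$ rather than linear ones, but since $\Delta$ has constant coefficients the identical computation applies with $x_1$ replaced by the affine form $x_1+c$ (equivalently, harmonicity is preserved by translation, as the paper itself notes); second, ``orthonormal coordinates'' should be chosen on the real form $\mathfrak{h}^*_{\mathbb{R}}$, where the $W$-invariant form is positive definite and $\check\alpha$ is real, and then extended to $\mathbb{C}$, so that $B(\check\alpha,\check\alpha)\neq 0$ and the normalization is legitimate.
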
 

\begin{example} Before proving these two propositions, let us revisit the example $\mathfrak{g} = \mathfrak{sl}_3$. First let us calculate the invariant differential operators, and the harmonic polynomials in this case. Define $X_1, X_2: \mathfrak{h}^* \rightarrow \mathbb{C}$ by setting $X_1(\lambda) = \langle \lambda, \check{\alpha_1} \rangle, X_2(\lambda) = \langle \lambda, \check{\alpha_2} \rangle$; then the set of polynomial functions from $\mathfrak{h}^*$ to $\mathbb{C}$ can be naturally identified by $\mathbb{C}[X_1, X_2]$. We compute that: \begin{align*} s_1 \frac{\partial}{\partial X_1} &= - \frac{\partial}{\partial X_1} + \frac{\partial}{\partial X_2}, \qquad s_2\frac{\partial}{\partial X_1} = \frac{\partial}{\partial X_1} \\ s_1\frac{\partial}{\partial X_2} &= \frac{\partial}{\partial X_2}, \qquad \qquad s_2 \frac{\partial}{\partial X_2} = \frac{\partial}{\partial X_1} - \frac{\partial}{\partial X_2} \\ \mathcal{D}(\mathfrak{h}^*)^W_{+} &= \langle (\frac{\partial}{\partial X_1})^2 + (\frac{\partial}{\partial X_2})^2 - \frac{\partial}{\partial X_1} \frac{\partial}{\partial X_2}, (\frac{\partial}{\partial X_1})^2 \frac{\partial}{\partial X_2} - \frac{\partial}{\partial X_1} (\frac{\partial}{\partial X_2})^2 \rangle \end{align*} So the space $\mathcal{H}$ of harmonic polynomials are those annihilated by those two polynomials: \begin{align*} \mathcal{H} = \mathbb{C} \{ X_1^2 X_2 + X_1 X_2^2, X_1^2 + 2 X_1 X_2, X_2^2 + 2X_1 X_2, X_1, X_2, 1 \} \end{align*} This is consistent with Proposition \ref{taylorharmonic}, since when $M$ is a simple module in $\mathcal{O}_0^1$, $f_M(x) = X_1 + 1$ or $f_M(x) = X_2 +1$; in either case $f_M \in \mathcal{H}$. It is easy to verify that no element of $\mathcal{H}$ is divisible by $(X_1 + 1)^2$ or $(X_2+1)^2$; this is consistent with Proposition \ref{nodoublezero}. $\blacksquare$ \end{example}

\begin{proof}[Proof of Proposition \ref{taylorharmonic}] This follows from Proposition \ref{taylor}, combined with the following Lemma. \end{proof}  

\begin{lemma} Given a collection $\{ a_w \}_{w \in W}$ of complex numbers, let $d$ be minimal such that $$ R_d := \sum_{w \in W} a_w  \frac{ {w(\rho + \lambda)}^d}{d!}$$ is a non-zero function. Then $R_d$ is a harmonic polynomial. \end{lemma}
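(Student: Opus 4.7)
The plan is to realize $R_d$ as the lowest non-vanishing Taylor coefficient of the exponential generating function
\[ F(x) := \sum_{w \in W} a_w \, e^{\langle w(\rho+\lambda), x\rangle}, \]
so that the harmonicity of $R_d$ will become an immediate consequence of $W$-equivariance together with the minimality hypothesis. Expanding each exponential at the origin yields $F = \sum_{n \geq 0} R_n$, with $R_n(x) = \sum_w a_w \langle w(\rho+\lambda), x\rangle^n / n!$, which matches the quantity in the statement (interpreted as a polynomial function on $\mathfrak{h}^*$ via the Killing-form identification $\mathfrak{h}^* \cong \mathfrak{h}$ that underlies the definition of harmonic polynomial used here). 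The hypothesis says precisely that $R_d$ is the lowest-degree non-zero homogeneous component of $F$.

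The second step is to compute $\partial F$ for an arbitrary $\partial \in \mathcal{D}(\mathfrak{h}^*)^W_+$, with symbol $P_\partial$ a $W$-invariant homogeneous polynomial of some positive degree $e$. The standard Fourier-theoretic identity $\partial \, e^{\langle \nu, \cdot\rangle} = P_\partial(\nu) \, e^{\langle \nu, \cdot\rangle}$, combined with the $W$-invariance $P_\partial(w\nu) = P_\partial(\nu)$, yields
\[ \partial F = \sum_{w \in W} a_w \, P_\partial(w(\rho+\lambda)) \, e^{\langle w(\rho+\lambda), x\rangle} = P_\partial(\rho+\lambda) \cdot F, \]
so $F$ is an eigenfunction of $\partial$ with scalar eigenvalue $P_\partial(\rho+\lambda)$ independent of $x$. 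This is the key structural identity.

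The third step is to match Taylor coefficients on both sides. Since $\partial$ lowers polynomial degree by exactly $e$, the degree-$k$ part of $\partial F$ equals $\partial R_{k+e}$, while the degree-$k$ part of $P_\partial(\rho+\lambda) F$ equals $P_\partial(\rho+\lambda) R_k$. The eigenfunction identity therefore reads
\[ \partial R_{k+e} = P_\partial(\rho+\lambda) \cdot R_k \qquad (k \geq 0). \]
Apply this with $k = d-e$. If $e > d$ then $\partial R_d$ has negative degree and vanishes automatically; otherwise $0 \leq d-e < d$, and the minimality of $d$ forces $R_{d-e} = 0$, again giving $\partial R_d = 0$. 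Since $\partial$ was arbitrary in $\mathcal{D}(\mathfrak{h}^*)^W_+$, this is exactly the statement that $R_d$ is harmonic.

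The argument is essentially Fourier-theoretic bookkeeping and I do not foresee a genuine obstacle. The one point requiring a moment of care is keeping the various identifications straight — viewing $R_d$ as a polynomial function on $\mathfrak{h}^*$ via the Killing form, and matching symbols of operators in $\mathcal{D}(\mathfrak{h}^*)^W$ with $W$-invariant polynomials — but the $W$-invariance of the Killing form makes all of this routine.
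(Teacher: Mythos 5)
Your proof is correct, and it takes a genuinely different route from the paper. The paper disposes of this lemma in two lines: it cites Proposition $6.4.4$ of Chriss--Ginzburg for the unshifted statement and then invokes the (true, but unstated-in-detail) fact that the space of $W$-harmonic polynomials is stable under translation to absorb the $\rho$-shift. You instead give a self-contained argument: the exponentials $e^{\langle w(\rho+\lambda),\cdot\rangle}$ are simultaneous eigenfunctions of every constant-coefficient operator, $W$-invariance of the symbol collapses the eigenvalues to the single constant $P_\partial(\rho+\lambda)$, and comparing homogeneous components gives $\partial R_{k+e} = P_\partial(\rho+\lambda)R_k$, which vanishes at $k=d-e$ by minimality. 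This is essentially the proof of the cited CG proposition itself, but your version buys two things: it needs no external reference, and the shift by $\rho$ costs nothing since it only changes the constant eigenvalue, so the separate ``harmonics are shift-stable'' step disappears. Your reduction to homogeneous $\partial$ is harmless since the homogeneous components of a $W$-invariant operator are again $W$-invariant. The one point worth flagging is a variable ambiguity in the statement itself: you prove harmonicity of $R_d$ as a function of the ``dual'' variable $x$ in $\langle w(\rho+\lambda),x\rangle$ with $\lambda$ fixed, whereas the paper's application (Proposition \ref{taylorharmonic}) needs harmonicity in $\lambda$ with $x=\check{\rho}$ fixed. Fortunately your argument is symmetric in the two slots: as a function of $\lambda$, $e^{\langle \rho+\lambda, w^{-1}\check{\rho}\rangle}$ is an eigenfunction with eigenvalue $P_\partial(w^{-1}\check{\rho})=P_\partial(\check{\rho})$, again independent of $w$, and the identical bookkeeping applies; it would be worth stating which reading you intend.
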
 \begin{proof} From Proposition $6.4.4$ in \cite{cg}, it follows that $R_d(\lambda - \rho)$ is a harmonic polynomial. The result now follows using the well-known fact that any harmonic polynomial is stable under shifts. \end{proof}

\begin{proof}[Proof of Proposition \ref{nodoublezero}] See the last paragraph of Proposition $1$ in \cite{abm} (on page $9$). \end{proof} 

\begin{proof}[Proof of Proposition \ref{3.2}] This follows using Propositions \ref{taylorharmonic} and \ref{nodoublezero}. \end{proof} 

Before returning to the proof of the Main Theorem, we will need the following three Lemmas (the first of which is a strengthening of Lemma \ref{weak}).

\begin{lemma} \label{strong} If $A \in \mathcal{C}$ satisfies $H^n(A)  \in \mathcal{A}^1_{\underline{1}, \underline{s}_{\alpha}} \; \forall \; n \in \mathbb{Z}$, then $\Phi(\tilde{s}_{\alpha})(A) \simeq A[1]$. \end{lemma}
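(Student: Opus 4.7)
The plan is to strengthen the conclusion of Lemma \ref{weak} by showing that the wall-crossing operator itself vanishes on $A$: namely, $R_{\alpha} A \simeq 0$ in $D^b(\mathcal{O}_0^d)$. From this the claim follows immediately, since by definition $\Phi(\tilde{s}_{\alpha})(A)$ fits into a functorial distinguished triangle
$$ A \longrightarrow R_{\alpha} A \longrightarrow \Phi(\tilde{s}_{\alpha})(A) \longrightarrow A[1], $$
and with the middle term zero, rotation gives $\Phi(\tilde{s}_{\alpha})(A) \simeq A[1]$.

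The key observation is that $R_{\alpha} = T_{\mu \to 0} T_{0 \to \mu}$ is exact as a functor on the abelian category $\mathcal{O}_0^d$. Indeed, both translation functors are exact on category $\mathcal{O}$, and they do not increase Gelfand-Kirillov dimension (tensoring with a finite-dimensional module and projecting to a block cannot raise GK dimension), so they descend to exact functors on the Serre quotients $\mathcal{O}_0^{\leq d}/\mathcal{O}_0^{<d}$ and $\mathcal{O}_\mu^{\leq d}/\mathcal{O}_\mu^{<d}$. Consequently the derived functor $R_{\alpha}\colon D^b(\mathcal{O}_0^d) \to D^b(\mathcal{O}_0^d)$ commutes with taking cohomology, i.e.\ $H^n(R_{\alpha} A) = R_{\alpha} H^n(A)$ for every $n$.

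It then suffices to check that $R_{\alpha}$ annihilates every $H^n(A)$. By Proposition \ref{3.1}, each $H^n(A) \in \mathcal{A}^1_{\underline{1}, \underline{s}_{\alpha}}$ carries a filtration whose successive quotients are simples $L(w \cdot 0)$ with $\ell(ws_{\alpha}) = \ell(w) + 1$. The proof of Lemma \ref{weak} establishes that $T_{0 \to \mu} L(w \cdot 0) = 0$ for each such simple, and exactness of $T_{0 \to \mu}$ together with a short induction on the length of the filtration yields $T_{0 \to \mu} H^n(A) = 0$, whence $R_{\alpha} H^n(A) = 0$. Combined with the preceding paragraph, $R_{\alpha} A$ has vanishing cohomology in every degree and is therefore zero in $D^b(\mathcal{O}_0^d)$, completing the argument.

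I do not anticipate any real obstacle here; the only point requiring a bit of care is the verification that $R_{\alpha}$ descends to an exact endofunctor of $\mathcal{O}_0^d$, which is a routine consequence of the fact that the translation functors preserve the Serre subcategories $\mathcal{O}^{<d} \subset \mathcal{O}^{\leq d}$. The strategy makes no essential use of the specific form of the filtration beyond Lemma \ref{weak} and Proposition \ref{3.1}; it is really just the observation that an exact functor on an abelian category which kills every simple appearing in the cohomology of a bounded complex kills the whole complex in the derived category.
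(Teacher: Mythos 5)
Your proof is correct, and it takes a genuinely different route from the paper. The paper argues by induction on the number of nonvanishing cohomology objects $l(A)=|\{i: H^i(A)\neq 0\}|$: the base case is exactly Proposition \ref{3.1} combined with Lemma \ref{weak}, and the inductive step applies $\Phi(\tilde{s}_{\alpha})$ to the truncation triangle $\tau_{\leq j}A \to A \to \tau_{\geq j+1}A$ and invokes the triangulated-category axioms to identify the middle term. You instead observe that $R_{\alpha}=T_{\mu\to 0}T_{0\to\mu}$ is exact and descends to the Serre quotient, hence commutes with cohomology on $D^b(\mathcal{O}_0^d)$, and that the proof of Lemma \ref{weak} already gives $T_{0\to\mu}H^n(A)=0$ for every $n$; therefore $R_{\alpha}A=0$ and $\Phi(\tilde{s}_{\alpha})A=\mathrm{Cone}(A\to 0)=A[1]$. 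Your version is arguably more robust: the paper's inductive step glosses over the fact that a distinguished triangle is not determined up to isomorphism by its outer terms alone (one must track the connecting morphism, or supply a Hom-vanishing argument), whereas your argument sidesteps cones of complexes entirely by killing the middle term of the defining triangle. The only points deserving explicit mention, which you flag, are that $R_{\alpha}$ preserves the GK filtration so that it induces an exact endofunctor of the abelian quotient $\mathcal{O}_0^d$ compatible with the braid action already defined on $D^b(\mathcal{O}_0^d)$, and that Proposition \ref{3.1} (used to produce the filtration by simples $L(w\cdot 0)$ with $\ell(ws_{\alpha})=\ell(w)+1$) is established independently of this lemma, so there is no circularity.
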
 \begin{proof} Given an object $A \in \mathcal{C}$, define the length $l(A) = | \{ i \in \mathbb{Z}, H^i(A) \neq 0 \} |$. Let us proceed by induction on $l(A)$. If $l(A) = 1$, the statement follows from Proposition $3.1$ and Lemma $3.4$. 

Now suppose that $l(A) = i$. We can pick $j$ so that $l(\tau_{\leq j} A), l(\tau_{\geq j+1} A) < i$ (here $\tau$ denotes truncation with respect to the standard $t$-structure on $\mathcal{C}$). Applying the automorphism $\Phi(\tilde{s}_{\alpha})(A)$, and using the induction hypothesis, we have distinguished triangles: \begin{align*} \tau_{\leq j} A &\rightarrow A \rightarrow \tau_{\geq j+1} A \\ \Phi(\tilde{s}_{\alpha}) \tau_{\leq j} A &\rightarrow \Phi(\tilde{s}_{\alpha}) A \rightarrow \Phi(\tilde{s}_{\alpha}) \tau_{\geq j+1} A \\ \tau_{\leq j} A[1] &\rightarrow \Phi(\tilde{s}_{\alpha}) A \rightarrow \tau_{\geq j+1} A[1] \end{align*} Now using the axioms of a triangulated category, it follows that $\Phi(\tilde{s}_{\alpha}) A \simeq A[1]$, as required. \end{proof}

We expect that the following result, in some form, is known to experts; however, we were unable to find it in the literature. 

\begin{lemma} \label{szero} \begin{itemize} \item Any $A \in \mathcal{C}$ satisfies $\Phi(\tilde{s}_{\alpha}) A \simeq A \text{ (mod } \mathcal{C}^1_{\underline{1}, \underline{s_{\alpha}}})$. \item Suppose $A$ is a simple, and $A \notin \mathcal{C}^1_{\underline{1}, \underline{s_{\alpha}}}$. Then we have a map $A \rightarrow \Phi(\tilde{s}_{\alpha}^{-1}) A$, with $A$ being the only simple sub-module of $\Phi(\tilde{s}_{\alpha}^{-1}) A$. \end{itemize} \end{lemma}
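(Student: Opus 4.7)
Both assertions rest on the standard structure theory of the wall-crossing functor $R_\alpha = T_{\mu \to 0} T_{0 \to \mu}$ applied to a simple module (cf.\ Chapter~8 of \cite{h}, or Andersen--Stroppel \cite{as}): for $A = L(w \cdot 0)$ with $l(ws_\alpha) < l(w)$, the module $R_\alpha A$ is a self-dual indecomposable object of $\mathcal{O}_0^d$ whose top and socle are each $\simeq A$, and whose middle layer $\mathrm{rad}(R_\alpha A)/\mathrm{soc}(R_\alpha A)$ has all Jordan--H\"older factors $L(v \cdot 0)$ with $l(vs_\alpha) > l(v)$ (so lying in $\mathcal{A}^1_{\underline{1},\underline{s_\alpha}}$ by Proposition~\ref{3.1}). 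Under this description the adjunction unit $u:A \to R_\alpha A$ is injective with image $\mathrm{soc}(R_\alpha A)$, and the counit $c:R_\alpha A \to A$ is the surjection onto the top. Note also that the simplicity hypothesis $A \notin \mathcal{C}^1_{\underline{1},\underline{s_\alpha}}$ in the second part is exactly the condition $l(ws_\alpha) < l(w)$, by Proposition~\ref{3.1}.

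For part~(1), Lemma~\ref{easy} shows that $\mathcal{C}^1_{\underline{1},\underline{s_\alpha}}$ is preserved by $\Phi(\tilde{s}_\alpha)$, and a devissage along the standard filtration reduces the claim to $A = L(w \cdot 0)$ simple. If $l(ws_\alpha) > l(w)$, then Proposition~\ref{3.1} places $A$ in $\mathcal{A}^1_{\underline{1},\underline{s_\alpha}}$, and by Lemma~\ref{weak} $\Phi(\tilde{s}_\alpha) A = A[1]$ also lies in $\mathcal{C}^1_{\underline{1},\underline{s_\alpha}}$, so both sides become zero in the Verdier quotient. If $l(ws_\alpha) < l(w)$, the injectivity of $u$ identifies $\Phi(\tilde{s}_\alpha) A = \mathrm{Cone}(u)$ with $R_\alpha A / A$ concentrated in degree zero. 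This quotient has top $A$ and every other composition factor in $\mathcal{A}^1_{\underline{1},\underline{s_\alpha}}$, so the projection $R_\alpha A / A \to A$ onto the top has kernel lying in $\mathcal{C}^1_{\underline{1},\underline{s_\alpha}}$, giving $\Phi(\tilde{s}_\alpha) A \simeq A$ in $\mathcal{C}/\mathcal{C}^1_{\underline{1},\underline{s_\alpha}}$.

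For part~(2), the analogous distinguished triangle $\Phi(\tilde{s}_\alpha^{-1}) A \to R_\alpha A \xrightarrow{c} A$ coming from the surjective counit identifies $\Phi(\tilde{s}_\alpha^{-1}) A$ with $\ker(c) = \mathrm{rad}(R_\alpha A)$ as an object of the heart. The socle of $\mathrm{rad}(R_\alpha A)$ coincides with that of $R_\alpha A$, which is the single simple $A$; the socle inclusion $A \hookrightarrow \mathrm{rad}(R_\alpha A)$ provides the required map $A \to \Phi(\tilde{s}_\alpha^{-1}) A$, and the uniqueness of the simple submodule is immediate from the simplicity of the socle. The main obstacle is pinning down the structural claims about $R_\alpha A$ above: although well-known in $\mathcal{O}_0$, they must be transferred carefully to the Gelfand--Kirillov subquotient $\mathcal{O}_0^d$, where some composition factors of the middle layer may have strictly smaller GK-dimension and therefore drop out upon passage to $\mathcal{O}_0^d$ (which in fact only simplifies the argument).
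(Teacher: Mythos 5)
Your argument is correct, but it takes a genuinely different route from the paper's. You take as input the full Loewy structure of $R_{\alpha} A$ for $A = L(w\cdot 0)$ with $l(ws_{\alpha}) < l(w)$ --- indecomposable, socle and top both isomorphic to $A$, middle layer built from $L(v\cdot 0)$ with $l(vs_{\alpha}) > l(v)$ --- and then read off both bullets from the radical/socle filtration, identifying $\Phi(\tilde{s}_{\alpha})A$ with $R_{\alpha}A/\mathrm{soc}$ and $\Phi(\tilde{s}_{\alpha}^{-1})A$ with $\mathrm{rad}\,R_{\alpha}A$. The paper deliberately avoids quoting this structure theorem and extracts only what it needs from adjunction: for the first bullet it shows the composite $A \to R_{\alpha}A \to A$ vanishes (otherwise it would be an isomorphism exhibiting $A$ as a direct summand of $R_{\alpha}A$, contradicting the fact that $\mathrm{Hom}(R_{\alpha}A, L)$ is one-dimensional for $L \simeq A$ and zero otherwise), producing a map $\Phi(\tilde{s}_{\alpha})A \to A$ whose cone is killed by $T_{0\rightarrow\mu}$ (via $T_{0\rightarrow\mu}T_{\mu\rightarrow 0}T_{0\rightarrow\mu}A \simeq T_{0\rightarrow\mu}A \oplus T_{0\rightarrow\mu}A$) and hence lies in $\mathcal{C}^1_{\underline{1},\underline{s_{\alpha}}}$; for the second bullet it simply computes $\mathrm{Hom}(B, \Phi(\tilde{s}_{\alpha}^{-1})A) \subseteq \mathrm{Hom}(T_{0\rightarrow\mu}B, T_{0\rightarrow\mu}A) = 0$ for any simple $B \not\simeq A$. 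Your approach buys a more concrete picture, at the cost of importing the self-duality and socle/top description of wall-crossing on simples from the literature, and of having to transfer socle statements across the Serre quotient $\mathcal{O}_0^{\leq d} \rightarrow \mathcal{O}_0^{d}$ --- the step you flag at the end. That transfer does go through (every simple subobject of the image of $\mathrm{rad}\,R_{\alpha}A$ in $\mathcal{O}_0^d$ is the image of an honest subobject $N$ of $\mathrm{rad}\,R_{\alpha}A$, whose socle in $\mathcal{O}_0$ must equal $A$, which has full Gelfand--Kirillov dimension, forcing the image of $N$ to coincide with that of $A$), but note that the paper's Hom computation via adjunction sidesteps this issue entirely, since the translation functors and the braid action already descend to the quotient.
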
 \begin{proof} It is sufficient to prove the first statement when $A$ is a simple module, since the general case would then follows using the argument used in Lemma \ref{strong}; so let us make that assumption. 

Recall that $\Phi(\tilde{s}_{\alpha}) A = \text{Cone}(A \rightarrow  T_{\mu \rightarrow 0} T_{0 \rightarrow \mu} A)$, where $\mu$ satisfies $$\langle \mu + \rho, \check{\alpha} \rangle = 0, \langle \mu + \rho, \check{\beta} \rangle > 0 \text{ if } \beta \in \Delta^+, \beta \neq \alpha$$ Since the functors $T_{\mu \rightarrow 0}$ and $T_{0 \rightarrow \mu}$ are bi-adjoint, we have natural maps $T_{\mu \rightarrow 0} T_{0 \rightarrow \mu} A \rightarrow A$ and $A \rightarrow T_{\mu \rightarrow 0} T_{0 \rightarrow \mu} A$. We claim that the composition of the two maps $A \rightarrow T_{0 \rightarrow \mu} T_{\mu \rightarrow 0} A \rightarrow A$ is zero. Suppose that it isn't; then the composite map is an isomorphism. Further, neither of the maps can be an isomorphism, since the action of the braid group element $\Phi(\tilde{s}_{\alpha})$ is invertible. Thus $M = T_{\mu \rightarrow 0} T_{0 \rightarrow \mu} A$ contains $A$ as a direct summand. But using adjointness, $\text{Hom}(M, L)$ is $1$-dimensional if $L \simeq A$, and is $0$ otherwise; this is a contradiction.

Since the composition of the two maps is $0$, we have a map $\Phi(\tilde{s}_{\alpha}) A \rightarrow A$, and a map $A \rightarrow \Phi(\tilde{s}_{\alpha}^{-1}) A$. From Corollary $7.12$ in \cite{h}, we have: $$ T_{0 \rightarrow \mu} T_{\mu \rightarrow 0} (T_{0 \rightarrow \mu} A) \simeq T_{0 \rightarrow \mu} A \oplus T_{0 \rightarrow \mu} A$$ It follows that $T_{0 \rightarrow \mu}[\Phi(\tilde{s}_{\alpha}) A]  = T_{0 \rightarrow \mu} A$, and hence $T_{0 \rightarrow \mu}[\Phi(\tilde{s}_{\alpha}) A \rightarrow A] = 0$. From Lemma \ref{strong}, we deduce that if $T_{0 \rightarrow \mu} C = 0$ then $C \in \mathcal{C}^1_{\underline{1}, \underline{s_{\alpha}}}$; this concludes the proof of the first statement.

For the second statement, let $B \neq A$ be another simple module. Then the conclusion follows from the following calculation, since in this case, $\Phi(\tilde{s}_{\alpha}^{-1}) A \neq 0$. $$\text{Hom}(B, \Phi(\tilde{s}_{\alpha}^{-1}) A) \subseteq \text{Hom}(B, T_{\mu \rightarrow 0} T_{0 \rightarrow \mu} A) = \text{Hom}(T_{0 \rightarrow \mu} B, T_{\mu \rightarrow 0}A) = 0$$ \end{proof}

\subsection{Proof of Theorem 1}

\begin{lemma} \label{positive} Suppose $p \in \mathbb{R}[x_1, \cdots, x_n]$ is a homogeneous polynomial, such that $p(x_1, x_2, \cdots, x_n) \geq 0$ if $(x_1, \cdots, x_n) \in \mathbb{Z}_{\geq 0}^n$. Then $p(x_1, x_2, \cdots, x_n) \geq 0$ if $(x_1, \cdots, x_n) \in \mathbb{R}_{\geq 0}^n$. \end{lemma}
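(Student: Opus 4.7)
The plan is a standard two-step density argument that exploits the homogeneity hypothesis. First I would upgrade the non-negativity hypothesis from $\mathbb{Z}_{\geq 0}^n$ to $\mathbb{Q}_{\geq 0}^n$, and then pass from $\mathbb{Q}_{\geq 0}^n$ to $\mathbb{R}_{\geq 0}^n$ by continuity.

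For the first step, let $d$ be the degree of $p$, and fix $(x_1, \ldots, x_n) \in \mathbb{Q}_{\geq 0}^n$. Choose a common denominator $b \in \mathbb{Z}_{> 0}$ so that $x_i = a_i/b$ with $a_i \in \mathbb{Z}_{\geq 0}$. Then homogeneity gives
\[
p(x_1, \ldots, x_n) \;=\; p(a_1/b, \ldots, a_n/b) \;=\; b^{-d}\, p(a_1, \ldots, a_n) \;\geq\; 0,
\]
where the last inequality uses the hypothesis that $p$ is non-negative on $\mathbb{Z}_{\geq 0}^n$.

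For the second step, fix $(x_1,\ldots,x_n) \in \mathbb{R}_{\geq 0}^n$ and choose a sequence of points $(x_1^{(k)}, \ldots, x_n^{(k)}) \in \mathbb{Q}_{\geq 0}^n$ converging to it as $k \to \infty$ (this is possible since $\mathbb{Q}_{\geq 0}$ is dense in $\mathbb{R}_{\geq 0}$). By the first step, $p(x_1^{(k)}, \ldots, x_n^{(k)}) \geq 0$ for every $k$, and since $p$ is continuous on $\mathbb{R}^n$, passing to the limit yields $p(x_1, \ldots, x_n) \geq 0$, as required.

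There is no real obstacle here; the only thing to notice is that homogeneity is essential for clearing denominators, but given this the argument is elementary. The lemma is exactly the tool needed in Remark \ref{firstpart} to upgrade the non-negativity of $Z(\lambda')[M']$ from $\lambda' \in \Lambda^+$ (where it equals a leading coefficient, hence a non-negative quantity) to arbitrary $\lambda'$ in the fundamental Weyl chamber.
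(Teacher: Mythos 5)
Your proof is correct, but it takes a different (and arguably cleaner) route than the paper. The paper argues by contradiction: assuming $p<0$ at some point of $\mathbb{R}_{\geq 0}^n$, it finds by continuity an open ball $B$ on which $p<0$, passes by homogeneity to the open cone $\widetilde{B}=\{tx : x\in B,\ t\geq 0\}$ on which $p<0$, and then observes that this cone must contain a point of $\mathbb{Z}_{\geq 0}^n$, contradicting the hypothesis. You instead argue directly: homogeneity clears denominators to give non-negativity on $\mathbb{Q}_{\geq 0}^n$, and density plus continuity finishes. Both arguments use exactly the same two ingredients (homogeneity and continuity of polynomials), so neither is deeper than the other; your version has the minor advantage of sidestepping the step the paper leaves implicit, namely that an open cone around a ray through a point of $\mathbb{R}_{\geq 0}^n$ actually meets the lattice $\mathbb{Z}_{\geq 0}^n$ (true, e.g.\ by rounding coordinates up once the cone is wide enough, but it deserves a word when the base point lies on the boundary of the orthant). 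Your concluding remark about how the lemma is used in Remark \ref{firstpart} matches the paper's application.
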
 \begin{proof} Assume to the contrary that $p(x_1, x_2, \cdots, x_n) < 0$ for some $(x_1, \cdots, x_n) \in \mathbb{R}_{\geq 0}^n$. By continuity, we can pick a small open ball $B$ containing $(x_1, \cdots, x_n)$, such that $p(x_1, x_2, \cdots, x_n) < 0$ for all $(x_1, \cdots, x_n) \in B$. Let $\widetilde{B} = \{ t \cdot x \; | \; x \in B, t \in \mathbb{R}_{\geq 0} \}$; since $p$ is a homogeneous polynomial, it follows that $p(x_1, x_2, \cdots, x_n) < 0$ for all $(x_1, \cdots, x_n) \in \widetilde{B}$. However, it is clear that $\widetilde{B}$ contains points in $\mathbb{Z}_{\geq 0}^n$; this contradicts our initial assumption. \end{proof}

\begin{proof}[Proof of Main Theorem] First let us complete the proof of the first condition in Definition \ref{realvar} (which we started in Remark \ref{firstpart}). From the discussion in that remark, it suffices to prove that $Z(\lambda)[M] \geq 0$, where $\lambda \in \underline{1}$, and $M \in \mathcal{O}^0_d$.  We know that $Z(\lambda)[M] = \text{LC}(T_{0 \rightarrow \lambda} M) \geq 0$ if $\lambda \in \Lambda^+$; from the discussion in the proof of Proposition \ref{3.1}, we know that that this statement also holds if $\lambda$ is an integral weight lying inside the closure of the alcove $\underline{1}$. 

Let $\{ \check{\alpha_1}, \cdots, \check{\alpha_r} \}$ are the simple roots, $\{ \Lambda_1, \cdots, \Lambda_r \}$ the corresponding fundamental weights, and suppose that $w^{-1} \check{\alpha}_i = \sum_{1 \leq j \leq r} c^w_{i, j} \check{\alpha_j}$. By Proposition \ref{taylor}, if $[M] = \sum_{w \in W} a_w [\Delta(w \cdot 0)]$ and $k = |\Delta^+| - d$, then: \begin{align*} Z(\lambda)[M] &= \frac{c}{k!} \sum_{w \in W} a_w \{ w(\rho + \lambda) (\check{\rho}) \}^k \\ &= \frac{c}{k!} \sum_{w \in W} a_w \{ \sum_{1 \leq j \leq r} \langle w(\lambda + \rho), \check{\alpha_j} \rangle \Lambda_j (\check{\rho}) \}^k \\ &= \frac{c}{k!} \sum_{w \in W} a_w \{ \sum_{1 \leq j \leq r} \langle \lambda + \rho, w^{-1} \check{\alpha_j} \rangle \Lambda_j (\check{\rho}) \}^k \\ &= \frac{c}{k!} \sum_{w \in W} a_w \{ \sum_{1 \leq l \leq r} \langle \lambda + \rho, \check{\alpha_l} \rangle (\sum_{1 \leq j \leq r} c^w_{j, l} \Lambda_j(\check{\rho}) ) \}^k \end{align*} So it is clear that $Z(\lambda)[M]$ is a homogeneous polynomial of degree $k$ in the variables $\langle \lambda + \rho, \check{\alpha_1} \rangle, \cdots, \langle \lambda + \rho, \check{\alpha_r} \rangle$. Thus by applying Lemma \ref{positive}, it follows that $Z(\lambda)[M] \geq 0$ when $\lambda$ lies inside the closure of the alcove $\underline{1}$. We will now prove the stronger statement that $Z(\lambda)[M] > 0$ when $\lambda$ lies inside $\underline{1}$. Suppose instead that $Z(\lambda)[M] = 0$.  From Proposition \ref{taylorharmonic}, and Proposition $6.3.25$ of \cite{cg}, we have that, for any $\mu$, $$ Z(\lambda)[M] = \frac{1}{|W|}\sum_{w \in W} Z(\lambda + w \mu)[M] $$ Pick $\mu$ sufficiently small so that $\lambda + w \mu$ lies in the alcove $\underline{1}$ for all $w \in W$; then $Z(\lambda + w \mu)[M] = 0$ for all $w \in W$. Since $\mu$ was arbitrary, this means that $Z(\lambda)[M] = 0$ on some neighbourhood of $\lambda$. Hence $Z(\lambda)[M] = 0$, contradicting the fact that $M$ was a non-zero object. This completes the proof of the first condition from Definition \ref{realvar}. 

Now we will check the second condition in Definition \ref{realvar}. In keeping with the notation used there, recall that $$\mathcal{C}_{\underline{w}, \underline{w'}}^n = \{ C \in \mathcal{C} \; | \; H^i_{\tau(\underline{w})}(C) \in  \mathcal{A}_{\underline{w}, \underline{w'}}^n \; \forall \; i \; \in \mathbb{Z} \}$$

First we need to show that the filtration $\{ 0 \} = \mathcal{C}_{\underline{w}, \underline{w'}}^2 \subset \mathcal{C}_{\underline{w}, \underline{w'}}^1 \subset \mathcal{C}$ is stable under the truncation functors for the t-structure $\tau(\underline{w'})$. Using Lemma \ref{easy}, we may reduce to the case where $w' = 1, w = s_{\alpha}$ (where $\alpha$ is a simple root). Thus we have: \begin{align*} H_{\tau(\underline{s_{\alpha}})}^n (C) &=  \Phi(\widetilde{s_{\alpha}}) H^n (\Phi(\widetilde{s_{\alpha}}^{-1}) C) \\ \mathcal{C}_{\underline{s_{\alpha}}, \underline{1}}^1 &= \{ C \in \mathcal{C} \; | \; H^i_{\tau(\underline{s_{\alpha}})}(C) \in  \mathcal{A}_{\underline{s_{\alpha}}, \underline{1}}^1 \} \\ &= \{ C \in \mathcal{C} \; | \; \Phi(\widetilde{s_{\alpha}}) H^n (\Phi(\widetilde{s_{\alpha}}^{-1}) C) \in \Phi(\widetilde{s_{\alpha}}) \mathcal{A}_{\underline{1}, \underline{s_{\alpha}}}^1 \} \\  &= \{ C \in \mathcal{C} \; | \; H^n (\Phi(\widetilde{s_{\alpha}}^{-1}) C) \in \mathcal{A}_{\underline{1}, \underline{s_{\alpha}}}^1 \} \end{align*}

We wish to show that if $C \in \mathcal{C}_{\underline{s_{\alpha}}, \underline{1}}^1$, then $\tau_{\leq i} C, \tau_{\geq i} C \in \mathcal{C}_{\underline{s_{\alpha}}, \underline{1}}^1$ for $i \in \mathbb{Z}$. 

Now $A = \Phi(\widetilde{s}_{\alpha}^{-1}) C$ satisfies the hypothesis of Lemma \ref{strong}, so: \begin{align*} \Phi(\widetilde{s}_{\alpha})(A) &= A[1] \Rightarrow \Phi(\widetilde{s}_{\alpha}^{-1}) C = C[-1] \\ & \therefore H^j(C) \in  \mathcal{A}^1_{\underline{1}, \underline{s}_{\alpha}} \forall i \in \mathbb{Z} \\  H^j(\tau_{\geq i} C) &= \begin{cases}  H^j(C) &\mbox{if } j \geq i \\ 0 &\mbox{if } j < i\end{cases} , H^j(\tau_{\leq i} C) = \begin{cases}  H^j(C) &\mbox{if } j \leq i \\ 0 &\mbox{if } j > i\end{cases}  \\ \Rightarrow H^j(\tau_{\geq i} C), H^j(\tau_{\leq i} C) &\in  \mathcal{A}^1_{\underline{1}, \underline{s}_{\alpha}} \forall i \in \mathbb{Z} \end{align*}

Applying Lemma \ref{strong} again, we get that: \begin{align*} \Phi(\tilde{s}_{\alpha}) (\tau_{\geq i} C) = \tau_{\geq i} C[1], \Phi(\tilde{s}_{\alpha}) (\tau_{\leq i} C) = \tau_{\leq i} C[1] \\ \Phi(\tilde{s}_{\alpha}^{-1}) (\tau_{\geq i} C) = \tau_{\geq i} C[-1], \Phi(\tilde{s}_{\alpha}^{-1}) (\tau_{\leq i} C) = \tau_{\leq i} C[-1] \\ \Rightarrow H^i( \Phi(\tilde{s}_{\alpha}^{-1}) (\tau_{\geq i} C)), H^i( \Phi(\tilde{s}_{\alpha}^{-1}) (\tau_{\leq i} C)) \in \mathcal{A}^1_{\underline{1}, \underline{s}_{\alpha}} \end{align*} It follows that if $C \in \mathcal{C}_{\underline{s_{\alpha}}, \underline{1}}^1$, then $\tau_{\leq i} C, \tau_{\geq i} C \in \mathcal{C}_{\underline{s_{\alpha}}, \underline{1}}^1$ for $i \in \mathbb{Z}$. 

We also need to show that the two t-structures on the quotient $\mathcal{C}_{\underline{s_{\alpha}}, \underline{1}}^n/\mathcal{C}_{\underline{s_{\alpha}}, \underline{1}}^{n+1}$ induced by $\tau(\underline{1})$ and $\tau(\underline{s_{\alpha}})$ differ by a shift of $[n]$, for $n = 0, 1$. For $n=0$, this follows from Lemma \ref{szero}; for $n=1$, this follows directly follow Lemma \ref{strong}. \end{proof}

\subsection{Proof of Theorem 2} \label{bridge}

We will apply the above result (which gives a construction of ``real variation of stability conditions'') to explicitly describe a sub-manifold in the space of Bridgeland stability conditions on $\mathcal{C} = D^b(\mathcal{O}_0^d)$. The arguments used in this section are identical to those  used in Section $4.2$ of \cite{abm}; all of the facts used there about exotic $t$-structures on $D^b(\text{Coh}_{\mathcal{B}_e}(U_e))$ have analogues in our set-up, so the same proof is applicable here. We describe the proof in some detail below; note, however, that the notation we use is different (for instance, $\mathfrak{h}^*_{\text{reg}}$ and $ \widetilde{V_{\text{reg}}}$ mean different things in their setup).

Recall that the Weyl group $W$ acts naturally on $\mathfrak{h}^* = \Lambda \otimes_{\mathbb{Z}} \mathbb{C}$; denote by $\mathfrak{h}^*_{\text{reg}}$ the union of the free orbits of $W$ on $\mathfrak{h}^*$. Let us fix a universal covering space  $\widetilde{\mathfrak{h}^*_{\text{reg}}}$ of $\mathfrak{h}^*_{\text{reg}}$. Also, let $\mathfrak{h}^*_{\mathbb{R}} = \Lambda \otimes_{\mathbb{Z}} \mathbb{R}$ be the real Cartan. Denote by $V^{\text{reg}}$ the following subspace of $\mathfrak{h}^*$ (here we identify $\mathfrak{h}^*_{\mathbb{R}} \times \mathfrak{h}^*_{\mathbb{R}} \simeq \mathfrak{h}^*$): $$ V^{\text{reg}} = \{ (\lambda, \mu) \in \mathfrak{h}^*_{\mathbb{R}} \times \mathfrak{h}^*_{\mathbb{R}} | (\lambda \in (\mathfrak{h}^*_{\mathbb{R}})^{\text{reg}}) \bigvee (\lambda \in \overline{A}, \mu \in A \text{ for some } A \in \textbf{Alc}) \} $$ 
Denote by $\widetilde{V_{\text{reg}}}$ the pre-image of $V_{\text{reg}}$ in $\widetilde{\mathfrak{h}^*_{\text{reg}}}$. Let us define the map $\textbf{Z}: \mathfrak{h}^* \rightarrow K^0(\mathcal{C})^*$ as follows; one should think of it as a complexification of the map $Z: \mathfrak{h}^*_{\mathbb{R}} \rightarrow K^0(\mathcal{C})_{\mathbb{R}}^*$. Again, identify $\mathfrak{h}^*_{\mathbb{R}} \times \mathfrak{h}^*_{\mathbb{R}} \simeq \mathfrak{h}^*$:
\begin{align*} \textbf{Z}(\lambda,\mu)[M] = Z(\lambda)[M] + \sqrt{-1} Z(\mu)[M] \end{align*}

\begin{theorem} There exists a unique map (of manifolds) $\imath$, from $\widetilde{V_{\text{reg}}}$ to the space of locally finite Bridgeland stability conditions on $\mathcal{C}$, such that: \begin{itemize} \item We have the commuting square $$\overline{\pi} \circ \imath= \sqrt{-1} \textbf{Z} \circ \pi$$ where $\overline{\pi}: \text{Stab}(\mathcal{C}) \rightarrow K^0(\mathcal{C})^*$, and $\pi: \widetilde{\mathfrak{h}}^*_{\text{reg}} \rightarrow \mathfrak{h}^*_{\text{reg}}$, are the natural projection maps. \item For any alcove $\underline{w}$, and any point $z \in \underline{w}$, the underlying t-structure of the stability condition $\imath(z)$ is $\tau(\underline{w})$. \item The map $\imath$ is compatible with the action of $\mathbb{B}_W$, which acts on the source by deck transformations, and on the target via the action on $\mathcal{C}$.  \end{itemize} \end{theorem}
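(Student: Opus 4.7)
My plan is to follow the blueprint of Section $4.2$ of \cite{abm}: use Theorem \ref{main} (the real variation structure proven just above) together with the formal machinery developed there to promote the real variation into a family of genuine locally finite Bridgeland stability conditions parametrized by $\widetilde{V_{\text{reg}}}$. Concretely, to a point $z \in \widetilde{V_{\text{reg}}}$ projecting to $(\lambda,\mu) \in V^{\text{reg}}$ with $\mu$ in the open alcove $\underline{w}$, I would assign the stability condition whose heart is $\mathcal{A}_{\underline{w}}$ and whose central charge is $\sqrt{-1}\textbf{Z}(\lambda,\mu)$; the choice of lift $z$ over $(\lambda,\mu)$ in the universal cover fixes a consistent branch of the slicing $\{\mathcal{P}(\phi)\}_{\phi \in \mathbb{R}}$, so that the target is the locally finite stability manifold rather than its quotient by $\mathbb{Z}[1]$.

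To verify this defines a bona fide stability condition, I would first check positivity: for a nonzero $M \in \mathcal{A}_{\underline{w}}$,
\[
\sqrt{-1}\,\textbf{Z}(\lambda,\mu)[M] \;=\; -Z(\mu)[M] \;+\; \sqrt{-1}\,Z(\lambda)[M].
\]
The first axiom of real variation yields $Z(\mu)[M] > 0$, and the proof of Theorem \ref{main} shows $Z(\lambda)[M] \geq 0$ whenever $\lambda \in \overline{\underline{w}}$, so the central charge lies in the semi-open upper half plane required by Bridgeland. Local finiteness and existence of Harder-Narasimhan filtrations follow from the fact that $\mathcal{O}_0^d$ has finite length.

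The main technical content, and the step I expect to be the principal obstacle, is the gluing across walls. When $(\lambda,\mu)$ crosses a wall $H$ separating $\underline{w}$ from an adjacent alcove $\underline{w'}$, the assignment must descend continuously from both sides. For objects $M$ with $Z(\lambda)[M] \neq 0$ on $H$, the heart is locally constant and continuity of the central charge is automatic. For $M \in \mathcal{A}^1_{\underline{w},\underline{w'}}$ (where $Z(\lambda)[M]$ vanishes on $H$), Proposition \ref{3.2} tells us the filtration is two-step, and the second axiom of real variation gives that $\tau(\underline{w})$ and $\tau(\underline{w'})$ differ by $[1]$ on this subcategory; since $Z(\mu)[M]$ simultaneously changes sign as $\mu$ crosses $H$, the two effects cancel via the factor $e^{i\pi}$ to yield the same point of $\text{Stab}(\mathcal{C})$. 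Promoting the shift from the heart to arbitrary complexes uses Lemma \ref{strong}. Verifying that these patchings interact correctly with the Bridgeland topology and with the chosen lifts on the universal cover is the delicate bookkeeping step, but it reduces line-for-line to the argument in \cite{abm}.

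Uniqueness follows because any candidate map satisfying the commuting square and the t-structure constraint is pinned down on every open alcove (a stability condition is determined by its heart and central charge), and $\widetilde{V_{\text{reg}}}$ is connected. For $\mathbb{B}_W$-equivariance, the deck action of $\tilde{w}$ sends a base point of $\widetilde{V_{\text{reg}}}$ to a lift of $w^{-1}(\lambda,\mu)$, so equivariance of $\imath$ reduces to the identity $\textbf{Z}(w^{-1}(\lambda,\mu)) = \textbf{Z}(\lambda,\mu)\circ \Phi(\tilde{w})$, which is the complexification of the second clause of Proposition \ref{Z}, together with the defining relation $\tau(\underline{w}) = \Phi(\tilde{w})\tau(\underline{1})$ at the level of t-structures.
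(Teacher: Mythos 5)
Your proposal is correct and follows essentially the same route as the paper: define $\imath$ on a fundamental domain via Bridgeland's correspondence between (bounded t-structure, compatible central charge) pairs and stability conditions, extract positivity from the first axiom of the real variation, handle continuity at walls via the two-step filtration, the shift by $[1]$, and the sign change of the central charge (with Lemma \ref{szero} supplying the needed extension/stability data in the non-vanishing case), and reduce the remaining bookkeeping to Section $4.2$ of \cite{abm} — exactly as the paper does.
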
  \begin{proof} One readily verifies that the following set is a fundamental domain for the action of $W$ on $V^{\text{reg}}$ (here $\overline{\underline{1}}$ denotes the closure of the alcove $\underline{1}$): $$ S = \{ (\lambda, \mu) \in V^{\text{reg}} | (\lambda \in \underline{1}) \bigvee (\lambda \in \overline{\underline{1}}, \mu \in \underline{1}) \} $$ Thus a point in $\widetilde{V^{\text{reg}}}$ can be represented by a pair $(b,x)$, where $x \in S$ and $b \in \mathbb{B}_W$ represents a homotopy class of a path from $\underline{1}$ to some alcove $\underline{w}$; the projection to $V^{\text{reg}}$ is given by $(b, x) \rightarrow \overline{b}(x)$, where $\overline{b} \in W$ corresponds to $b \in \mathbb{B}_{aff}$. Now define the map $\imath$ as follows: $$ \imath(b, x) = \mathfrak{S}(\Phi(b)(\tau_{\underline{1}}), \sqrt{-1} \textbf{Z}(\overline{b}(x))) $$ Here $\mathfrak{S}$ refers to the map constructed by Bridgeland in Propositon $5.3$ of \cite{bridgeland}; there he shows that giving a stability condition is equivalent to giving a t-structure and a central charge map (in our case, $\Phi(b) \tau_{\underline{1}}$, and $\sqrt{-1} \textbf{Z}(\overline{b}(x))$ respectively). It is easy to see the map lands in the space of locally finite stability conditions, and that it is $\mathbb{B}_W$-equivariant. 

Let us now check continuity of this map. It suffices to check continuity at a point $(\lambda, \mu) \in S$ lying on the boundary of the region $\underline{1}$; here $\lambda \in F, \mu \in \underline{1}$, where $F$ is a face on the boundary of the two alcoves $\underline{1}$ and $\underline{s_{\alpha}}$ (for some simple reflection $s_{\alpha}$). In Theorem $1.2$ of \cite{bridgeland}, Bridgeland shows that there exists a neighbourhood of the point $z(\lambda, \mu) := \imath(1, (\lambda, \mu))$ in $\text{Stab}(\mathcal{C})$ which maps isomorphically to a neighbourhood of $\sqrt{-1} \textbf{Z}(\lambda, \mu)$ in $K^0(\mathcal{C})^*$. Thus to check continuity, it suffices to see that for a small neighbourhood $U$ of $\sqrt{-1} \textbf{Z}(\lambda, \mu) \in K^0(\mathcal{C})^*$, and a point $\tilde{z} \in \text{Im}(\imath) \subset \text{Stab}(\mathcal{C})$ mapping to $z \in \sqrt{-1} \textbf{Z}(s_{\alpha}(S)) \cap U$, the t-structure underlying $\tilde{z}$ is $\Phi(\tilde{s}_{\alpha}^{-1})(\tau_{\underline{1}})$. 

To prove this, it suffices to see that $\Phi(\tilde{s}_{\alpha}^{-1}) M$ lies in the heart of the t-structure underlying $\tilde{z}$, for any simple object $M \in \mathcal{O}_0^d$. We have two cases to consider: either $f_M$ has a zero of order $1$ on the $F$, or not. In the first case, we have that $\Phi(\tilde{s}_{\alpha}^{-1}) M = M[-1]$ using Proposition \ref{3.1}. Using Lemma 2 from \cite{abm}, since $M$ is stable with respect to the stability condition $z (\lambda, \mu)$, we can choose a sufficiently small neighbourhood $U$ such that $M$ is stable with respect to $\sigma'$ for any $\sigma' \in U$. Thus $M$ is stable with respect to $\tilde{z}$. Further, since $Z(\mu)[M] > 0, Z(\lambda)[M] = 0$, we have that: \begin{align*} \langle \overline{\pi} z(\lambda, \mu), [M] \rangle &= \sqrt{-1}(Z(\lambda)[M] + \sqrt{-1} Z(\mu)[M]) \in \mathbb{R}_{<0}  \end{align*} However, $\langle z, [M] \rangle$ lies in the lower half plane: indeed, when $\lambda' \in \underline{1}, (\lambda', \mu') \in S$, $\langle \imath(\tilde{s}_{\alpha}, (\lambda', \mu')), [M] \rangle$ lies in the lower half plane, since $Z(\lambda)[M] > 0$ and $[\tilde{s}_{\alpha}^{-1} (M)] = - [M]$. This now implies that the phase of $M$ lies in $(1,2)$, so that $M[-1]$ lies in the heart of the $t$-structure underlying $\tilde{z}$. 

Now suppose $f_M$ does not have a zero on $F$; then $\tilde{M} := \tilde{s}_{\alpha}^{-1} M$ lies in the heart of $\tau_{\underline{1}}$. Recall from Lemma \ref{szero} that we have a short exact sequence $0 \rightarrow M \rightarrow \tilde{M} \rightarrow M' \rightarrow 0$, with $d_{M'}$ having a zero on $F$, and $M$ being the only simple sub-module of $\tilde{M}$. Further, $\langle \overline{\pi} z(\lambda, \mu), [M] \rangle$ lies in the open upper half-plane, since $Z(\lambda)[M] > 0$ in this case using Proposition \ref{3.1}. Meanwhile, $\langle \overline{\pi} z(\lambda, \mu), [M'] \rangle \in \mathbb{R}_{<0}$ (from the argument used to prove the analogous statement in the last paragraph). For any non-zero submodule $N$ of $\tilde{M}$, $\tilde{M}/N$ is a submodule of $M'$. Hence, for some $s > 0$, $$\overline{\pi} z(\lambda, \mu) [N] =  \overline{\pi} z(\lambda, \mu) [\tilde{M}] + s $$ Thus any such $N$ has phase smaller than $\tilde{M}$; it follows that $\tilde{M}$ is stable and has phase in $(0, 1)$. \end{proof}

\section{Further directions}

\subsection{Leading coefficient polynomials} Recall that for Proposition \ref{Z}, we used the quantity $LC(M)$ instead of $\text{\underline{LC}}(M)$. However, the quantity $\text{\underline{LC}}(M)$ is slightly easier to define than $\text{LC}(M)$, and is in some sense more natural. We expect that Proposition \ref{Z} (and consequently, all the other results obtained in this paper) will continue to hold with $\text{\underline{LC}}(M)$ instead of $\text{LC}(M)$. This would be a consequence of Conjecture \ref{conj}; it would be interesting to understand this better. One approach is to use Beilinson-Bernstein localization theory, and to consider the singular support of the $D$-modules which correspond to these irreducible objects in category $\mathcal{O}$. Alternatively, it is conceivable that this conjecture would be amenable to a more elementary approach, perhaps by invoking general facts about Hilbert polynomials for graded polynomial rings.

\subsection{Generalization to parabolic and singular category $\mathcal{O}$} In this paper, we deal primarily with the regular block of category $\mathcal{O}$. We expect that these results are valid, more generally, for a (possibly singular) block of parabolic category $\mathcal{O}$; this should be a straightforward application of the techniques developed here. The special case of a maximal parabolic in $\mathfrak{sl}_n$, with two blocks of sizes $n-1$ and $1$ is of special interest, since in this case parabolic category $\mathcal{O}$ can be described explicitly as modules over a certain path algebra (which closely resembles the zig-zag algebra, and the pre-projective algebra in type $A$). We expect this example to be closely related to that studied by Bridgeland in \cite{bridgeland2}, involving stability conditions on resolutions of Kleinian singularities (which can be re-formulated as module categories over pre-projective algebras).


\subsection{A characteristic $p$ analogue} Here we sketch a conjectural characteristic $p$ analogue of the present work; it would be interesting to make this more precise, and check that these statements hold. Earlier we stated that the construction from \cite{abm} can be viewed, loosely, as a characteristic $p$ analogue of our construction; what we describe now is a more direct characteristic $p$ analogue of our construction, for $p$ sufficiently large. Consider the category $\mathcal{C}_{\lambda, e} := \text{Mod}^{\text{fg}}_{\lambda, e}(U \mathfrak{g})$ of modules over $U\mathfrak{g}$ with generalized central character $(\lambda, e)$, where $\lambda \in \mathfrak{h}^*$ specifies the action of the Harish-Chandra center, and the nilpotent $e \in \mathfrak{g}^*$ specifies the action of the Frobenius center.  Given a module $M \in \text{Mod}^{\text{fg}}_{\lambda, e}(U \mathfrak{g})$, let $\text{dim}(M)$ be its dimension. It is known that the category $\mathcal{C}_{\lambda, e}$ does not change as $p$ varies (for $p$ sufficiently large), so the module $M$ can be defined in characteristic $p$ for all primes $p$ sufficiently large; further, $\text{dim}(M)$ is a polynomial in $p$. Let us denote its leading coefficient by $\text{LC}(M)$, and its degree by $d(M)$; further, denote by $\mathcal{C}_{\lambda, e}^{\leq d}$ the sub-category consisting of all modules $M$ for which $d(M) \leq d$, and let $\mathcal{C}_{\lambda, e}^d = \mathcal{C}_{\lambda, e}^{\leq d} / \mathcal{C}_{\lambda, e}^{\leq d-1}$. Then the action of the affine braid group $\mathbb{B}_{aff}$ on $D^b(\mathcal{C}_{\lambda, e})$ induces an action on $D^b(\mathcal{C}_{\lambda, e}^d)$. Let us define the central charge map analogously: $$Z(\lambda)[M] = \text{LC}(T_{0 \rightarrow \lambda} M)$$ As in \cite{abm}, let $V = \mathfrak{h}^*$, and let $\Sigma$ consist of all affine co-root hyperplanes, so that the set of alcoves are in bijection with the affine Weyl group $W_{\text{aff}}$. For each $w \in W_{aff}$, define the t-structure $\tau(w)$ analogously (using the action of the automorphism corresponding to a lift $\tilde{w} \in \mathbb{B}_{aff}$ of $w$). We expect that this datum satisfies the conditions defining a real variation of stability conditions, and that one can use this to describe a sub-manifold in the space of Bridgeland stability conditions (following Section $4.2$ of \cite{abm}). The same method of proof employed here should be applicable for the most part; however, for Proposition \ref{Z}, one needs to find an alternative approach. Alternatively, one may attempt to deduce these results directly from the main result of \cite{abm}. Their result is, loosely speaking, stronger, since they doesn't use sub-quotients; however, it isn't immediately clear whether or not this follow as a consequence.

\subsection{Piecing together the action on the sub-quotients} Once the constructions in the last paragraph have been formalized, we expect that the construction in \cite{abm} can be obtained, in a very loose sense, by piecing them together. Here, we have constructed some examples of stability conditions (and real variations thereof) using Gelfand-Kirillov sub-quotients of category $\mathcal{O}$. Analogously to the characteristic $p$ case discussed in \cite{abm}, it would be interesting to piece these together, and construct an example of (real variations of) stability conditions using the entirety of category $\mathcal{O}$, instead of its sub-quotients. The main difficulty is that in positive characteristic, one can look at the dimension of a module, which is a finer invariant than the leading coefficient $\text{LC}(M)$ described above; however, for a module lying in category $\mathcal{O}$, it is not clear what one should use as a substitute for the dimension. 

\bibliographystyle{plain}

\end{document}